\newcommand{\Div}{\divergence}
\newcommand{\eps}{\varepsilon}
\newcommand{\R}{\mathbb R}
\newcommand{\N}{\mathbb N}
\newcommand{\Q}{\mathcal Q}
\newcommand{\LL}{\mathcal L}
\newcommand{\dd}{\mathrm{d}}
\newcommand{\dx}{\,\mathrm{d}x}
\newcommand{\dt}{\,\mathrm{d}t}
\newcommand{\dxt}{\,\mathrm{d}(x,t)}
\newcommand{\ds}{\,\mathrm{d}\sigma}
\newcommand{\dxs}{\,\mathrm{d}(x,\sigma)}
\newcommand{\A}{\mathcal{A}}
\newcommand{\B}{\mathcal{B}}
\newcommand{\dom}{\operatorname{dom}}
\DeclareMathOperator{\Bog}{Bog}
\newcommand{\correct}[1]{{\color{red}#1}}
\title{Weak Solutions for a Non-Newtonian Diffuse Interface Model with Different Densities}
\author{
Helmut Abels\\
 {\it Department of Mathematics, University of Regensburg}\\ {\it  Universit\"atsstra\ss e 31, 93053  Regensburg, Germany}
\\[1ex]
  Dominic Breit\\
 {\it Department of Mathematics, Heriot-Watt University}\\ {\it Edinburgh EH14 4AS, UK}\\
% {\it Mathematical Institute, LMU Munich}\\ {\it Theresienstra\ss e 39, 80333 Munich, Germany}
}
\begin{document}
\maketitle

\date{}

\begin{abstract}
We consider weak solutions for a diffuse interface model of two non-Newtonian viscous, incompressible fluids of power-law type in the case of different densities in a bounded, sufficiently smooth domain. This leads to a coupled system of a nonhomogenouos generalized Navier-Stokes system and a Cahn-Hilliard equation. For the Cahn-Hilliard part a smooth free energy density and a constant, positive mobility is assumed.  Using the $L^\infty$-truncation method we prove existence of weak solutions for a power-law exponent $p>\frac{2d+2}{d+2}$, $d=2,3$. 
\end{abstract}

  %\begin{keyword}

 % MSC 2010: 46E30, 46E35, 35J57, 76D03, 65N30
  %\end{keyword}

\par\noindent {\it Mathematics Subject
Classifications:} Primary: 35Q35; %% Other equations arising in fluid mechanics
Secondary:
35Q30, %% Stokes and Navier-Stokes eq. 
35R35,
76D05, %% Incompressible viscous fluids: Navier-Stokes equations
76D45. %% Incompressible viscous fluids: Capillarity (surface tension)

\par\noindent {\it Keywords:} Two-phase flow,  diffuse interface model, non-Newtonian fluids,
 Cahn-Hilliard equation, $L^\infty$-truncation

\section{Introduction}

We consider a two-phase flow of two incompressible non-Newtonian fluids in a bounded domain. In the case of classical sharp interface models the interface, separating the two fluids, is modeled as a (sufficiently smooth) hypersurface. But these models do not allow to describe flows beyond the occurrence of topological singularities, e.g. when droplets collide or pinch off. In the following we will consider a diffuse interface model for such two-phase flows, where the fluids are assumed to be partly miscible and the sharp interface is replaced by a thin interfacial region, where a scalar order parameter $\varphi$ changes smoothly, but rapidely between two distinguished values, e.g. $\pm 1$, that describe the separate phases.
More precisely we consider the following system, which couples a nonhomogeneous generalized Navier-Stokes system and a Cahn-Hilliard equation:
\begin{alignat}{2}\label{eq:NSCH1}
      \varrho\partial_t \bfv+(\varrho\bfv+\bfJ))\cdot \nabla \bfv-\Div \bfS(\varphi,\bfD\bfv)+\nabla \pi&=-\varepsilon_0\Div \big(\nabla \varphi\otimes\nabla \varphi\big),\\\label{eq:NSCH2}
      \Div \bfv&=0,\\\label{eq:NSCH3}
      \partial_t \varphi+\bfv\cdot\nabla \varphi&=m\Delta \mu,\\\label{eq:NSCH4}
      \mu&=\varepsilon^{-1}f'(\varphi)-\varepsilon_0\Delta \varphi
\end{alignat}
in a space-time cylinder $\Q_T=\Omega\times (0,T)$, where $\bfJ= - \frac{\partial \rho}{\partial \varphi} \nabla \mu$ and $\Omega\subseteq \R^d$, $d=2,3$, is a bounded domain with $C^4$-boundary together with suitable boundary and initial conditions specified below.
 Here $\bfv\colon \Q_T\to \R^d$ is the velocity of the mixture of the two non-Newtonian fluids, which is defined as the volume average of the individual fluid velocities, cf.~\cite{AbelsGarckeGruen2}, $\pi\colon\Q_T\to \R$ is its pressure, $\varphi \colon \Q_T\to \R$ is an order parameter related to the volume fractions of the fluids (e.g. $\varphi$ is the difference of the volume fractions of both fluids), $f\colon \R\to \R$ is the homogeneous free energy density of the fluid mixture, $\rho=\rho (\varphi)$ is the density of the mixture, depending explicitely on $\varphi$, and $\mu\colon \Q_T\to \R$ is the chemical potential of the mixture. Moreover, $\bfS(\varphi,\bfD\bfv)$ is the viscous part of the stress tensor due to friction in the fluid mixture, which will be specified below. Finally, $\eps_0>0$ is a constant related to the thickness of the diffuse interface and $m>0$ is a mobility coefficient, which is assumed to be constant as well. For simplicity we will assume $\eps_0=m=1$. But all results in the following remain valid for general $\eps_0,m>0$.

The model above is a non-Newtonian variant of the model derived in \cite{AbelsGarckeGruen2}, where the constitutive assumption $\bfS(\varphi, \bfD\bfv)= 2\nu(\varphi)\bfD\bfv$ is made. A prototypical example for the following is 
\begin{equation}\label{eq:0209}
  \bfS(\varphi, \bfD\bfv)= 2\nu(\varphi)|\bfD \bfv|^{p-2}\bfD \bfv\qquad \text{for some }p> 1
\end{equation}
and a suitable positive $\nu \colon \R\to \R$. Such power-law models for non-Newtonian fluids
are very popular among rheologists \cite{AstaritaMarucci,BirdArmstrongHassager}.
The value for $p$ can be specified by physical experiments. An extensive list for specific values for different fluids can be found in \cite{BirdArmstrongHassager}. Apparently many interesting $p$-values lie in the interval $[\frac 32,2]$. The mathematical discussion of power-law models for non-Newtonian fluids started in the late sixties with the work of Lions and Ladyshenskaya (see \cite{La1,La2,Ladyzenskaja2} and \cite{Lions}). A first systematic study can be found in \cite{MNRR}.

In the case of \eqref{eq:0209} the derivation of \eqref{eq:NSCH1}--\eqref{eq:NSCH4} is precisely the same as in \cite[Section~2]{AbelsGarckeGruen2}. One just has to choose $\bfS(\varphi, \bfD\bfv)$ as above, which guarantees the validity of the local dissipation inequality. Moreover, let us note that in the derivation of the model in \cite{AbelsGarckeGruen2} it is assumed that
\begin{equation}\label{eq:rho}
  \varrho(\varphi)= \frac{\tilde{\varrho}_2-\tilde{\varrho}_1}2\varphi + \frac{\tilde{\varrho}_2+\tilde{\varrho}_1}2,
\end{equation}
where $\tilde{\varrho}_1,\tilde{\varrho}_2$ are the specific densities of the two (separate) fluids. Then $\frac{\partial \varrho}{\partial \varphi}(\varphi)$ is constant and $\varrho(\varphi)$ solves the continuity equation
\begin{equation}\label{eq:contEq}
  \partial_t \varrho(\varphi) + \Div (\varrho(\varphi) \bfv + \bfJ)=0,
\end{equation}
where $\bfJ$ is a flux of the fluid density due to diffusion relative to the flux $\rho \bfv$ caused by convection. Here $\varphi= \varphi_2-\varphi_1$ is the difference of the volume fractions $\varphi_1,\varphi_2$ of the fluids. Physically $\varphi$ should only attain values in $[-1,1]$, which guarantees that $\varrho(\varphi)$ is positive. Since the density is a function of the order parameter, the study share certain similar features with the analysis of quasi-compressible fluids, see for instance \cite{Fe} and the references therein. We note that this diffuse interface model corresponds to a two-phase flow with a sharp interface $\Gamma(t)$ separating two immiscible incompressible non-Newtonian fluids. Here the surface $\Gamma(t)$ gives rise to a surface energy given by a constant surface tension coefficient times the area of $\Gamma(t)$. Note that no variable surface tension or curvature effects are taken into account in the surface energy. We refer to \cite[Section~4]{AbelsGarckeGruen2} for the relation to sharp interface models in the Newtonian case, which can be modified to the present situation.\\
In the following we want to construct weak solutions of the system above for arbitrary large times $0<T<\infty$. But, since a comparison principle for the fourth order Cahn-Hilliard system \eqref{eq:NSCH3}--\eqref{eq:NSCH4} is unknown and we will assume that $f\colon\R\to \R$ is a suitable smooth function, we are not able to prove that $\varphi$ attains only values in $[-1,1]$. Let us note that in the case of Newtonian fluids (i.e., $p=2$) the existence of weak solutions of the system above for large times was proven in \cite{AbelsDepnerGarcke,AbelsDepnerGarckeDegMob}. In these contributions either $f\colon [-1,1]\to\R$ is assumed to be singular at $\partial[-1,1]$ or the mobility $m$ is a degenerate function of $\varphi$. In both cases one obtains that $\varphi \in [-1,1]$ almost everywhere and one can assume that \eqref{eq:rho} holds true. Since in our setting we are not able to show $\varphi\in [-1,1]$ and $\rho(\varphi)$ in \eqref{eq:rho} becomes negative outside of $[-1,1]$ unless $\tilde{\rho}_1=\tilde{\rho}_2$, we cannot assume \eqref{eq:rho} for all values of $\varphi$ and $\rho(\varphi)$ has to be modified outside of $[-1,1]$ such that it stays strictly positive. But then $\frac{\partial \rho}{\partial \varphi}$ is no longer constant for all values of $\varphi$ and $\varrho$ solves only
\begin{equation}\label{eq:contEq'}
  \partial_t \varrho + \Div (\varrho \bfv + \bfJ)=R, \quad \text{where}\quad R= -\nabla \frac{\partial \varrho}{\partial \varphi}\cdot \nabla \mu,
\end{equation}
instead of \eqref{eq:contEq}. Here $R$ is an additional source term, which vanishes in the interior of  $\{\varphi \in [-1,1]\}$ if \eqref{eq:rho} holds for all $\varphi\in [-1,1]$. In order to obtain a local dissipation inequality and global energy estimate the equation of linear momentum \eqref{eq:NSCH1} has to be modified to 
\begin{equation}\label{eq:NSCH1'}
        \varrho\partial_t \bfv+(\varrho\bfv+\bfJ)\cdot \nabla \bfv+ R\frac{\bfv}2-\Div \bfS(\varphi,\bfD\bfv)+\nabla \pi=-\varepsilon_0\Div \big(\nabla \varphi\otimes\nabla \varphi\big).
\end{equation} %%% Referenz daf�r? - Siehe Alt/Witterstein?
This modification guarantees that
\begin{equation*}
  \partial_t \frac{\varrho |\bfv|^2}2 + \Div \left((\varrho \bfv+\bfJ)  \frac{ |\bfv|^2}2\right)- \Div (\bfS(\varphi,\bfD\bfv) \bfv - \pi \bfv)- \bfS(\varphi,\bfD\bfv) :\bfD\bfv = -\eps_0 \Div (\nabla \varphi\otimes \nabla \varphi) \cdot \bfv
\end{equation*}
and the (global) energy identity
\begin{equation*}
  \frac{\dd}{\dt} \left(\int_\Omega\frac{\varrho |\bfv|^2}2\dx + \int_\Omega \left(\frac{\eps_0 |\nabla \varphi|^2}2 + \frac{f(\varphi)}\eps_0 \right)\dx \right) = - \int_\Omega  \bfS(\varphi,\bfD\bfv) :\bfD\bfv\dx - \int_\Omega m|\nabla \mu|^2 \dx
\end{equation*}
for every sufficiently smooth solution of
\begin{align}\label{eq:NSCH}
  \left\{\begin{array}{rl}      
      \varrho\partial_t \bfv+(\varrho\bfv+\bfJ))\cdot \nabla \bfv+ R\frac{\bfv}2-\Div \bfS(\varphi,\bfD\bfv)+\nabla \pi&=-\varepsilon_0\Div \big(\nabla \varphi\otimes\nabla \varphi\big),\\
      \Div \bfv&=0,\\
      \partial_t \varphi+\bfv\cdot\nabla \varphi&=m\Delta \mu,\\
      \mu&=\varepsilon_0^{-1}f'(\varphi)-\varepsilon_0\Delta \varphi,
      \end{array}\right.
\end{align}
where $\bfJ= - \frac{\partial \rho}{\partial \varphi} \nabla \mu$,
together with the boundary and initial conditions
\begin{align}\label{eq:1.2'}
\left\{\begin{array}{rl}
\bfv|_{\partial\Omega}=0&\qquad \mbox{ \,on $\partial \Omega\times (0,T)$,}\\
\partial_{\mathcal N} \varphi|_{\partial\Omega}=\partial_{\mathcal N}\mu|_{\partial\Omega}=0&\qquad \mbox{ \,on $\partial \Omega \times (0,T)$,}\\
\bfv(0,\cdot)=\bfv_0&\qquad \mbox{ \,in $\Omega$,}\\
\qquad \varphi(0,\cdot)=\varphi_0&\qquad\mbox{ \,in $\Omega$.}\end{array}\right.
\end{align}
Here $\mathcal N$ denotes the exterior normal of $\partial\Omega$. Finally, we note that \eqref{eq:NSCH1'} is equivalent to 
\begin{equation}\label{eq:NSCH1''}
        \partial_t (\varrho \bfv)+\Div (\bfv\otimes (\varrho\bfv+\bfJ))- R\frac{\bfv}2-\Div \bfS(\varphi,\bfD\bfv)+\nabla \pi=-\varepsilon_0\Div \big(\nabla \varphi\otimes\nabla \varphi\big)
\end{equation}
due to \eqref{eq:contEq'}. This reformulation will be used for the definition of weak solutions below.

In the following we will prove existence of weak solutions for \eqref{eq:NSCH}--\eqref{eq:1.2'}. So far \eqref{eq:NSCH}--\eqref{eq:1.2'} was only treated in the case that both fluids have the same densities, i.e., $\rho(\varphi)\equiv const.$. First analytic results in this case were obtained by Kim, Consiglieri, and Rodrigues~\cite{NonNewtonianModelH}. They proved existence of weak solutions if $p\geq \frac{3d+2}{d+2}$, $d=2,3$. In this case monotone operator techniques can be applied. In~\cite{GrasselliPrazakNonNewtonianDIM} Grasselli and Pra\v{z}\'ak discussed the longtime behavior of solutions of the system in the case $p\geq \frac{3d+2}{d+2}$, $d=2,3$, assuming periodic boundary conditions and a regular free energy density. For the same $p$ results on existence of weak solutions with a singular free energy density $f$ and the longtime behavior  were obtained by Bosia~\cite{Bosia} in the case of a bounded domain in $\R^3$. 
Finally, existence of weak solutions was shown by Abels, Diening, and Terasawa~\cite{NonNewtonianModelH} in the case that $p>\frac{2d}{d+2}$ using the parabolic Lipschitz truncation method for divergence free vector fields developed by Breit, Diening, and Schwarzacher~\cite{BreitDieningSchwarzacher}, which is the same range for $p$ as for a single power-law type fluid, cf. Diening, R{\r u}{\v{z}}i{\v{c}}ka, and Wolf~\cite{DieningRuzickaWolf} (the same bound appears in stationary results \cite{FrehseMalekSteinhauer2,DieningMalekSteinhauer}). For reference on analytic results in the Newtonian case ($p=2$) we refer to the introduction of \cite{AbelsDepnerGarcke}. 

Unfortunately, the Lipschitz truncation method of \cite{BreitDieningSchwarzacher} is not applicable to  \eqref{eq:NSCH} since the system provides only control of $\partial_t (\varrho \bfv)$ and not of $\partial_t \bfv$ (unless $\rho\equiv const.$) in suitable spaces. Alternatively we will use the $L^\infty$-truncation method, which was already successfully applied in \cite{Bu,Fr,Wolf} to prove existence of weak solutions for power-law type fluids if $p >\frac{2d+2}{d+2}$. 

Throughout the paper we make the following assumptions:
\begin{enumerate}
\item[(A1)] $f\colon\R\to \R$ is three-times differentiable such that there is some $C>0$ satisfying
  \begin{equation*}
    |f'''(s)|\leq C(|s|+1)\qquad \text{for all}\quad s\in\R
  \end{equation*}
  and $f''(s)\geq - \alpha$ for all $s\in\R$ and some $\alpha \geq 0$.
\item[(A2)]  $\bfS\colon \R\times \R_{sym}^{d\times d}\to \R^{d\times d}$ satisfies 
\begin{align*}
  |\bfS(s,\bfM)|&\leq C(|\bfM|^{p-1}+1)\\ 
  |\bfS(s_1,\bfM)-\bfS(s_2,\bfM)|&\leq C|s_1-s_2|(|\bfM|^{p-1}+1)\\ 
  \bfS(s,\bfM):\bfM&\geq \omega |\bfM|^p -C_1
\end{align*}
for all $\bfM\in \R_{sym}^{d\times d}$, $s,s_1,s_2\in \R$, and some $C,C_1,\omega>0$, $p\in (\frac{2d+2}{d+2},\infty)$. Here  $\bfA:\bfB= \operatorname{tr}(\bfA^T\bfB)$ and  $\R^{d\times d}_{sym}=\{A\in\R^{d\times d}: A^T=A\}$.
Moreover, we assume that $\bfS(c,\cdot)\colon \R^{d\times d}_{sym}\to \R^{d\times d}_{sym}$ is strictly monotone for every $c\in \R$, i.e. $\bfM_1,\bfM_2\in \R_{sym}^{d\times d}$
\begin{align*}
\big(\bfS(c,\bfM_1)-\bfS(c,\bfM_2)\big):\big(\bfM_1-\bfM_2\big)\geq0
\end{align*}
with $``=``$ if and only if $\bfM_1=\bfM_2$.
%$\bfS(s, A)= \nu(s)(1+|A|)^{p-2} A$ for all $s\in\R$ and symmetric $A\in\R^{d\times d}$, where $\nu\colon \R\to \R$ is bounded, strictly positive and globally Lipschitz continuous and $p>\frac{2d+2}{d+2}$.
\item[(A3)] $\varrho\colon \R\to \R$ is twice continuously differentiable and strictly positive. Moreover, $\varrho,\varrho',\varrho''$ are bounded.   
\end{enumerate}
We note that (A1) implies that 
\begin{equation*}
  |f(s)|\leq C(|s|^4+1), \qquad 
  |f'(s)|\leq C(|s|^3+1), \qquad 
  |f''(s)|\leq C(|s|^2+1)
\end{equation*}
for all $s\in\R$ and some $C>0$.

Our main result result is:
\begin{theorem}\label{thm:main}
  Let $\Omega\subseteq \R^d$ be a bounded domain with $C^4$-boundary, $0<T<\infty$, and let (A1)--(A3) be valid. Then for every $\bfv_0\in L^2_\sigma(\Omega)$, $\varphi_0\in B^3_{2,4}(\Omega)$ with $\partial_{\mathcal N} \varphi_0|_{\partial\Omega}=0$, there is a weak solution 
  \begin{equation*}
    \bfv\in L^\infty(0,T;L^2_\sigma(\Omega))\cap L^p(0,T;W^{1,p}_0(\Omega)^d), \varphi \in W^{1,4}(0,T;L^2(\Omega))\cap L^4(0,T;W^{4,2}(\Omega))
  \end{equation*}
  of \eqref{eq:NSCH}--\eqref{eq:1.2'} in the sense that 
  \begin{alignat*}{1}
    &-\int_{Q_T} \varrho(\varphi) \bfv\cdot \partial_t \bfeta \,\dd (x,t)- \int_\Omega \varrho(\varphi_0)\bfv_0\cdot \bfeta|_{t=0}\dx
    - \int_{Q_T} \bfv\otimes (\varrho(\varphi) \bfv + \bfJ):\nabla \bfeta\, \dd(x,t)\\
&\quad - \int_{Q_T} R \frac{\bfv}2\cdot \bfeta\, \dd(x,t) +\int_{Q_T} \bfS(\varphi,\bfD\bfv): \bfD\bfeta\, \dd(x,t) 
= \eps_0 \int_{Q_T} \nabla \varphi\otimes \nabla \varphi: \nabla \bfeta \, \dd(x,t)
  \end{alignat*}
  for all $\bfeta\in C^\infty([0,T)\times\Omega)^d$ with $\Div \bfeta=0$, \eqref{eq:NSCH}$_3$--\eqref{eq:NSCH}$_4$ are satisfied pointwise almost everywhere and \eqref{eq:1.2'}$_1$, \eqref{eq:1.2'}$_2$, \eqref{eq:1.2'}$_4$  hold true in the sense of traces.
\end{theorem}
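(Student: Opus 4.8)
The plan is a four-stage argument: (i) a Galerkin-type approximation of \eqref{eq:NSCH}--\eqref{eq:1.2'}, (ii) uniform a priori bounds from the energy identity, (iii) compactness, and (iv) identification of the limit of the viscous stress by the $L^\infty$-truncation method. For (i) I would discretize the momentum equation \eqref{eq:NSCH1''} in the span of the first $n$ eigenfunctions of the Stokes operator, so that all test functions stay divergence free, coupled to the Cahn--Hilliard subsystem \eqref{eq:NSCH3}--\eqref{eq:NSCH4}, which for the given (smooth) Galerkin velocity can be solved by a fixed-point argument based on maximal $L^4$-regularity for the fourth-order Neumann operator, keeping the exact initial datum $\varphi_0$. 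Since $\varrho$ is bounded and bounded away from zero by (A3), the coupled finite-dimensional problem is locally solvable, and the bounds of stage (ii) make it global on $(0,T)$.

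For (ii), testing \eqref{eq:NSCH1''} with $\bfv^n$, \eqref{eq:NSCH3} with $\mu^n$, and \eqref{eq:NSCH4} with $\partial_t\varphi^n$, and using \eqref{eq:contEq'} to cancel the $\bfJ$- and $R$-contributions exactly as in the formal energy identity recorded in the Introduction, yields uniformly in $n$ that $\bfv^n\in L^\infty(0,T;L^2_\sigma(\Omega))\cap L^p(0,T;W^{1,p}_0(\Omega)^d)$, $\varphi^n\in L^\infty(0,T;W^{1,2}(\Omega))$, $\nabla\mu^n\in L^2(Q_T)$, and $f(\varphi^n)\in L^\infty(0,T;L^1(\Omega))$. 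For the Cahn--Hilliard part one then bootstraps: rewriting \eqref{eq:NSCH4} as $-\Delta\varphi^n=\mu^n-f'(\varphi^n)$ with homogeneous Neumann data, elliptic regularity together with the growth bounds on $f',f'',f'''$ from (A1) and Sobolev embeddings in $d\le3$ upgrade the regularity of $\varphi^n$ and show $\bfv^n\cdot\nabla\varphi^n$ and $\Delta f'(\varphi^n)$ are bounded in $L^4(0,T;L^2(\Omega))$; maximal $L^4$-regularity for the Cahn--Hilliard operator---for which $\varphi_0\in B^3_{2,4}(\Omega)$ is precisely the correct trace space---then gives $\varphi^n\in W^{1,4}(0,T;L^2(\Omega))\cap L^4(0,T;W^{4,2}(\Omega))$ uniformly, hence $\mu^n$ bounded in $L^4(0,T;W^{2,2}(\Omega))$ and $\bfJ^n=-\varrho'(\varphi^n)\nabla\mu^n$, $R^n=-\nabla\varrho'(\varphi^n)\cdot\nabla\mu^n$ controlled.

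For (iii), the bounds on $\varphi^n$ and $\partial_t\varphi^n$ give, via Aubin--Lions, $\varphi^n\to\varphi$ strongly in $C([0,T];W^{1,2}(\Omega))$ and a.e., whence $\varrho(\varphi^n)\to\varrho(\varphi)$ and $\varrho'(\varphi^n)\to\varrho'(\varphi)$ strongly by continuity and boundedness, and $\nabla\varphi^n\otimes\nabla\varphi^n\to\nabla\varphi\otimes\nabla\varphi$ in $L^2(Q_T)$. The momentum balance \eqref{eq:NSCH1''} bounds $\partial_t(\varrho(\varphi^n)\bfv^n)$ uniformly in $L^1(0,T;(W^{1,r}_{0,\sigma}(\Omega))^{*})$ for suitable $r$; together with the spatial bound on $\varrho(\varphi^n)\bfv^n$, Aubin--Lions yields $\varrho(\varphi^n)\bfv^n\to\varrho(\varphi)\bfv$ strongly in $L^2(Q_T)$, and dividing by $\varrho(\varphi^n)$, which is bounded, bounded below and a.e.\ convergent, together with the parabolic interpolation bound $\bfv^n\in L^{p(d+2)/d}(Q_T)$ whose exponent exceeds $2$, gives $\bfv^n\to\bfv$ strongly in $L^2(Q_T)$. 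At this point every term of the Galerkin identity passes to the limit except the viscous stress, which converges only weakly, $\bfS(\varphi^n,\bfD\bfv^n)\rightharpoonup\overline{\bfS}$ in $L^{p'}(Q_T)$.

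The crux is (iv), showing $\overline{\bfS}=\bfS(\varphi,\bfD\bfv)$. Since the density-dependent momentum balance controls only $\partial_t(\varrho(\varphi^n)\bfv^n)$, not $\partial_t\bfv^n$, in the spaces needed for the parabolic Lipschitz truncation of \cite{BreitDieningSchwarzacher}, I would instead use the $L^\infty$-truncation in the spirit of \cite{Wolf}: localize to a ball $B$ with $\overline B\subset\Omega$, reconstruct a local pressure $\pi^n$ on $B$ and split it as $\pi^n=\pi^n_{\bfS}+\pi^n_{\mathrm{rest}}+\pi^n_h$, where $\pi^n_{\bfS}$ is the Calder\'on--Zygmund-type potential of $\bfS(\varphi^n,\bfD\bfv^n)$ (so $\|\pi^n_{\bfS}\|_{L^{p'}(B)}\le C\|\bfS(\varphi^n,\bfD\bfv^n)\|_{L^{p'}(B)}$), $\pi^n_{\mathrm{rest}}$ collects the convective, capillary and the additional variable-density lower-order terms, and $\pi^n_h$ is harmonic, and then pass to the pressure-corrected velocity $\bfv^n_B=\bfv^n+\nabla q^n$ on $B$ whose time derivative is bounded in $L^{p'}(0,T;L^{p'}(B))$ up to more regular pieces. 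Testing the difference of the resulting pressure-free equations for $\bfv^n_B$ and its weak limit with a cut-off, divergence-corrected version of the truncation $T_k(\bfv^n_B-\bfv_B)$, which is bounded in $L^\infty$, letting $n\to\infty$ at fixed $k$---here $p>\frac{2d+2}{d+2}$ enters in controlling the convective term against the gradient of the truncation, and one uses the strong $L^2$-convergence of $\bfv^n$, the a.e.\ convergence of $\varphi^n$, and the pressure estimates---and finally letting $k\to\infty$, one obtains that $\bigl(\bfS(\varphi^n,\bfD\bfv^n)-\bfS(\varphi^n,\bfD\bfv)\bigr):(\bfD\bfv^n-\bfD\bfv)\to0$ in $L^1_{\mathrm{loc}}(Q_T)$, hence, by the strict monotonicity in (A2), that $\bfD\bfv^n\to\bfD\bfv$ a.e.; with the continuity of $\bfS$ and $\varphi^n\to\varphi$ a.e.\ this gives $\bfS(\varphi^n,\bfD\bfv^n)\to\bfS(\varphi,\bfD\bfv)$ a.e., and equiintegrability from the $L^{p'}$-bound upgrades this to $L^1(Q_T)$-convergence, so $\overline{\bfS}=\bfS(\varphi,\bfD\bfv)$. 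Passing to the limit in the remaining terms---the convective term by strong $L^2$-convergence of $\bfv^n$ and weak convergence of $\bfJ^n$, the term $R^n\bfv^n/2$ likewise, the capillary term by strong convergence of $\nabla\varphi^n$---completes the weak formulation of \eqref{eq:NSCH1''}, while \eqref{eq:NSCH3}--\eqref{eq:NSCH4} hold pointwise a.e.\ and the boundary and initial conditions follow from the regularity and strong convergence of $\varphi^n$ and from weak/lower semicontinuity for the velocity. The main obstacle is exactly this identification step: in the variable-density setting one must construct a pressure-corrected velocity with a genuinely controlled time derivative while keeping all the additional correction and lower-order terms harmless in the truncation estimate, which is the technical heart of the argument.
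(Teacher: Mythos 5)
Your overall architecture (Galerkin $+$ energy estimates $+$ Aubin--Lions via $\varrho(\varphi^n)\bfv^n$ $+$ $L^\infty$-truncation) matches the paper's, and stages (ii)--(iii) are essentially the paper's Steps 1--3. But there is a structural gap in collapsing the approximation into a single Galerkin level. The $L^\infty$-truncation requires testing the \emph{difference} of the approximate and limit momentum equations with $\eta\,\Upsilon_L(|\bfq_n|)\bfq_n$ (resp.\ your $T_k(\bfv^n_B-\bfv_B)$), and this function does not lie in the Galerkin space $X_n$; the Galerkin identity holds only against (projections of) test functions from $\Span\{\bfw_1,\dots,\bfw_n\}$, so the difference equation you need is simply not available at that level. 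For $p<\tfrac{3d+2}{d+2}$ you also cannot identify the stress in the Galerkin limit by testing with $\bfv$ itself (the convective term $\bfv\otimes\varrho\bfv:\nabla\bfv$ is not integrable), so neither route closes. The paper resolves this by inserting an intermediate approximation in which the convective terms are mollified by $\Psi_\eps=e^{-\eps A}P_\sigma$: at fixed $\eps$ the mollification makes $\bfv\otimes\varrho\,\Psi_\eps\bfv:\nabla\bfv$ integrable, so the Galerkin limit can be handled by the plain energy-equality/monotone-operator argument (Theorem~\ref{thm:ExistenceNS}), producing genuine weak solutions that satisfy their PDE against \emph{all} admissible test functions; only then, in the limit $\eps=1/n\to0$, is the $L^\infty$-truncation applied. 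Your plan is missing this intermediate layer or a substitute for it (e.g.\ a quantitative commutator estimate for $\mathrm{Id}-P^n_l$ applied to the truncation, which you do not provide).

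The second gap is the one you yourself flag as ``the technical heart'' but do not resolve, and your sketch of it points in the wrong direction. You propose the constant-density recipe of \cite{Wolf}: a harmonic pressure part $\pi^n_h$ and a corrected velocity $\bfv^n+\nabla q^n$. In the variable-density setting the quantity with a controlled time derivative is $\varrho_n\bfv_n$, not $\bfv_n$, and since $\Div(\varrho_n\bfv_n)\neq0$ the ``harmonic'' part is \emph{not} harmonic: the correct object is $\bfq_n=\varrho_n\bfv_n-\varrho\bfv+\nabla\pi^n_h$ with $\Delta\pi^n_h=-\Div(\varrho_n\bfv_n-\varrho\bfv)$, chosen precisely so that $\Div\bfq_n=0$ and the $\pi^n_1$-term can be handled. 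One then needs interior Calder\'on--Zygmund estimates for $\nabla^2\pi^n_h$ together with the strong convergence $\bfv_n\to\bfv$ to get $\pi^n_h\to0$ in $L^q(0,T;W^{2,q}_{loc})$, and, after the truncation yields $\int\eta\,(\bfS(\varphi_n,\bfD\bfv_n)-\tilde\bfS):\psi_1(|\bfq_n|)\bfD\bfq_n\to0$, a further argument converting $\bfD\bfq_n$ back into $\varrho\,\bfD(\bfv_n-\bfv)$ by discarding $\bfD\nabla\pi^n_h$ and $\bfD((\varrho_n-\varrho)\bfv_n)$ and dividing by the strictly positive $\varrho$. None of this is supplied in your proposal, and without it the identification $\tilde\bfS=\bfS(\varphi,\bfD\bfv)$ does not go through.
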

The structure of the article is as follows: In Section~\ref{sec:Approx} we prove existence of weak solutions of a suitable approximation of \eqref{eq:NSCH}--\eqref{eq:1.2'}, where the convection terms $\rho \bfv \cdot  \nabla\bfv$ and $\bfv\cdot \nabla \varphi$ are mollified. This is done with the aid of a (partial) Galerkin approximation for the Navier-Stokes part using unique solvability of \eqref{eq:NSCH}$_3$--\eqref{eq:NSCH}$_4$ for given $\bfv$. Then in Section~\ref{sec:MainResult} we prove Theorem~\ref{thm:main} with the aid of the $L^\infty$-truncation method by passing to the limit in the approximated system.

\section{Notation and Preliminaries}

For $a,b \in \R^d$ we denote $a\otimes b = ab^T=(a_ib_j)_{i,j=1}^d\in\R^{d\times d}$.
The standard Lebesgue spaces (with respect to the Lebesgue measure) are denoted by $L^p(M)$, $1\leq p\leq \infty$, for some measurable $M\subset\R^N$, $L^p(M;X)$ the $X$-valued Bochner space, and $L^p(0,T;X)=L^p((0,T);X)$. 
 The standard $L^p$-Sobolev space is denoted by $W^{m,p}(\Omega)$. $W^{m,p}_0(\Omega)$ is the closure of $C_0^\infty(\Omega)$ in $W^{m,p}(\Omega)$ and $H^m(\Omega)=W^{m,2}(\Omega), H^m_0(\Omega)=W^{m,2}_0(\Omega)$.  Finally, spaces of divergence free test functions will be denoted with a subscript ``$\sigma$''. In particular, $C_{0,\sigma}^\infty(\Omega)=\{\bfvarphi\in C_0^\infty(\Omega)^d:\Div \bfvarphi=0\}$. $L^2_\sigma(\Omega)$ is the closure of $C_{0,\sigma}^\infty(\Omega)$ in $L^2(\Omega)$. Finally, $L_0^q(\Omega)$ denotes the subspace of $L^q(\Omega)$ of functions wit zero mean value.\\
By $B^s_{p,q}(\R^d)$, $s\in\R$, $1\leq p,q\leq \infty$, we denote the standard Besov space and $B^s_{p,q}(\Omega)$ its restriction to $\Omega$. We note that all results on interpolation of Sobolev- and Besov spaces for $\R^d$ carry over to the spaces on $\Omega$ using a suitable extension operator, cf. e.g. the discussion in \cite[Section 2]{ModelH}. In particular, we have for every $k\in\N$, $1\leq p_0,p_1<\infty$ that
\begin{equation*}
  (W^{k,p_0}(\Omega), W^{k+1,p_1}(\Omega))_{\theta,p}= B^{k+\theta}_{pp}(\Omega)\qquad \frac1p=\frac{1-\theta}{p_0}+\frac{\theta}{p_1},k\in \N_0,  
\end{equation*}
for all $\theta\in (0,1)$, cf. \cite[Section~2.4.2 Theorem~1]{Triebel1}. Here $(.,.)_{\theta,p}$ denotes the real interpolation functor.

Finally, the space of bounded and uniformly continuous functions $f\colon [0,T]\to X$ for some Banach space $X$ is denoted by $BUC([0,T];X)$. The space of weakly continuous functions $f\colon [0,T]\to X$ is denoted by $C_w([0,T];X)$.

\section{The Approximated System}\label{sec:Approx}
In this section we consider the approximate system
\begin{alignat}{1}\label{Approx1.1}
%  \left\{\begin{array}{rl}
      \varrho\partial_t \bfv+(\varrho(\Psi_\eps\bfv)+\bfJ))\cdot \nabla \bfv+ R\frac{\bfv}2-\Div \bfS(\varphi,\bfD\bfv)+\nabla \pi&=-\Psi_\eps\Div \big(\nabla \varphi\otimes\nabla \varphi\big),\\\label{Approx1.1b}
      \Div \bfv&=0,\\\label{Approx1.1c}%\qquad\qq\,\,\,\, \\
      \partial_t \varphi+(\Psi_\eps \bfv)\cdot\nabla \varphi&=\Delta \mu,\\\label{Approx1.1d}%\qquad\,\,\,\,\quad\quad&,\\
      \mu&=f'(\varphi)-\Delta \varphi%\,\qquad\qquad&,
%    \end{array}\right.
\end{alignat}
together with 
\begin{align}\label{Approx1.2}
\left\{\begin{array}{rc}
\bfv=0\qquad\,\,\,\,\quad\quad& \mbox{ \,on $\partial \Omega\times (0,T)$,}\\
\partial_{\mathcal N} \varphi=\partial_{\mathcal N}\mu=0\qquad\,\,\,\,\quad\quad& \mbox{ \,on $\partial \Omega\times (0,T)$,}\\
\bfv(0,\cdot)=\bfv_0\,\qquad \varphi(0,\cdot)=\varphi_0&\mbox{ \,in $\Omega$,}\end{array}\right.
\end{align}
where $R=-\nabla \frac{\partial \varrho}{\partial\varphi}\cdot \nabla \mu$, $\bfJ= -\frac{\partial \varrho}{\partial\varphi} \nabla \mu$, and $\Psi_\eps= e^{-\eps A}P_\sigma$, where $A=-P_\sigma \Delta$ is the Stokes operator. Then $\Psi_\eps \bfv\in H^2(\Omega)^d\cap H^1_0(\Omega)^d\cap L^2_\sigma(\Omega)$ for all $\bfv \in  L^2_\sigma(\Omega)$ and $\eps>0$. Moreover, $\Psi_\eps \bff\to_{\eps \to 0} P_\sigma \bff$ strongly in $L^2(\Omega)^d$ for all $\bff\in L^2(\Omega)^d$.  We note that \eqref{Approx1.1c} implies
\begin{equation*}
  \partial_t \varrho +\Div (\varrho \Psi_\eps \bfv + \bfJ) = R.
\end{equation*}
Therefore \eqref{Approx1.1} is equivalent to
\begin{equation}\label{eq:35}
    \partial_t(\varrho \bfv)+\Div (\bfv \otimes (\varrho(\Psi_\eps\bfv)+\bfJ))- R\tfrac{\bfv}2-\Div \bfS(\varphi,\bfD\bfv)+\nabla \pi=-\Psi_\eps\Div \big(\nabla \varphi\otimes\nabla \varphi\big)
\end{equation}
Moreover, using
\begin{equation*}
  -\operatorname{div} (\nabla \varphi \otimes \nabla \varphi)= \mu \nabla \varphi -\nabla \left( \tfrac{|\nabla \varphi|^2}2 + f(\varphi)\right),
\end{equation*}
we can replace $-\operatorname{div} (\nabla \varphi \otimes \nabla \varphi)$ on the right hand side of \eqref{Approx1.1} by $\mu\nabla \varphi$, which will be used in the following.

In order to show the existence of a solution of \eqref{Approx1.1}--\eqref{Approx1.1d} we first solve the Navier-Stokes and Cahn-Hilliard part separately. For this we need Besov spaces. However, they will not appear
anymore in the remainder of the paper.
\begin{theorem}\label{thm:ExistenceCH}
  Let $T>0$ and $\bfv \in L^\infty(0,T;L^2_\sigma(\Omega))$ be given. Then for every $\varphi_0\in B^3_{2,4}(\Omega)$ with $\partial_{\mathcal{N}} \varphi_0=0$ there is a unique $\varphi \in W^{1,4}(0,T;L^2(\Omega))\cap L^4(0,T;W^{2,2}(\Omega))$, which solves \eqref{Approx1.1c}--\eqref{Approx1.1d} together with \eqref{Approx1.2}$_2$ and $\varphi|_{t=0}= \varphi_0$. Moreover, the mapping
  \begin{equation*}
    L^\infty(0,T;L^2_\sigma(\Omega))\ni \bfv \mapsto \mathcal S[\bfv]=\varphi \in W^{1,4}(0,T;L^2(\Omega))\cap L^4(0,T;W^{2,2}(\Omega))
  \end{equation*}
  is continuous and bounded.
\end{theorem}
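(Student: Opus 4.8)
The plan is to treat \eqref{Approx1.1c}--\eqref{Approx1.1d} as a semilinear fourth-order parabolic equation for $\varphi$ with a prescribed, merely $L^\infty(0,T;L^2)$ velocity field entering only through the transport term $(\Psi_\eps\bfv)\cdot\nabla\varphi$. The crucial structural point is that $\Psi_\eps=e^{-\eps A}P_\sigma$ is a smoothing operator, so $\Psi_\eps\bfv\in L^\infty(0,T;H^k_\sigma(\Omega))$ for every $k$ with norm controlled by $\eps$ and $\|\bfv\|_{L^\infty(0,T;L^2)}$; hence the ``convection'' coefficient is as regular as we like in space and bounded in time, and $\Div(\Psi_\eps\bfv)=0$ with zero normal trace. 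I would first rewrite the system as $\partial_t\varphi=\Delta^2\varphi-\Delta f'(\varphi)-(\Psi_\eps\bfv)\cdot\nabla\varphi$ (using $\mu=f'(\varphi)-\Delta\varphi$ and the Neumann conditions $\partial_{\mathcal N}\varphi=\partial_{\mathcal N}\mu=0$), with the biharmonic heat semigroup generated by $-\Delta^2$ under these boundary conditions as the linear part.

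The existence proof I would carry out in three steps. \textbf{Step 1 (local existence).} Linearize and use a fixed-point/contraction argument in the space $X_T=C([0,T^*];H^2)\cap L^2(0,T^*;H^4)$ (or the maximal-regularity space for $\partial_t-\Delta^2$ with Neumann data), treating $-\Delta f'(\varphi)$ and $(\Psi_\eps\bfv)\cdot\nabla\varphi$ as perturbations; here the growth bounds on $f',f'',f'''$ following from (A1), together with the $C^4$-regularity of $\partial\Omega$ and Sobolev embeddings in $d\le 3$, give the needed Lipschitz estimates on bounded sets, and $T^*$ depends only on $\|\varphi_0\|_{B^3_{2,4}}$ and $\|\bfv\|_{L^\infty(0,T;L^2)}$. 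The natural trace space for the biharmonic equation with these initial/boundary conditions is exactly $B^3_{2,4}(\Omega)$ (real interpolation $(H^2,H^4)_{3/4,4}$), which explains the hypothesis on $\varphi_0$. \textbf{Step 2 (a priori estimates / global existence).} Test the equation with $\mu$ and integrate to get, using $f''\ge-\alpha$ from (A1) and Young's inequality to absorb the transport term $\int(\Psi_\eps\bfv\cdot\nabla\varphi)\mu$ (here one integrates by parts moving $\Delta$ off $\mu$, uses $\Div(\Psi_\eps\bfv)=0$, and bounds $\Psi_\eps\bfv$ in $L^\infty_tH^2_x$), the basic energy bound giving $\varphi\in L^\infty(0,T;H^1)$, $\mu\in L^2(0,T;H^1)$, hence $\Delta\varphi=f'(\varphi)-\mu\in L^2(0,T;L^2)$ so $\varphi\in L^2(0,T;H^2)$; bootstrapping via maximal regularity (differentiate in time or test with $\partial_t\varphi$ and $\Delta^2\varphi$) upgrades this to $\varphi\in W^{1,4}(0,T;L^2)\cap L^4(0,T;W^{2,2})$ — the fourth powers arising from the cubic nonlinearity $f'(\varphi)$ interpolated against the $L^\infty_tH^1$ bound in $d=3$. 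These estimates are uniform on $[0,T]$, so the local solution extends to $[0,T]$. \textbf{Step 3 (uniqueness and continuous dependence).} For two solutions with data $(\bfv_i,\varphi_{0,i})$ subtract the equations, test the difference equation with the inverse Neumann Laplacian of $\varphi_1-\varphi_2$ (or with $\varphi_1-\varphi_2$ directly), use monotonicity-type bounds on $f'$ coming from $f''\ge-\alpha$, bound the difference of transport terms using the smoothing of $\Psi_\eps$ and the Lipschitz continuity of $\bfv\mapsto\Psi_\eps\bfv$ from $L^2$ into $H^2$, and close via Gronwall; this simultaneously yields uniqueness (same data) and the asserted continuity and boundedness of $\mathcal S$.

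The main obstacle I expect is \textbf{Step 2}, specifically closing the higher-order estimates to reach precisely $W^{1,4}(0,T;L^2)\cap L^4(0,T;W^{2,2})$ uniformly in time while keeping track of the $\eps$-dependent but finite constants from $\Psi_\eps$ and controlling the cubic term $\Delta f'(\varphi)=f'''(\varphi)|\nabla\varphi|^2\nabla\varphi+\dots$ with the borderline Sobolev exponents available in $d=3$ — this is where the growth hypothesis $|f'''(s)|\le C(|s|+1)$ is exactly calibrated so that the resulting nonlinear terms are controlled by the energy norms after interpolation. The transport term is comparatively harmless here only because $\Psi_\eps$ is smoothing; without it one would face the same difficulties as in the full coupled problem. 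A secondary technical point is handling the Neumann biharmonic semigroup on a merely $C^4$ domain and verifying the interpolation identity that makes $B^3_{2,4}(\Omega)$ the correct initial-data space, for which I would cite \cite{Triebel1} and the extension-operator discussion referenced in the paper.
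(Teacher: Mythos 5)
Your plan is workable, but it takes a genuinely different route from the paper for the existence/uniqueness step. The paper does not set up a local fixed point for the biharmonic semigroup at all: it first proves (Lemma~\ref{lem:ExistenceCH}) global existence and uniqueness of a solution in $L^\infty(0,T;H^1)\cap L^2(0,T;H^3)\cap W^{1,2}(0,T;H^{-1}_{(0)})$ by writing the Cahn--Hilliard operator as the subgradient $\partial_{H^{-1}_{(0)}}E_0$ of the convex part of the energy (Corollary~\ref{cor:Subgradient}) and treating $-\Psi_\eps\bfv\cdot\nabla\varphi-\alpha\Delta_N\varphi$ as a Lipschitz perturbation, invoking the abstract evolution result \cite[Theorem~4]{ModelH} (Theorem~\ref{thm:MonotoneLipschitz}); this yields a global-in-time solution directly and sidesteps your continuation argument and the verification that the local existence time is controlled by quantities that stay bounded. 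Your Step~2 bootstrap is then essentially identical to the paper's: rewrite the equation as $\partial_t\varphi+\Delta_N\Delta\varphi=F$, check $F\in L^2(Q_T)$ and then $F\in L^4(0,T;L^2)$ using the growth of $f'',f'''$ and the smoothing of $\Psi_\eps$, and apply linear maximal $L^p$-regularity (the paper cites \cite{DenkHieberPruessMZ}) twice. Your Step~3 (Gronwall on the difference equation) is what the paper does in Lemma~\ref{lem:S}, though note that this only yields Lipschitz continuity of $\mathcal S$ into $L^\infty(0,T;L^2)$, which is all the paper actually uses. What your route buys is self-containedness (no subgradient machinery); what the paper's route buys is that globality and uniqueness come for free from the monotone-operator theorem, so the only real work is the regularity bootstrap.

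Two small slips you should fix: the rewritten equation must read $\partial_t\varphi=-\Delta^2\varphi+\Delta f'(\varphi)-(\Psi_\eps\bfv)\cdot\nabla\varphi$ (your displayed version has the backward-parabolic sign, although your subsequent prose about the semigroup generated by $-\Delta^2$ is consistent with the correct sign); and the trace space for $W^{1,4}(0,T;L^2)\cap L^4(0,T;H^4)$ is $(L^2,H^4)_{3/4,4}=B^3_{2,4}(\Omega)$, not $(H^2,H^4)_{3/4,4}$ (compare Remark~\ref{cor:ExistenceCH}).
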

\begin{remark}\label{cor:ExistenceCH}
We note that the condition $\varphi_0\in B^3_{2,4}(\Omega)$ is necessary for the regularity of $\varphi$ in the theorem because of 
\begin{align*}
 L^4(0,T; &W^{4,2}(\Omega))\cap W^{1,4}(0,T; L^2(\Omega))\hookrightarrow BUC([0,T]; (L^2(\Omega), W^{4,2}(\Omega))_{3/4,4}),\\
&(L^2(\Omega), W^{4,2}(\Omega))_{3/4,4}=B^3_{2,4}(\Omega),
\end{align*}
cf. Amann~\cite[Chapter III, Theorem 4.10.2]{Amann}. Moreover, because of $B^3_{2,4}(\Omega)\hookrightarrow\hookrightarrow C^1(\overline{\Omega})$, 
we have $\varphi\in L^\infty(0,T;C^1(\overline\Omega))$.
\end{remark}
For the following we define
\begin{alignat*}{1}
  L^{p}_{(0)}(\Omega) &= \left\{u\in L^{p}(\Omega): \int_\Omega u\dx =0\right\}\qquad \text{for}\quad1\leq p \leq \infty,\\
 H^1_{(0)}(\Omega)&= H^1(\Omega)\cap L^2_{(0)}(\Omega),\quad  H^{-1}_{(0)}(\Omega)= (H^1_{(0)}(\Omega))',
\end{alignat*}
and $\Delta_N\colon H^1_{(0)}(\Omega)\to H^{-1}_{(0)}(\Omega)$ by
\begin{equation*}
  -\left\langle\Delta_N \varphi,\psi\right\rangle_{H^{-1}_{(0)},H^{1}_{(0)}} = \int_\Omega \nabla \varphi\cdot \nabla \psi \dx \quad \text{for all}\quad \varphi,\psi\in H^{1}_{(0)}(\Omega).
\end{equation*}
As usual we identify $u\in L^2_{(0)}(\Omega)$ with $\psi\mapsto \int_\Omega u\psi \, dx\in H^{-1}_{(0)}(\Omega)$. Moreover, let $P_0 g= g- \frac1{|\Omega|}\int_\Omega g\dx$. Then $P_0\colon L^2(\Omega)\to L^2(\Omega)$ is the orthogonal projection onto $L^2_{(0)}(\Omega)$.

For the proof of Theorem~\ref{thm:ExistenceCH} we will use:
\begin{lemma}\label{lem:ExistenceCH}
  Let $T>0$ and $\bfv\in L^\infty(0,T;L^2_\sigma(\Omega))$ be given.
  Then for every $\varphi_0\in H^1(\Omega)$ there is a unique solution $$\varphi \in L^\infty(0,T;H^1(\Omega))\cap L^2(0,T;H^3(\Omega))\cap W^{1,2}(0,T;H^{-1}_{(0)}(\Omega))$$ of \eqref{Approx1.1c}--\eqref{Approx1.1d} together with \eqref{Approx1.2}$_2$ and $\varphi|_{t=0}= \varphi_0$ in the sense that
  \begin{equation*}
    \partial_t \varphi+ \Psi_\eps \bfv \cdot \nabla \varphi  = \Delta_N \mu_0(t,x)\quad \text{in }\mathcal{D}'(0,T;H^1_{(0)}(\Omega)),
  \end{equation*}
  where $\partial_{\mathcal{N}}\varphi|_{\partial\Omega} =0$ and
  \begin{equation*}
    \mu_0 = -\Delta \varphi + P_0f'(\varphi).
  \end{equation*}
\end{lemma}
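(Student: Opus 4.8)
\textbf{Proof strategy for Lemma~\ref{lem:ExistenceCH}.}
The plan is to solve the convective Cahn--Hilliard equation \eqref{Approx1.1c}--\eqref{Approx1.1d} for fixed $\bfv$ by a Galerkin scheme combined with the standard energy estimate, and then bootstrap the regularity. First I would rewrite the system in the abstract form $\partial_t\varphi = \Delta_N\mu_0 - \Psi_\eps\bfv\cdot\nabla\varphi$ with $\mu_0 = -\Delta\varphi + P_0 f'(\varphi)$; note that $\Psi_\eps\bfv \in L^\infty(0,T;H^2(\Omega)^d\cap L^2_\sigma(\Omega))$ so the drift term is harmless, and that $\int_\Omega(\Psi_\eps\bfv\cdot\nabla\varphi)\dx = 0$ because $\Div(\Psi_\eps\bfv)=0$ and $\Psi_\eps\bfv$ vanishes on $\partial\Omega$, so the mean of $\varphi$ is conserved and the equation is consistent with its stated formulation in $H^{-1}_{(0)}(\Omega)$ after subtracting the (time-independent) mean. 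As Galerkin basis I would take the eigenfunctions $\{w_k\}$ of the Neumann Laplacian on $\Omega$, which lie in $C^4(\overline\Omega)$ by the $C^4$-regularity of $\partial\Omega$ and form an orthogonal basis of both $L^2(\Omega)$ and $H^1(\Omega)$; the finite-dimensional ODE system has a local-in-time solution by Picard--Lindel\"of since $f'$ is $C^2$ hence locally Lipschitz.

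The a priori estimates come from testing with $\mu_0$ (i.e. differentiating the energy): using \eqref{Approx1.1c}--\eqref{Approx1.1d} formally,
\begin{equation*}
  \frac{\dd}{\dt}\int_\Omega\Big(\tfrac{|\nabla\varphi|^2}2 + f(\varphi)\Big)\dx + \int_\Omega|\nabla\mu_0|^2\dx = -\int_\Omega(\Psi_\eps\bfv\cdot\nabla\varphi)\,\mu_0\dx,
\end{equation*}
and the right-hand side is controlled by integrating by parts and using $\|\Psi_\eps\bfv\|_{H^2}\lesssim\|\bfv\|_{L^2}$ together with interpolation and Young's inequality; the coercivity assumption $f''\geq-\alpha$ in (A1) (equivalently $f(s)\geq -c(|s|^2+1)$ after integrating) together with conservation of mean and Poincar\'e's inequality absorb the lower-order terms, giving a Gronwall bound for $\|\varphi\|_{L^\infty(0,T;H^1)}$ and $\|\nabla\mu_0\|_{L^2(Q_T)}$. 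From the growth bounds $|f'(s)|\leq C(|s|^3+1)$, $|f''(s)|\leq C(|s|^2+1)$ in (A1), together with $H^1\hookrightarrow L^6$ in $d\leq 3$, one gets $P_0 f'(\varphi)\in L^\infty(0,T;L^2(\Omega))$ and then, from $\mu_0 = -\Delta\varphi + P_0 f'(\varphi)$ read as an elliptic equation for $\varphi$ with the $L^2(0,T;H^1)$ bound on $\mu_0$, elliptic regularity yields $\varphi\in L^2(0,T;H^3(\Omega))$ with $\partial_{\mathcal N}\varphi|_{\partial\Omega}=0$; finally $\partial_t\varphi\in L^2(0,T;H^{-1}_{(0)}(\Omega))$ follows directly from the equation since $\Delta_N\mu_0$ and $\Psi_\eps\bfv\cdot\nabla\varphi$ are both in that space. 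These bounds are uniform in the Galerkin index, so one passes to the limit: weak/weak-$*$ limits plus the Aubin--Lions compactness $L^2(0,T;H^3)\cap W^{1,2}(0,T;H^{-1}_{(0)})\hookrightarrow\hookrightarrow L^2(0,T;H^2)$ give strong $L^2$ convergence of $\varphi$ and $\nabla\varphi$, enough to pass to the limit in the nonlinearities $f'(\varphi)$ and $\Psi_\eps\bfv\cdot\nabla\varphi$; the initial datum is attained in $C([0,T];L^2)$ by the standard argument.

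For uniqueness, I would take two solutions $\varphi_1,\varphi_2$ with the same data, set $\varphi=\varphi_1-\varphi_2$ (which has zero mean for all $t$), test the difference equation with $(-\Delta_N)^{-1}\varphi$ (the inverse Neumann Laplacian on $L^2_{(0)}$), and estimate
\begin{equation*}
  \tfrac12\frac{\dd}{\dt}\|\varphi\|_{H^{-1}_{(0)}}^2 + \|\nabla\varphi\|_{L^2}^2 = -\int_\Omega\big(P_0 f'(\varphi_1) - P_0 f'(\varphi_2)\big)\varphi\dx - \int_\Omega\big(\Psi_\eps\bfv\cdot\nabla\varphi\big)(-\Delta_N)^{-1}\varphi\dx,
\end{equation*}
using that $\varphi_1,\varphi_2\in L^\infty(0,T;C^1(\overline\Omega))$ (from the Besov embedding in Remark~\ref{cor:ExistenceCH}, though here for the $H^1$-datum case one argues with the $L^6$-bounds and the local Lipschitz bound $|f'(\varphi_1)-f'(\varphi_2)|\lesssim(|\varphi_1|^2+|\varphi_2|^2+1)|\varphi|$ and interpolation between $H^{-1}_{(0)}$ and $H^1$) to close a Gronwall estimate, so $\varphi\equiv 0$. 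The step I expect to be the main obstacle is the a priori estimate controlling the drift term $\int_\Omega(\Psi_\eps\bfv\cdot\nabla\varphi)\mu_0\dx$ and the nonlinear $f$-terms simultaneously: one must be careful to exploit the smoothing $\Psi_\eps = e^{-\eps A}P_\sigma$ (which makes $\Psi_\eps\bfv$ as regular as needed, with norm bounded only by $\|\bfv\|_{L^2}$ and a constant depending on $\eps$) and the conservation of mass, since $f$ is merely a smooth function with quartic growth and one cannot rely on $\varphi\in[-1,1]$; everything else is a routine Galerkin-plus-Aubin--Lions argument.
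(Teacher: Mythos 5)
Your strategy is sound, but it is genuinely different from the paper's. The paper runs no Galerkin scheme: after normalizing $\int_\Omega\varphi_0\dx=0$, it recasts \eqref{Approx1.1c}--\eqref{Approx1.1d} as an abstract evolution equation $\tfrac{du}{dt}+\mathcal{A}(u)\ni\mathcal{B}(t,u)$ in $H_0=H^{-1}_{(0)}(\Omega)$, where $\mathcal{A}(\varphi)=-\Delta_N(-\Delta\varphi+f_0'(\varphi))$ is identified (Corollary~\ref{cor:Subgradient}) with the subgradient $\partial_{H^{-1}_{(0)}}E_0$ of the \emph{convex} functional $E_0(\varphi)=\int_\Omega\tfrac{|\nabla\varphi|^2}{2}+f_0(\varphi)\dx$ built from the splitting $f(s)=f_0(s)-\alpha\tfrac{s^2}{2}$ with $f_0$ convex, and $\mathcal{B}(t)\varphi=-\Psi_\eps\bfv(t)\cdot\nabla\varphi-\alpha\Delta_N\varphi$ is a bounded linear, hence Lipschitz, map $H^1_{(0)}(\Omega)\to H^{-1}_{(0)}(\Omega)$ with norm controlled by $C_\eps\|\bfv\|_{L^\infty(0,T;L^2)}+\alpha$. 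Existence, uniqueness, $\varphi\in L^\infty(0,T;H^1)\cap W^{1,2}(0,T;H^{-1}_{(0)})$ and $\varphi(t)\in\mathcal{D}(\mathcal{A})$ a.e.\ then come in one stroke from the cited abstract result (Theorem~\ref{thm:MonotoneLipschitz}, i.e.\ \cite[Theorem~4]{ModelH}); only the upgrade to $L^2(0,T;H^3)$ is done by elliptic regularity, as in your bootstrap. What the paper's route buys is that compactness, the energy estimate for nonconvex $f$, and uniqueness are all outsourced to the subgradient machinery; your route is self-contained but must redo each of these by hand. Your $H^{-1}_{(0)}$-Gronwall uniqueness argument is essentially the standard proof hidden inside the abstract theorem and closes correctly with the $L^\infty(0,T;L^6)$ bounds and the interpolation $\|\varphi\|_{L^2}^2\le\|\varphi\|_{H^{-1}_{(0)}}\|\varphi\|_{H^1}$.

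One step of your a priori estimate is not correct as stated. You claim that $f(s)\ge -c(|s|^2+1)$ together with mean conservation and Poincar\'e's inequality makes $E(\varphi)=\int_\Omega\tfrac{|\nabla\varphi|^2}{2}+f(\varphi)\dx$ coercive; this fails when $\alpha$ exceeds the first nonzero Neumann eigenvalue (take $f(s)=-\tfrac{\alpha}{2}s^2$ and $\varphi$ a multiple of the corresponding eigenfunction: $E$ is then unbounded below on the mean-zero class), so Gronwall applied to $E$ alone does not control $\|\nabla\varphi\|_{L^2}$. Since (A1) allows arbitrary $\alpha\ge0$, you must use the same convex splitting as the paper: work with $\tilde{E}(\varphi)=\int_\Omega\tfrac{|\nabla\varphi|^2}{2}+f_0(\varphi)\dx$, which \emph{is} coercive (a convex $f_0$ is bounded below by an affine function), and compensate by computing $\tfrac{\alpha}{2}\tfrac{\dd}{\dt}\|\varphi\|_{L^2}^2=-\alpha\int_\Omega\nabla\mu_0\cdot\nabla\varphi\dx$ from the equation, absorbing this term into the dissipation $\int_\Omega|\nabla\mu_0|^2\dx$ by Young's inequality. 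With that standard modification your Galerkin argument goes through.
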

\begin{proof}
In the following we assume without loss of generality that $\int_\Omega \varphi_0 \dx =0$. Otherwise we replace $\varphi$ and $f$ by $\varphi -m$ and $f(\cdot +m)$ for $m= \frac1{|\Omega|}\int_\Omega \varphi_0(x)\dx$.

The lemma will be a consequence of \cite[Theorem~4]{ModelH} and elliptic regularity theory. We refer to the Appendix~\ref{app:EvolEq} for a summary of this result and basic facts from the theory of monotone operators. To this end we define
$  H_0 =  H^{-1}_{(0)}(\Omega)$, $H_1=  H^{1}_{(0)}(\Omega)$ and $\mathcal{A}\colon \mathcal{D}(A)\to H_0$, $\mathcal{B}\colon [0,T]\times H_1\to H_0$ by
\begin{alignat*}{1}
\mathcal{A}(\varphi) &= -\Delta_N(-\Delta \varphi +f'_0(\varphi)),\quad \varphi\in\mathcal{D}(A),\\
\mathcal{D}(A) &= \{\varphi \in H^3(\Omega)\cap H^{1}_{(0)}(\Omega): \partial_{\mathcal{N}}\varphi|_{\partial\Omega} =0 \},\\
\mathcal{B}(t) (\varphi)&= -\Psi_\eps \bfv (t) \cdot \nabla \varphi -\alpha\Delta_N \varphi \quad \text{for all}\quad\varphi\in H_1, t\in [0,T],
\end{alignat*}
where $f(s)=f_0(s)-\alpha \frac{s^2}2$ with $f_0\colon \R\to\R$ convex and $\alpha\geq 0$.
  Moreover, let $E_0\colon L^2_{(0)}(\Omega)\to (-\infty,+\infty]$ be defined by $E_0(\varphi)= \int_\Omega \frac{|\nabla \varphi|^2}2 + f_0(\varphi)\dx$ if $\varphi\in \operatorname{dom}(E_0):= H^1_{(0)}(\Omega) $ and $E_0(\varphi)=+\infty$ else. 
% {\bf Claim:} The $L^2_{(0)}(\Omega)$-subgradient  $\partial_{L^2_{(0)}} E_0$ of $E_0$ is single-valued and we have
%   \begin{alignat*}{1}
%     \mathcal{D}(\partial_{L^2_{(0)}} E_0) &=\{ u\in H^2(\Omega)\cap  H^1_{(0)}(\Omega): \partial_{\mathcal{N}}\varphi|_{\partial\Omega} =0\},\\
% \partial_{L^2_{(0)}} E_0(\varphi)&=-\Delta \varphi +f'_0(\varphi)\quad \text{for all}\quad\varphi \in \mathcal{D}(\partial_{L^2_{(0)}} E_0).
%   \end{alignat*}
% From the claim one immediately obtains for the $H^{-1}_{(0)}(\Omega)$-subgradient
Because of Corollary~\ref{cor:Subgradient} in the appendix,
 $\mathcal{A}= \partial_{H^{-1}_{(0)}} E_0$.
% Then
%   \begin{alignat*}{1}
%     \mathcal{D}(\partial_{H^{-1}_{(0)}} E_0) &=\{ u\in H^3(\Omega)\cap  H^1_{(0)}(\Omega): \partial_{\mathcal{N}}\varphi|_{\partial\Omega} =0\},\\
% \partial_{H^{-1}_{(0)}} E_0(\varphi)&=-\Delta_N(-\Delta \varphi +f'_0(\varphi))\quad \text{for all }\varphi \in \mathcal{D}(\partial_{H^{-1}_{(0)}} E_0),
%   \end{alignat*}
% which is proved in the appendix, cf. Corollary~\ref{cor:Subgradient}.
% Therefore $\mathcal{A}= \partial_{H^{-1}_{(0)}} E_0$. 
Now \cite[Theorem~4]{ModelH}, cf. Theorem~\ref{thm:MonotoneLipschitz} in the appendix, implies the existence of a unique solution $\varphi \in L^\infty(0,T;H^1(\Omega))\cap W^{1,2}(0,T;H^{-1}_{(0)}(\Omega)$ with $\varphi(t) \in\mathcal{D}(\A) $ for almost every $t\in (0,T)$ since
\begin{equation*}
  \|\mathcal{B}(t)\|_{\mathcal{L}(H_1,H_0)}\leq \|\Psi_\eps \bfv\|_{L^\infty(0,T;H^2)}+ \alpha\leq C_\eps \|\bfv\|_{L^\infty(0,T;L^2)}+\alpha
\end{equation*}
for all most every $t\in[0,T]$. Using elliptic regularity theory and the equation, one obtains additionally $\varphi \in L^2(0,T;H^3(\Omega))$.
%   We want to apply \cite[Theorem 2.1]{KoehneWeightedLp}. To this end we choose
%   $X_0=L^2(\Omega)$, $X_1= \{u\in H^4(\Omega): \partial_{\mathcal{N}} u|_{\partial\Omega}= \partial_{\mathcal{N}}\Delta u|_{\partial\Omega} =0 \}$, $\mu=\tfrac34$, and $V_\mu= X_{\gamma,\mu}=(X_0,X_1)_{\frac12,4}$. Then
%   \begin{equation*}
%     X_{\gamma,\mu} = (X_0,X_1)_{\frac12,4} = \{u\in B^2_{2,4}(\Omega): \partial_{\mathcal{N}} u|_{\partial\Omega}=0\}.
%   \end{equation*} %%% Referenz einf�gen!!! Oder nur Einbettungen nutzen!!!
%   Moreover, let $A(\psi)\colon X_1\to X_0$ and $F\colon X_{\gamma,\mu} \to X_1 $ be defined by
%   \begin{alignat*}{2}
%     A(\psi)\varphi&= m\Delta^2\varphi - mf''(\psi)\Delta \varphi&\qquad& \text{for all }\varphi\in X_1,\\
% F(\varphi)&= mf'''(\varphi)|\nabla\varphi|^2&\qquad& \text{for all }\varphi\in X_{\gamma,\mu}.
%   \end{alignat*}
%   Then $A\in C^{1-}(X_{\gamma,\mu}; \mathcal{L}(X_1,X_1)\times X_0)$. Hence the result follows from \cite[Theorem 2.1]{KoehneWeightedLp}.
\end{proof}
%In order to show that the local-in-time solution does not blow-up we use
% \begin{lemma}
%   Let $\varphi$ be as in the previous lemma and $T_0\geq T>0$ and $R>0$ such that$\|\bfv\|_{L^\infty(0,T;L^2)}\leq R$ and $\int_\Omega \frac{|\nabla \varphi_0|^2}2 + f(\varphi_0)\, dx\leq R$. Then there is a constant $C(R,T_0)$ independent of $T$, $\varphi$, $\eps>0$ and $\bfv$ such that
%   \begin{equation*}
%     \|\varphi\|_{L^\infty(0,T;L^2)}+     \|\varphi\|_{L^2(0,T;H^3)}\leq C(R,T_0).
%   \end{equation*}
% \end{lemma}
% \begin{proof}
%   \emph{To be done!!! -- Use the energy estimate}
% \end{proof}
\noindent
\emph{Proof of Theorem~\ref{thm:ExistenceCH}:} Due to Lemma~\ref{lem:ExistenceCH} it remains to prove higher regularity of the solution $\varphi$ . To this end we use that
  \begin{equation*}
    \partial_t \varphi + \Delta_N\Delta\varphi =  P_0(f''(\varphi)\Delta \varphi + f'''(\varphi)|\nabla\varphi|^2)- \Psi_\eps \bfv \cdot \nabla \varphi=:F,
  \end{equation*}
  in $L^2(0,T;H^{-1}_{(0)}(\Omega))$, 
  where
  \begin{alignat*}{1}
    \|f''(\varphi)\Delta \varphi \|_{L^2(\Q_T)}&\leq \|f''(\varphi)\|_{L^\infty(0,T;L^3)}\|\Delta \varphi\|_{L^2(0,T;L^6)}<\infty\\
    \|f'''(\varphi)|\nabla \varphi|^2 \|_{L^2(\Q_T)}&\leq \|f'''(\varphi)\|_{L^\infty(0,T;L^6)}\|\nabla \varphi\|_{L^4(0,T;L^6)}^2\\
&\leq C(\|\varphi\|_{L^\infty(0,T;H^1)}+1)\|\varphi\|_{L^4(0,T;H^2)}<\infty\\
\| \Psi_\eps \bfv \cdot \nabla \varphi\|_{L^2(\Q_T)}&\leq \|\Psi_\eps \bfv\|_{L^\infty(\Q_T)}\|\nabla \varphi\|_{L^2(\Q_T)}\leq C_\eps \|\bfv\|_{L^\infty(0,T;L^2)}\|\varphi\|_{L^2(0,T;H^1)}<\infty
  \end{alignat*}
since $|f''(s)|\leq C(s^2+1)$, $|f'''(s)|\leq C(|s|+1)$ and $\varphi \in L^\infty(0,T;H^1(\Omega))\cap L^2(0,T;H^3(\Omega))\hookrightarrow L^4(0,T;H^2(\Omega))$. Hence, by linear theory we obtain $\varphi \in L^2(0,T;H^4(\Omega))\cap W^{1,2}(0,T;L^2(\Omega))$ due to \cite[Theorem~2.3]{DenkHieberPruessMZ}. 
In particular $\varphi \in L^\infty(0,T;H^2(\Omega))$. Using similar estimates as above we obtain $F\in L^4(0,T;L^2(\Omega))$. Therefore applying \cite[Theorem~2.3]{DenkHieberPruessMZ} once more, we obtain $\varphi \in W^{1,4}(0,T;L^2(\Omega))\cap L^4(0,T;H^4(\Omega))$.
\qed
\label{sec:pappr}

\begin{lemma}\label{lem:S}
Let $\kappa>0$ be arbitrary. Then the solution operator $\mathcal S[\bfv]$ from Theorem \ref{thm:ExistenceCH} satisfies
\begin{align*}
\sup_{(0,T)}\big\|\mathcal S[\bfv_1]-\mathcal S[\bfv_2]\big\|_{2,\Omega}\leq\,c(\kappa,\varepsilon,T)T^{\frac12}\sup_{(0,T)}\big\|\bfv_1-\bfv_2\big\|_{2,\Omega}
\end{align*}
for any $\bfv_1,\bfv_2$ belonging to the set
\begin{align*}
M_\kappa=\big\{\bfu\in L^\infty(0,T;L^{2}_\sigma(\Omega)):\|\bfu\|_{L^\infty(0,T;L^2(\Omega))}\leq\kappa\big\}.
\end{align*}
Here $c(\kappa, T)$ is non-decreasing with respect to $T>0$.
\end{lemma}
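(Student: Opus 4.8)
The plan is to take two velocity fields $\bfv_1,\bfv_2\in M_\kappa$, set $\varphi_i=\mathcal S[\bfv_i]$, $\mu_i = f'(\varphi_i)-\Delta\varphi_i$, and test the equation for the difference $\psi:=\varphi_1-\varphi_2$ with a suitable function in order to produce a Gr\"onwall-type inequality for $\sup_{(0,t)}\|\psi(s)\|_{2,\Omega}^2$. Subtracting \eqref{Approx1.1c} for the two indices gives
\begin{equation*}
  \partial_t\psi + \Psi_\eps\bfv_1\cdot\nabla\varphi_1 - \Psi_\eps\bfv_2\cdot\nabla\varphi_2 = \Delta(\mu_1-\mu_2),
\end{equation*}
and writing the convective difference as $\Psi_\eps\bfv_1\cdot\nabla\psi + \Psi_\eps(\bfv_1-\bfv_2)\cdot\nabla\varphi_2$ and $\mu_1-\mu_2 = f'(\varphi_1)-f'(\varphi_2)-\Delta\psi$, I would first test with $\psi$ itself. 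The leading term $-\int_\Omega\Delta^2\psi\,\psi = -\int_\Omega|\Delta\psi|^2$ has the wrong sign, so instead the natural choice is to test with $(-\Delta_N)^{-1}P_0\psi$ (note $\psi$ has zero spatial mean by the Neumann conditions), which is the standard $H^{-1}_{(0)}$-energy method for Cahn--Hilliard: this turns the biharmonic term into $-\|\nabla\psi\|_2^2$ (good sign) and the $f'$-difference into $\int_\Omega(f'(\varphi_1)-f'(\varphi_2))\psi \ge -\alpha\|\psi\|_2^2$ after absorbing, using (A1) and the uniform bound $\varphi_i\in L^\infty(0,T;C^1(\overline\Omega))$ from Remark~\ref{cor:ExistenceCH} (together with the a priori bound on $\mathcal S$ over $M_\kappa$, which gives a constant depending only on $\kappa,\eps,T$).

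The key steps, in order: (i) establish that $\|\varphi_i\|_{L^\infty(0,T;C^1(\overline\Omega))} + \|\varphi_i\|_{L^4(0,T;W^{2,2})}$ is bounded by a constant $c(\kappa,\eps,T)$ for $\bfv_i\in M_\kappa$, using Theorem~\ref{thm:ExistenceCH}, Remark~\ref{cor:ExistenceCH}, and boundedness of $\mathcal S$; (ii) derive the identity
\begin{equation*}
  \tfrac12\tfrac{\dd}{\dt}\|\psi\|_{H^{-1}_{(0)}}^2 + \|\nabla\psi\|_2^2 = \langle \Psi_\eps(\bfv_1-\bfv_2)\cdot\nabla\varphi_2,(-\Delta_N)^{-1}\psi\rangle + \langle\Psi_\eps\bfv_1\cdot\nabla\psi,(-\Delta_N)^{-1}\psi\rangle - \int_\Omega(f'(\varphi_1)-f'(\varphi_2))\psi;
\end{equation*}
(iii) bound the right-hand side: the first term by $\|\bfv_1-\bfv_2\|_2\|\nabla\varphi_2\|_\infty\|\psi\|_{H^{-1}_{(0)}}$ using $\|\Psi_\eps\|_{\mathcal L(L^2)}\le 1$; the $f'$-term by $c\|\psi\|_2^2 \le c\|\psi\|_{H^{-1}_{(0)}}\|\nabla\psi\|_2$ via interpolation (or $\le \tfrac12\|\nabla\psi\|_2^2 + c\|\psi\|_{H^{-1}_{(0)}}^2$) after exploiting $f''(s)\ge-\alpha$; and the term $\langle\Psi_\eps\bfv_1\cdot\nabla\psi,(-\Delta_N)^{-1}\psi\rangle$ — which I expect to be the main obstacle — by moving a derivative off $\psi$, i.e. $ = -\langle\Psi_\eps\bfv_1\,\psi,(-\Delta_N)^{-1}\nabla\psi\rangle$, then estimating via $\|\Psi_\eps\bfv_1\|_\infty\le C_\eps\|\bfv_1\|_2\le C_\eps\kappa$ (the $\eps$-smoothing is exactly what makes $\Psi_\eps\bfv_1$ bounded and Lipschitz-controllable) and $\|(-\Delta_N)^{-1}\nabla\psi\|_2\le c\|\psi\|_{H^{-1}_{(0)}}$, yielding $\le c(\kappa,\eps)\|\psi\|_2\|\psi\|_{H^{-1}_{(0)}}\le \tfrac12\|\nabla\psi\|_2^2 + c(\kappa,\eps)\|\psi\|_{H^{-1}_{(0)}}^2$; (iv) absorb the $\|\nabla\psi\|_2^2$ terms, apply Gr\"onwall to get $\sup_{(0,t)}\|\psi\|_{H^{-1}_{(0)}}^2 \le c(\kappa,\eps,T)\,t\,\sup_{(0,t)}\|\bfv_1-\bfv_2\|_2^2$.

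It remains to upgrade from the $H^{-1}_{(0)}$-norm to the $L^2$-norm claimed in the statement. For this I would run a second energy estimate: test the $\psi$-equation with $-\Delta_N^{-1}\partial_t\psi$ or, more directly, test with $\psi$ and keep the good term $\|\Delta\psi\|_2^2$, now controlling the convective and $f'$ terms in terms of the already-bounded $H^{-1}_{(0)}$-quantity plus a small multiple of $\|\Delta\psi\|_2^2$; interpolating $\|\psi\|_2 \le c\|\psi\|_{H^{-1}_{(0)}}^{2/3}\|\Delta\psi\|_2^{1/3}$ and using the regularity $\varphi_i\in L^4(0,T;W^{2,2})$ to handle $\int(f'(\varphi_1)-f'(\varphi_2))\Delta\psi$, then Gr\"onwall once more gives $\sup_{(0,t)}\|\psi\|_{2}^2 + \int_0^t\|\Delta\psi\|_2^2 \le c(\kappa,\eps,T)\,t\,\sup_{(0,t)}\|\bfv_1-\bfv_2\|_2^2$, which is the assertion with $c(\kappa,\eps,T)T^{1/2}$ after taking square roots (the factor $t^{1/2}\le T^{1/2}$ and monotonicity of $c$ in $T$ are immediate from the Gr\"onwall constant). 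The only genuinely delicate point is step (iii): one must be careful that every constant arising from $\Psi_\eps$ (the $\eps$-dependent $L^2\to L^\infty$ and $L^2\to W^{2,2}$ bounds) and from the $L^\infty$-bounds on $\varphi_i$ depends only on $\kappa,\eps,T$ and not on the individual $\bfv_i$, which is guaranteed by working inside $M_\kappa$ and by the boundedness assertion of $\mathcal S$ in Theorem~\ref{thm:ExistenceCH}.
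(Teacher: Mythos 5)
Your final computation is essentially the paper's proof, but the reasoning that gets you there contains a sign error that sends you on an unnecessary detour. You claim that testing the difference equation with $\psi=\varphi_1-\varphi_2$ fails because ``$-\int_\Omega \Delta^2\psi\,\psi=-\int_\Omega|\Delta\psi|^2$ has the wrong sign.'' It does not: with $\partial_{\mathcal N}\psi|_{\partial\Omega}=0$ and $\partial_{\mathcal N}\Delta\psi|_{\partial\Omega}=0$ (the latter from $\partial_{\mathcal N}\mu=0$ together with $\partial_{\mathcal N}\varphi=0$), the term $-\int_\Omega\Delta^2\psi\,\psi=-\|\Delta\psi\|_2^2$ sits on the right-hand side of $\tfrac12\tfrac{\dd}{\dt}\|\psi\|_2^2=\dots$, i.e.\ it is \emph{dissipative} and moves to the left as $+\|\Delta\psi\|_2^2$. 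This is precisely what the paper exploits: it tests directly with $\varphi_1-\varphi_2$, obtains $\tfrac12\tfrac{\dd}{\dt}\|\psi\|_2^2+\|\Delta\psi\|_2^2=J_1+J_2$, handles the convective term by integrating by parts (using $\Div\Psi_\eps\bfv_i=0$) together with the uniform $L^\infty(0,T;C^1(\overline\Omega))$ bounds on $M_\kappa$, absorbs the $f'$-difference via $f'(\varphi_1)-f'(\varphi_2)=\int_0^1 f''(\cdot)\,\dd\theta\,\psi$ and Young's inequality, and concludes with Gr\"onwall. You in fact carry out exactly this computation in your ``upgrade'' step — tacitly conceding that the sign is fine after all — and that step is self-contained: it never actually needs the $H^{-1}_{(0)}$ bound you derived first.

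So the net effect is that your $H^{-1}_{(0)}$-energy estimate (steps (ii)--(iv)) is a valid but superfluous preliminary, motivated by a false premise; the proof you end with is the paper's. Two smaller points to tidy up if you keep the detour: the expression $\|(-\Delta_N)^{-1}\nabla\psi\|_2$ should read $\|\nabla(-\Delta_N)^{-1}\psi\|_2=\|\psi\|_{H^{-1}_{(0)}}$, and you should justify $\int_\Omega\psi\,\dx=0$ (it holds because $\Div\Psi_\eps\bfv_i=0$ with zero trace and $\partial_{\mathcal N}\mu_i=0$ conserve the mean of $\varphi_i$, and both solutions start from the same $\varphi_0$). I would recommend deleting the $H^{-1}_{(0)}$ stage entirely and presenting the direct $L^2$ test from the start.
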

\begin{proof}
Let $\bfv_1,\bfv_2\in M_\kappa$ be given. We set $\varphi_1=\mathcal S[\bfv_1]$ and $\varphi_2=\mathcal S[\bfv_2]$ such that Theorem \ref{thm:ExistenceCH} implies
\begin{align}\label{eq:varphi12}
\|\varphi_1\|_{L^\infty (\Q_T)}+\|\varphi_2\|_{L^\infty(\Q_T)}\leq\,c(\kappa).
\end{align}
First of all we have 
\begin{align*}
 \partial_t (\varphi_1-\varphi_2)+(\Psi_\eps \bfv_1)\cdot\nabla \varphi_1-(\Psi_\eps \bfv_2)\cdot\nabla \varphi_2=\Delta \big(f'(\varphi_1)-f'(\varphi_2)\big)-\Delta^2 (\varphi_1-\varphi_2).
\end{align*} 
Testing with $\varphi_1-\varphi_2$ we obtain
\begin{align*}
 \frac{1}{2}\frac{\dd}{\dt}\int_\Omega&|\varphi_1-\varphi_2|^2\dx+\int_\Omega|\Delta(\varphi_1-\varphi_2)|^2\dx\\&=-\int_\Omega\Big((\Psi_\eps \bfv_1)\cdot\nabla \varphi_1-(\Psi_\eps \bfv_2)\cdot\nabla \varphi_2\Big)\,(\varphi_1-\varphi_2)\dx\\&+\int_\Omega\big(f'(\varphi_1)-f'(\varphi_2)\big)\cdot\Delta(\varphi_1-\varphi_2)\dx=:J_1+J_2.
\end{align*}
 The first term we estimate via
\begin{align*}
J_1&=\int_\Omega\Big((\Psi_\eps \bfv_1)\,\varphi_1-(\Psi_\eps \bfv_2)\, \varphi_2\Big)\cdot\nabla(\varphi_1-\varphi_2)\dx\\
&\leq\,\int_\Omega\Big|(\Psi_\eps \bfv_1)\,\varphi_1-(\Psi_\eps \bfv_2)\, \varphi_2\Big|^2\dx+\int_\Omega |\nabla(\varphi_1-\varphi_2)|^2\dx\\
&\leq\,\int_\Omega\Big|(\Psi_\eps \bfv_1)\,\varphi_1-(\Psi_\eps \bfv_2)\, \varphi_2\Big|^2\dx+c(\delta)\int_\Omega |\varphi_1-\varphi_2|^2\dx+\delta\int_\Omega |\Delta(\varphi_1-\varphi_2)|^2\dx\\
&=:J_1^1+J_1^2+J_1^3,
\end{align*}
where $\delta>0$ is arbitrary.
On account of $\bfv_1\in M_\kappa$, \eqref{eq:varphi12} and the continuity of $\Psi_\varepsilon$ we have 
\begin{align*}
J_1^1&\leq 2\int_\Omega\big|(\Psi_\eps \bfv_1)\,(\varphi_1-\varphi_2)\big|^2\dx+2\int_\Omega|\Psi_\eps (\bfv_2-\bfv_1)\, \varphi_2\big|^2\dx\\
&\leq 2\|\Psi_\eps \bfv_1\|_\infty^2\int_\Omega\big|\varphi_1-\varphi_2\big|^2\dx+2\|\varphi_2\|_\infty\int_\Omega|\Psi_\eps (\bfv_2-\bfv_1)\big|^2\dx\\
&\leq \,c(\kappa,\varepsilon)\int_\Omega\big|\varphi_1-\varphi_2\big|^2\dx+c(\kappa)\int_\Omega|\bfv_2-\bfv_1\big|^2\dx.
\end{align*}
Moreover, we obtain by (A3) and Young's inequality
\begin{align*}
J_2&=\int_\Omega \int_0^1f''\big(\varphi_2+\theta(\varphi_1-\varphi_2)\big)\,\dd\theta(\varphi_1-\varphi_2)\,\Delta(\varphi_1-\varphi_2)\dx\\
&\leq \,c(\kappa)\int_\Omega |\varphi_1-\varphi_2||\Delta(\varphi_1-\varphi_2)|\dx\\
&\leq \,c(\delta,\kappa)\int_\Omega |\varphi_1-\varphi_2|^2\dx+\delta\int_\Omega|\Delta(\varphi_1-\varphi_2)|^2\dx
%&\leq\,\delta\int_\Omega |\Delta(\varphi_1-\varphi_2)|^2\dx+c(\delta)\int_\Omega |\varphi_1-\varphi_2|^2\dx
\end{align*}
Plugging everything together we have shown
\begin{align*}
 \frac{1}{2}\frac{\dd}{\dt} \int_\Omega&|\varphi_1-\varphi_2|^2\dx+\int_\Omega|\Delta(\varphi_1-\varphi_2)|^2\dx\\
%&\leq \,c(\kappa)\int_\Omega\big|\varphi_1-\varphi_2\big|^2\dx+c(\kappa)\int_\Omega\big|\nabla(\varphi_1-\varphi_2)\big|^2\dx+c(\kappa)\int_\Omega|\bfv_2-\bfv_1\big|^2\dx\\
&\leq \,c(\delta,\kappa,\varepsilon)\int_\Omega\big|\varphi_1-\varphi_2\big|^2\dx+2\delta \int_\Omega\big|\Delta(\varphi_1-\varphi_2)\big|^2\dx+c(\kappa)\int_\Omega|\bfv_2-\bfv_1\big|^2\dx,
\end{align*}
for any $\delta>0$. Now we choose $\delta$ small enough and integrate with respect to $t$ such that (note that $\varphi_1$ and $\varphi_2$ have the same initial datum)
\begin{align*}
\int_\Omega&|\varphi_1-\varphi_2|^2\dx+\int_0^T\int_\Omega|\Delta(\varphi_1-\varphi_2)|^2\dxt\\
&\leq \,c(\kappa,\varepsilon)\bigg(\int_0^T\int_\Omega\big|\varphi_1-\varphi_2\big|^2\dxt+\int_0^T\int_\Omega|\bfv_2-\bfv_1\big|^2\dxt\bigg).
\end{align*}
The claim follows by Gronwall's lemma.
\end{proof}

%\begin{theorem}\label{thm:ExistenceNS}
 % Let $T>0$ and $\varphi \in W^1_4(0,T;L^2(\Omega))\cap L^4(0,T;W^2_2(\Omega))$ be given. Then for every $\bfv_0\in L^2_\sigma(\Omega)$ there is $\bfv \in L^\infty(0,T;L^2_\sigma(\Omega))\cap L^p(0,T;W^{1,p}_0(\Omega))$, which solves \eqref{Approx1.1}$_1$-\eqref{Approx1.1}$_2$. Moreover, the mapping
 % \begin{equation*}
% W^1_4(0,T;L^2(\Omega))\cap L^4(0,T;W^2_2(\Omega))\ni\varphi\mapsto \bfv\in L^\infty(0,T;L^2_\sigma(\Omega))
 % \end{equation*}
 % is continuous.
%\end{theorem}
\begin{theorem}
  \label{thm:ExistenceNS}
  Let Assumptions (A1)--(A4) hold true and let $\varepsilon>0$, $\bfv_0\in L^2(\Omega)$. Then there is a unique solution $(\bfv,\varphi,\mu)$ to \eqref{Approx1.1}--\eqref{Approx1.1d} and \eqref{Approx1.2}$_1$--\eqref{Approx1.2}$_3$ with $\bfv\in L^{\infty}(0,T; L^2(\Omega)) \cap L^p(0,T;W^{1,p}_{0,\sigma}(\Omega))$, $\varphi \in W^{1,4}(0,T;L^2(\Omega))\cap L^4(0,T;W^{4,2}(\Omega))$, $\mu \in L^2(0,T;W^{1,2}(\Omega))$.
%\begin{align}
%\int_{\Q_T} &\varrho\bfv\cdot\partial_t\bfeta\dxt+\int_{\Q_T} \bfS(\varphi,\bfD\bfv):\bfD\bfeta\dxt\label{eq99}\\&=\int_{\Q_T}\varrho\bfv\otimes \Psi_{\varepsilon}(\bfv):\bfD\bfeta\dxt+\int_{\Q_T}\Psi_{\varepsilon}(\mu\nabla \varphi)\cdot\bfeta\dxt\nonumber\\&+\int_{\Q_T} R\tfrac{\bfv}{2}\cdot\bfeta\dxt+\int_{\Q_T}\bfJ\otimes\bfv :\bfD\bfeta\dxt+\int_\Omega \varrho_0\varphi_0\cdot\bfeta(0,\cdot)\dx\nonumber
%\end{align}
%  for all $\bfeta\in
 % C^\infty_{0,\divergence}([0,T)\times \Omega)$. Here we have $\bfJ=-\frac{\partial\varrho}{\partial\varphi}\nabla\mu$, $R=-\nabla\frac{\partial\varrho}{\partial\varphi}\nabla\mu$ and $\varrho=\varrho(\varphi)$.
\end{theorem}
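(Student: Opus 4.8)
\emph{Proof of Theorem~\ref{thm:ExistenceNS} (outline).} The plan is to combine a \emph{partial} Galerkin approximation for the generalised Navier--Stokes block of \eqref{Approx1.1}--\eqref{Approx1.1d} with the exact solution operator $\mathcal S$ of the Cahn--Hilliard block from Theorem~\ref{thm:ExistenceCH}, to solve the resulting coupled finite-dimensional system by a fixed-point argument based on Lemma~\ref{lem:S}, and to pass to the Galerkin limit using the \emph{a priori} energy identity of \eqref{Approx1.1}--\eqref{Approx1.1d} together with monotone operator theory (here $\varphi_0\in B^3_{2,4}(\Omega)$ with $\partial_{\mathcal N}\varphi_0|_{\partial\Omega}=0$ is understood to be given as in Theorem~\ref{thm:main}). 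Let $\{\bfw_k\}_{k\in\N}$ be the smooth eigenfunctions of the Stokes operator $A=-P_\sigma\Delta$; they form an orthonormal basis of $L^2_\sigma(\Omega)$ and a basis of $W^{1,p}_{0,\sigma}(\Omega)$. With $X_n=\Span\{\bfw_1,\dots,\bfw_n\}$, $P^n\colon L^2_\sigma(\Omega)\to X_n$ the orthogonal projection, and using $-\Psi_\eps\Div(\nabla\varphi\otimes\nabla\varphi)=\Psi_\eps(\mu\nabla\varphi)$ (since $P_\sigma$ annihilates gradients), we seek $\bfv_n\in W^{1,2}(0,T;X_n)$ with $\varphi_n=\mathcal S[\bfv_n]$, $\mu_n=f'(\varphi_n)-\Delta\varphi_n$, $\bfJ_n=-\varrho'(\varphi_n)\nabla\mu_n$, $R_n=-\varrho''(\varphi_n)\nabla\varphi_n\cdot\nabla\mu_n$, such that $\bfv_n(0)=P^n\bfv_0$ and, for $k=1,\dots,n$ and a.e.\ $t$,
\begin{align*}
&\int_\Omega\varrho(\varphi_n)\partial_t\bfv_n\cdot\bfw_k\dx+\int_\Omega\bigl(\varrho(\varphi_n)\Psi_\eps\bfv_n+\bfJ_n\bigr)\cdot\nabla\bfv_n\cdot\bfw_k\dx\\
&\qquad+\tfrac12\int_\Omega R_n\,\bfv_n\cdot\bfw_k\dx+\int_\Omega\bfS(\varphi_n,\bfD\bfv_n):\bfD\bfw_k\dx=\int_\Omega\mu_n\nabla\varphi_n\cdot\Psi_\eps\bfw_k\dx.
\end{align*}

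For fixed $n$ this is a functional ODE: with $\bfv_n=\sum_{k\le n}c_k\bfw_k$ the mass matrix $(\int_\Omega\varrho(\varphi_n)\bfw_j\cdot\bfw_k\dx)_{j,k}$ is symmetric and uniformly positive definite by (A3), while $\varphi_n$ depends on the whole history of $\bfv_n$ through $\mathcal S$. I would first freeze $\bfv^*$ in the closed ball $\{\bfu\in C([0,T^*];X_n):\sup_{[0,T^*]}\|\bfu\|_{2,\Omega}\le\kappa\}$, set $\varphi^*=\mathcal S[\bfv^*]$ (well defined by Theorem~\ref{thm:ExistenceCH}; by Remark~\ref{cor:ExistenceCH}, $\varphi^*\in L^\infty(0,T;C^1(\overline\Omega))$ and $\mu^*,\bfJ^*,R^*$ are bounded in suitable $L^4_t$-spaces with constants $c(\kappa,\eps)$), and solve the remaining ODE for $\bfv$ with $\varphi^*$ and the transport velocity $\Psi_\eps\bfv^*$ frozen. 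Carath\'eodory's theorem (coefficients $L^2$ in $t$, continuous in $\bfc$; uniqueness by strict monotonicity of $\bfS(\varphi^*,\cdot)$) together with Gronwall gives a unique $\bfv=:\mathcal T[\bfv^*]\in W^{1,2}(0,T^*;X_n)$. Testing with $\bfv$ and using the continuity equation $\partial_t\varrho(\varphi^*)+\Div(\varrho(\varphi^*)\Psi_\eps\bfv^*+\bfJ^*)=R^*$ — which is precisely why the term $R\bfv/2$ was inserted into \eqref{Approx1.1} — the time-derivative, transport and $R$-terms combine into $\tfrac12\tfrac{\dd}{\dt}\int_\Omega\varrho(\varphi^*)|\bfv|^2\dx$, so by (A2) one gets $\tfrac{\dd}{\dt}\int_\Omega\varrho(\varphi^*)|\bfv|^2\dx+2\omega\|\bfD\bfv\|_{p,\Omega}^p\le c(\kappa,\eps)(1+\int_\Omega\varrho(\varphi^*)|\bfv|^2\dx)$, whence $\mathcal T$ maps the ball into itself for $T^*=T^*(\kappa,\eps,n)$ small. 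Combining this with the factor $(T^*)^{1/2}$ from Lemma~\ref{lem:S} and the Lipschitz dependence of the finite-dimensional ODE on its data shows $\mathcal T$ is a contraction for $T^*$ small, and Banach's fixed-point theorem yields a solution of the coupled Galerkin system on $[0,T^*]$. Since \eqref{Approx1.1}--\eqref{Approx1.1d} enjoys the energy identity below, which bounds $\sup_{[0,T^*]}\|\bfv_n\|_{2,\Omega}$ in terms of the data only, the local solution extends to all of $[0,T]$ by the usual continuation argument (restarting the Cahn--Hilliard part at $t=T^*$ is admissible because $\varphi_n(T^*)\in B^3_{2,4}(\Omega)$ by Remark~\ref{cor:ExistenceCH}).

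The main work is the limit $n\to\infty$. Testing the Galerkin equations with $\bfv_n$ and the exactly solved equation \eqref{Approx1.1c}--\eqref{Approx1.1d} with $\mu_n$, the two coupling terms $\int_\Omega\mu_n\nabla\varphi_n\cdot\Psi_\eps\bfv_n\dx$ cancel and we obtain
\begin{align*}
&\frac{\dd}{\dt}\Bigl[\int_\Omega\tfrac12\varrho(\varphi_n)|\bfv_n|^2\dx+\int_\Omega\bigl(\tfrac12|\nabla\varphi_n|^2+f(\varphi_n)\bigr)\dx\Bigr]\\
&\qquad+\int_\Omega\bfS(\varphi_n,\bfD\bfv_n):\bfD\bfv_n\dx+\|\nabla\mu_n\|_{2,\Omega}^2=0.
\end{align*}
With (A1)--(A3), conservation of $\int_\Omega\varphi_n\dx$ and Poincar\'e's inequality this bounds $\bfv_n$ in $L^\infty(0,T;L^2_\sigma(\Omega))\cap L^p(0,T;W^{1,p}_{0,\sigma}(\Omega))$, $\varphi_n$ in $L^\infty(0,T;H^1(\Omega))$ and $\nabla\mu_n$ in $L^2(\Q_T)$, uniformly in $n$; feeding the $L^\infty L^2$-bound for $\bfv_n$ back into Theorem~\ref{thm:ExistenceCH} bounds $\varphi_n$ in $W^{1,4}(0,T;L^2(\Omega))\cap L^4(0,T;W^{4,2}(\Omega))$, hence $\mu_n$ in $L^4(0,T;W^{2,2})$ (in particular in $L^2(0,T;W^{1,2})$), $\bfJ_n$ in $L^4(0,T;W^{1,2})$ and $R_n$ in $L^4(0,T;L^2)$. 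Bounding $\partial_t(\varrho(\varphi_n)\bfv_n)$ from the conservative form \eqref{eq:35} of the Galerkin equations in a negative-order space and invoking the Aubin--Lions lemma for $\varrho(\varphi_n)\bfv_n$, together with the compact embedding of the Cahn--Hilliard regularity class into $C([0,T];C^1(\overline\Omega))$ (Remark~\ref{cor:ExistenceCH}), yields a subsequence with $\bfv_n\rightharpoonup\bfv$ (weakly-$*$ in $L^\infty L^2$, weakly in $L^pW^{1,p}$, strongly in $L^p(0,T;L^r(\Omega))$ for suitable $r$), $\varphi_n\to\varphi$ in $C([0,T];C^1(\overline\Omega))$, $\mu_n\rightharpoonup\mu$ in $L^4(0,T;W^{2,2})$, $\varrho(\varphi_n)\to\varrho(\varphi)$ uniformly, and $\Psi_\eps\bfv_n\to\Psi_\eps\bfv$ in, e.g., $L^2(0,T;L^\infty(\Omega))$. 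The Cahn--Hilliard block and all lower-order terms of \eqref{Approx1.1} then pass to the limit directly. The step I expect to be the main obstacle is the identification of the weak limit $\overline\bfS$ of the nonlinear stress $\bfS(\varphi_n,\bfD\bfv_n)$ with $\bfS(\varphi,\bfD\bfv)$: one first shows $\limsup_n\int_{\Q_T}\bfS(\varphi_n,\bfD\bfv_n):\bfD\bfv_n\dxt\le\int_{\Q_T}\overline\bfS:\bfD\bfv\dxt$ by exploiting the energy identity above (whose right-hand side vanishes, so no awkward term survives), weak lower semicontinuity and the convergence of the initial energies, and then applies Minty's monotonicity trick, using the Lipschitz dependence of $\bfS$ on $\varphi$ in (A2) and the uniform convergence $\varphi_n\to\varphi$. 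In contrast to the limit $\eps\to0$ in the proof of Theorem~\ref{thm:main}, no $L^\infty$-truncation is needed here, because the mollification $\Psi_\eps$ together with the boundedness and regularity of $\varrho(\varphi_n)$ provide control of $\partial_t(\varrho(\varphi_n)\bfv_n)$ and hence enough compactness of $\bfv_n$ to pass to the limit in the convective term by elementary means.

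For uniqueness, if $(\bfv_1,\varphi_1,\mu_1)$ and $(\bfv_2,\varphi_2,\mu_2)$ are two solutions then $\varphi_i=\mathcal S[\bfv_i]$, so Lemma~\ref{lem:S} bounds $\sup_{(0,t)}\|\varphi_1-\varphi_2\|_{2,\Omega}$ by $\sup_{(0,t)}\|\bfv_1-\bfv_2\|_{2,\Omega}$; testing the difference of the momentum equations with $\bfv_1-\bfv_2$, the stress difference splits into a nonnegative part (monotonicity of $\bfS(\varphi_1,\cdot)$) and a remainder controlled by the Lipschitz bound in (A2) and the available integrability of $\bfD\bfv_1,\bfD\bfv_2$, and a Gronwall argument for $\int_\Omega\varrho(\varphi_1)|\bfv_1-\bfv_2|^2\dx$ forces $\bfv_1=\bfv_2$, whence $\varphi_1=\varphi_2$ and $\mu_1=\mu_2$ by Theorem~\ref{thm:ExistenceCH}.
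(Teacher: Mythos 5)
Your existence construction is essentially the paper's: a partial Galerkin scheme for the momentum equation coupled to the exact Cahn--Hilliard solution operator $\mathcal S$, a local-in-time fixed point based on Lemma~\ref{lem:S} and the invertibility of the density-weighted mass matrix, the cancellation of the coupling terms in the energy identity, Aubin--Lions applied to (the projection of) $\varrho_n\bfv_n$ followed by the identity $\int|\sqrt{\varrho_n}\bfv_n|^2=\int P^n(\varrho_n\bfv_n)\cdot\bfv_n$ to upgrade to strong convergence of $\bfv_n$, and Minty's trick combined with the Lipschitz dependence of $\bfS$ on $\varphi$ from (A2). The only structural difference is the basis: the paper uses eigenfunctions of the $W^{l,2}_{0,\sigma}$-inner product with $l>\tfrac d2+1$ precisely so that $\nabla \mathcal P^N_l\bfeta\in L^\infty(\Omega)$, which is what makes the duality bound on $\partial_t\mathcal P^N_l(\varrho_N\bfv_N)$ against $\bfH_N\in L^{p'}$ work; with Stokes eigenfunctions you must instead invoke uniform boundedness of the spectral projections on $D(A^{l/2})\hookrightarrow W^{1,\infty}$, which is fine but should be said.

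The one genuine gap is the uniqueness argument, which as sketched does not close. After splitting the stress difference, monotonicity of $\bfS(\varphi_1,\cdot)$ only gives a \emph{sign} for $(\bfS(\varphi_1,\bfD\bfv_1)-\bfS(\varphi_1,\bfD\bfv_2)):\bfD(\bfv_1-\bfv_2)$, not coercivity in $\bfD(\bfv_1-\bfv_2)$; hence the remainder $\int|\varphi_1-\varphi_2|\,(1+|\bfD\bfv_2|^{p-1})\,|\bfD(\bfv_1-\bfv_2)|$ has no dissipative term to be absorbed into, and it cannot be controlled by $\sup_t\|\varphi_1-\varphi_2\|_{2}$ and $\int\varrho|\bfv_1-\bfv_2|^2$ alone. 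A similar problem arises in the convective terms when $p<2$, since $\bfv_1-\bfv_2$ then lives in $L^{p'}$ with $p'>2$ and interpolation again calls on the missing gradient control of the difference. You should either supply an argument that genuinely exploits a quantitative monotonicity of $\bfS$ (which (A2) does not provide for general $p$ in the admitted range), or flag that uniqueness is not established; note that the paper's own proof of Theorem~\ref{thm:ExistenceNS} silently omits uniqueness as well, and only existence of the approximate solutions is used in the proof of Theorem~\ref{thm:main}.
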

\begin{proof}
The aim is to find a function
$\bfv\in L^{\infty}(0,T; L^2_\sigma(\Omega)) \cap L^p(0,T;W^{1,p}_{0,\sigma}(\Omega))$ such that
\begin{align}
\int_{\Q_T} &\varrho\bfv\cdot\partial_t\bfeta\dxt+\int_{\Q_T} \bfS(\varphi,\bfD\bfv):\bfD\bfeta\dxt\label{eq99}\\&=\int_{\Q_T}\varrho\bfv\otimes \Psi_{\varepsilon}(\bfv):\nabla\bfeta\dxt+\int_{\Q_T}\Psi_{\varepsilon}\Div(\nabla \varphi\otimes\nabla\varphi)\cdot\bfeta\dxt\nonumber\\&+\int_{\Q_T} R\tfrac{\bfv}{2}\cdot\bfeta\dxt+\int_{\Q_T}\bfv\otimes\bfJ :\nabla\bfeta\dxt+\int_\Omega \varrho_0\varphi_0\cdot\bfeta(0,\cdot)\dx\nonumber
\end{align} for all $\bfeta\in C^\infty_{0,\sigma}([0,T)\times \Omega)$, where we abbreviated $\bfJ=-\frac{\partial\varrho}{\partial\varphi}\nabla\mu$, $R=-\nabla\frac{\partial\varrho}{\partial\varphi}\cdot\nabla\mu$ and $\varrho=\varrho(\varphi)$ and $\varphi$ is the solution of \eqref{Approx1.2}$_3$--\eqref{Approx1.2}$_4$ due to Theorem~\ref{thm:ExistenceCH} and $\mu$ is defined by \eqref{Approx1.2}$_4$. Note that we used \eqref{eq:35}.\\\

\textbf{Step 1: finite dimensional approximation.}\\
We separate space and time and approximate the corresponding Sobolev space by a finite dimensional subspace which leads to a system of ODEs (Galerkin Ansatz).
To solve this we follow the approach for compressible Navier--Stokes equations introduced in \cite[Section 2]{FeNoPe}. From \cite[Appendix]{MNRR}  we infer the existence of a sequence $(\lambda_k)\subset\R$ and a sequence of functions $(\bfw_k)\subset W_{0,\sigma}^{l,2}(\Omega)$, $l\in\N$, such that
\begin{itemize}
\item[i)] $\bfw_k$ is an eigenvector to the eigenvalue $\lambda_k$ in the sense that:
$$\langle \bfw_k,\bfeta\rangle_{W_0^{l,2}(\Omega)}= \lambda_k\int_\Omega\bfw_k\cdot\bfeta\dx\quad\text{for all}\quad\bfeta\in W_{0,\sigma}^{l,2}(\Omega),$$
\item[ii)] $\int_{\Omega}\bfw_k\bfw_m\dx=\delta_{km}$ for all $k,m\in\mathbb{N}$,
\item[iii)] $1\leq\lambda_1\leq \lambda_2\leq...$ and $\lambda_k\rightarrow\infty$,
\item[iv)] $\langle\tfrac{\bfw_k}{\sqrt{\lambda_k}},\tfrac{\bfw_m}{\sqrt{\lambda_m}}\rangle_{W_0^{l,2}(\Omega)}=\delta_{km}$ for all $k,m\in\mathbb{N}$,
\item[v)] $(\bfw_k)_{k\in\N}$ is a basis of $W_{0,\sigma}^{l,2}(\Omega)$.
\end{itemize}
We choose $l>\frac{d}{2}+1$ such that $W^{l,2}_0(\Omega)\hookrightarrow W^{1,\infty}(\Omega)$
We are looking for an approximated solution $\bfv^N$ of the form
\begin{align*}
\bfv_N=\sum_{k=1}^N c_k^N\bfw_k
\end{align*}
where $\bfC^N=(c_k^N)_{k=1}^N:(0,T)\rightarrow \R^N$. So we have for a.e. $t\in(0,T)$ 
\begin{align*}
\bfv_N(t)\in X_N:=\mathrm{span}\set{\bfw_1,...,\bfw_N}.
\end{align*}
We will construct $\bfC^N$ such that $\bfv^N$ is a solution to (here we denote $\varrho_0=\varrho(\varphi_0)$)
\begin{align}
\int_\Omega\varrho(\varphi_N(t)) \bfv_N(t)\cdot\bfw_k\dx&=\int_\Omega\varrho_0 \bfv_0\cdot\bfw_k\dx+\int_{\Q_t}\bfv_N\otimes \varrho(\varphi_N)(\Psi_\eps\bfv_N+\bfJ):\nabla\bfw_k\dxs\nonumber\\\label{Gal1}&+ \int_{\Q_t} R_N\frac{\bfv_N}2\cdot\bfw_k\dxs-\int_{\Q_t} \bfS(\varphi_N,\bfD\bfv_N):\bfD\bfw_k\dxs\\\nonumber&-\int_{\Q_t}\Psi_\eps\Div \big(\nabla \varphi_N\otimes\nabla \varphi_N\big)\cdot\bfw_k\dxs \quad \text{for all}\quad t\in [0,T],
\end{align}
for $k=1,...,N$,
where 
\begin{align*}
\varphi_N&=\mathcal S[\bfv_N],\quad \bfJ_N=-\frac{\partial\varrho}{\partial\varphi}(\varphi_N)\nabla\mu_N, \quad R_N=-\nabla\frac{\partial\varrho}{\partial\varphi}(\varphi_N)\cdot\nabla\mu_N
\end{align*}
with $\mu_N=f'(\varphi_N)-\Delta \varphi_N$.
Here $\mathcal S$ denotes the solution operator from Theorem \ref{thm:ExistenceCH}.
%Here $\mathcal P^N$ is the $L^2(G)$-orthogonal projection into $\mathcal X_N:=\Span\left\{\bfw_1,...,\bfw_N\right\}$, i.e.
%\begin{align*}
%\mathcal P^N(\bfu)&:=\sum\limits_{k=1}^N\left(\int_G \bfw_k\cdot \bfu\dx\right)\bfw_k.
%\end{align*}
We introduce the operator $\mathcal M_N[\varrho]:X_N\rightarrow X_N'$ with
$$\langle\mathcal M_N[\varrho]\bfu,\bfw\rangle=\int_\Omega \varrho\bfu\cdot\bfw\dx\quad \text{for all}\quad \bfu,\bfw\in X_N.$$
As $\varrho(\varphi)$ is bounded from below by some $\underline\varrho>0$ the operator $\mathcal M_N[\varrho]$ is invertible and we have
\begin{align}\label{eq:M}
\|\mathcal M_N[\varrho^1]^{-1}-\mathcal M_N[\varrho^2]^{-1}\|_{\mathcal L(X_N';X_N)}\leq\,\,c(N,\underline\varrho)\|\varrho^1-\varrho^2\|_{1}.
\end{align}
So equation (\ref{Gal1}) is equivalent to
\begin{align}\label{Gal2}
\bfv_N(t)&=\mathcal M_N\big[\varrho(\mathcal S[\bfv_N])\big]^{-1}\bigg((\varrho_0\bfv_0)'+\int_0^t\mathcal N[S[\bfv_N],\bfv_N]\ds\bigg),
\end{align}
Here $(\varrho_0\bfv_0)'$ denotes the unique $X_N'$-representative of $\varrho_0\bfv_0$ and we abbreviated
\begin{align*}
\langle\mathcal N[\varphi,\bfv_N],\bfw\rangle&=\int_\Omega\bfv_N\otimes (\varrho(\varphi)(\Psi_\eps\bfv_N)+\bfJ)):\nabla\bfw\dx+\int_\Omega R\frac{\bfv_N}2\cdot\bfw\dx\\&-\int_\Omega \bfS(\varphi,\bfD\bfv_N):\bfD\bfw\dxs-\int_\Omega\Psi_\eps\Div \big(\nabla \varphi\otimes\nabla \varphi\big)\cdot\bfw\dx,\text{ where}\\
\bfJ&=-\frac{\partial\varrho}{\partial\varphi}(\varphi)\nabla\mu, \quad R=-\nabla\frac{\partial\varrho}{\partial\varphi}(\varphi)\cdot\nabla\mu,\quad
\mu=f'(\varphi)-\Delta \varphi.
\end{align*}
As the space $X_N$ is of finite dimension it is not hard to see that a local solution $\bfv_N\in C([0,T_N];X_N)$ to \eqref{Gal2} can be found by a standard fixed point
argument provided $T_N$ is small enough (recall \eqref{eq:M} and Lemma \ref{lem:S}). Then $\mathcal P_l^N (\rho_N\bfv_N)\in C^1([0,T_N];X_N)$ as a consequence of \eqref{Gal1}. Here $P_l^N$ denotes the orthogonal projection
$W^{l,2}_{0,\sigma}(\Omega)\rightarrow X_N$.
In order to gain a global solution on $[0,T]$ we have to show that $\bfv_N$ stays bounded in $X_N$.
We differentiate \eqref{Gal1} in time and use  $\bfv_N$ as a test-function. Moreover, we use $\varphi_N=\mathcal S[\bfv_N]$ as a test-function in \eqref{Approx1.1c} and $\partial_t\varphi_N$ in \eqref{Approx1.1d}. Then we obtain by (A2)
\begin{align}\label{eq:aprioriN}
\begin{aligned}
\sup_{0\leq t\leq T_N} \bigg(\int_\Omega \varrho_N&|\bfv_N|^2+\int_\Omega\frac{|\nabla\varphi_N|^2}{2}\dx+\int_\Omega f(\varphi_N)\dx\bigg)\\&+\int_0^{T_N}\int_\Omega|\bfD\bfv_N|^p\dx\dt+\int_0^{T_N}\int_\Omega |\nabla \mu_N|^2\dx\dt\leq C(\bfv_0,\varphi_0),
\end{aligned}
\end{align}
where $C(\bfv_0,\varphi_0)$ does neither depend on $T_N$ nor on $N$. Hence we gain a global solution in time, i.e. $T_N=T$ and uniform a priori estimates in $N$. In fact, as $\varrho$
is strictly positive (uniformly in $N$) we have
%\begin{align}\label{Gal1}
%\begin{aligned}
%\int_\Omega|\bfv_N|^2\dx&+\int_0^t\int_\Omega |\bfD\bfv_N|^p\dxs\leq\,c\,\big((I)+(II)+(III)+(IV)\big),\\ 
%(I)&=\int_\Omega\varrho_0 \bfv_0\cdot\bfv_N\dx,\quad (II)=\int_0^t\int_\Omega R|\bfv_N|^2\dxs,\\
%(III)&=\int_0^t\int_\Omega\bfv_N\otimes (\varrho(\Psi_\eps\bfv_N)+\bfJ)):\nabla\bfv_N\dxs,\\
%(IV)&=\int_0^t\int_\Omega\Psi_\eps\Div \big(\nabla \varphi\otimes\nabla \varphi\big)\cdot\bfv_N\dxs.
%\end{aligned}
%\end{align}
\begin{align}\label{apriori1}
&\sup_{t\in(0,T)}\int_\Omega |\bfv^N(t,\cdot)|^2\dx+\int_{\Q_T}|\nabla \bfv^N|^p\dxt\leq\,C
\end{align}
for some $C>0$,
where we also used Korn's inequality. By Theorem \ref{thm:ExistenceCH} and Remark \ref{cor:ExistenceCH} there holds
\begin{align}\label{apriori2}
\varphi_N\in L^4(0,T; W^{4,2}(\Omega))\cap W^{1,4}(0,T; L^2(\Omega))
\hookrightarrow L^\infty(0,T;C^1(\overline\Omega))
\end{align}
as well as
\begin{align}\label{apriori30}
\mu_N\in L^4(0,T;W^{2,2}(\Omega)),
\end{align}
both uniformly in $N$.
For the last statement we used equation \eqref{Approx1.1c}
together with \eqref{apriori2} and $\Psi_\varepsilon\bfv_N\cdot\nabla \varphi_N\in L^\infty({\Q_T})$ (combining \eqref{apriori1} and $\Psi_\eps \colon L^2_\sigma(\Omega)\to H^2(\Omega)^d\hookrightarrow L^\infty(\Omega)^d$).\\\

\textbf{Step 3: weak convergence.}\\
On Passing to a subsequence we gain from \eqref{apriori1}--\eqref{apriori30}
\begin{align}
\bfv_N&\rightharpoonup \bfv\quad\text{in}\quad  L^p(0,T;W^{1,p}_{0,\sigma}(\Omega)),\label{konv1}\\
\bfv_N&\rightharpoonup^{*} \bfv\quad\text{in}\quad L^{\infty}(0,T;L^2(\Omega)),\label{konv2}\\
\varphi_N&\rightharpoonup \varphi\quad\text{in}\quad L^4(0,T;W^{4,2}(\Omega)),\label{eq:creg2a}\\
\varphi_N&\rightharpoonup \varphi\quad\text{in}\quad W^{1,4}(0,T;L^2(\Omega)),\label{eq:creg2b}\\
\mu_N&\rightharpoonup \mu\quad\text{in}\quad L^4(0,T;W^{2,2}(\Omega))\label{eq:creg3}
\end{align}
as $N\to\infty$.
The convergences of $\varphi_N$ above imply in particular
\begin{align}
\nabla \varphi_N&\rightharpoonup^\ast \nabla \varphi\quad\text{in}\quad L^\infty({\Q_T}),\label{eq:creg1}\\
\varphi_N&\rightarrow \varphi\quad\text{in}\quad L^q(0,T;C^1(\overline{\Omega})),\label{eq:creg2}\\
\bfJ_N&\rightarrow \bfJ \quad\text{in}\quad L^4({\Q_T}),\label{eq:cregneu}\\
R_N&\rightarrow R \quad\text{in}\quad L^4({\Q_T})\label{eq:cregneu2}
\end{align}
as $N\to \infty$,
where $q<\infty$ is arbitrary. 
Hence we can pass to the limit in the Cahn-Hilliard equation and obtain
\begin{align*}
      \partial_t \varphi+(\Psi_\eps \bfv)\cdot\nabla \varphi=\Delta \mu\qquad\text{a.e. in}\quad  \Q_T
\end{align*}
for $\mu=f'(\varphi)-\Delta \varphi$ together with 
\begin{align*}%\label{Approx1.2'}
\left\{\begin{array}{rc}
\partial_{\mathcal N} \varphi=\partial_{\mathcal N}\mu=0&\qquad\qquad \mbox{ \,on $\partial \Omega\times(0,T)$},\\
\varphi(0,\cdot)=\varphi_0&\mbox{ \,in $\Omega$.}\end{array}\right.
\end{align*}
In order to pass to the limit in the convective term we need compactness of $\bfv_N$ in $L^2(\Q_T)$.
We obtain from (\ref{Gal1})
\begin{align*}
&\int_{\Q_T} \partial_t\mathcal P_l^N(\varrho_N\bfv_N) \cdot\bfeta\dx=\int_{\Q_T} \partial_t(\varrho_N\bfv_N) \cdot \mathcal P_l^N\bfeta\dx\\&=-\int_{\Q_T} \bfS(\varphi_N,\bfD\bfv_N):\bfD\mathcal P_l^N\bfeta\dxt+\int_{\Q_T}\varrho_N\bfv_N\otimes \Psi_{\varepsilon}(\bfv_N):\nabla\mathcal P_l^N\bfeta\dxt\\&+\int_{\Q_T}\Psi_{\varepsilon}(\mu_N\nabla \varphi_N)\cdot\mathcal P_l^N\bfeta\dxt+\int_{\Q_T} R_N\tfrac{\bfv_N}{2}\cdot\mathcal P_l^N\bfeta\dxt+\int_{\Q_T}\bfv_N\otimes\bfJ_N :\nabla\mathcal P_l^N\bfeta\dxt\\
&=-\int_{\Q_T} \bfS(\varphi_N,\bfD\bfv_N):\nabla\mathcal P_l^N\bfeta\dxt+\int_{\Q_T}\varrho_N\bfv_N\otimes \Psi_{\varepsilon}(\bfv_N):\nabla\mathcal P_l^N\bfeta\dxt\\&-\int_{\Q_T}\nabla\Delta_D^{-1}\Psi_{\varepsilon}(\mu_N\nabla \varphi_N):\nabla\mathcal P_l^N\bfeta\dxt-\int_{\Q_T} \nabla\Delta_D^{-1}(R_N\tfrac{\bfv_N}{2}):\nabla\mathcal P_l^N\bfeta\dxt\\&+\int_{\Q_T}\bfv_N\otimes\bfJ_N :\nabla\mathcal P_l^N\bfeta\dxt=:\int_{\Q_T} \bfH_N:\nabla\mathcal P_l^N\bfeta\dxt
\end{align*}
for all $\bfeta \in L^p(0,T;W^{l,2}_{0,\sigma}(\Omega))$. Here $\mathcal P_l^N$ denotes the orthogonal projection into $X_N$ with respect to the $W_0^{l,2}(\Omega)$ inner product. Note that due to the choice of $\bfw_1,...,\bfw_N$ the projection $\mathcal P_l^N$ is orthogonal with respect to the $L^2(\Omega)$ inner product as well. Moreover, the operator $\Delta_D^{-1}$ is the solution operator to the Laplace equation on $\Omega$ with respect to zero Dirichlet boundary values.
On account of \eqref{apriori1}--\eqref{apriori3} we have uniformly in $N$
\begin{align}
\label{apriori4}\varrho_N\bfv_N\otimes \Psi_{\varepsilon}(\bfv_N)&\in L^\infty(0,T;L^2(\Omega)),\\
\label{apriori5}\Psi_\varepsilon(\mu_N\nabla\varphi_N)&\in L^4(0,T;L^ \infty(\Omega)),\\
\label{apriori6}R_N,\bfJ_N&\in L^4(0,T;L^{q(d)}(\Omega)),
\end{align}
where $q(d)=\frac{2d}{d-2}$ if $d\geq3$ and $q(d)<\infty$ arbitrary if $d=2$.
Here we also used the properties of $\Psi_\varepsilon$. So we have uniformly in $N$
\begin{align}\label{eq:HN}
\bfH_N \in L^{p'}(\Q_T)
\end{align}
using the properties of $\Delta_D^{-1}$ and $p>\frac{2d+2}{d+2}$.\\\

\textbf{Step 3: strong convergence.}\\
We get on account of \eqref{eq:HN} and Sobolev's embedding (recall the choice of $l$)
\begin{align}
\label{apriori3}
\begin{aligned}
&\|\partial_t\mathcal P_l^N(\varrho_N\bfv_N)\|_{L^{p'}(W^{-l,2}_{\sigma}(\Omega))}=\|\partial_t\mathcal P_l^N(\varrho_N\bfv_N)\|_{L^{p}(0,T;W^{l,2}_{0,\sigma}(\Omega))^{*}}\\
&=\sup_{\|\bfvarphi\|_{L^{p}(0,T;W^{l,2}_{0,\sigma}(\Omega))\leq 1}}\int_{\Q_T} \partial_t(\varrho_N\bfv_N)\cdot\mathcal P_l^N\bfvarphi\dxt\\
&=\sup_{\|\bfvarphi\|_{L^{p}(0,T;W^{l,2}_{0,\sigma}(\Omega))\leq 1}}\int_{\Q_T} \bfH_N: \nabla P_l^N\bfvarphi\dxt\\
&\leq\sup_{\|\bfvarphi\|_{L^{p}(0,T;W^{l,2}_{0,\sigma}(\Omega))\leq 1}}\bigg(\int_{Q_T} |\bfH_N|^{p'}\dxt\bigg)^{\frac{1}{p'}}\bigg(\int_{Q_T} |\nabla P_l^N\bfvarphi|^{p}\dxt\bigg)^{\frac{1}{p}}\\
&\leq c \sup_{\|\bfvarphi\|_{L^{p}(0,T;W^{l,2}_{0,\sigma}(\Omega))\leq 1}}\|\nabla\mathcal P_l^N\bfvarphi\|_{L^{p'}(0,T;L^{\infty}(\Omega))}\leq c.
\end{aligned}
\end{align}
Combining \eqref{konv1}, \eqref{eq:creg2} and \eqref{apriori3} with the Aubin-Lions compactness Theorem shows 
$P_l^N(\varrho_N\bfv_N)\rightarrow P_\sigma(\varrho\bfv)$ in $L^{2}(0,T;L^{2}(\Omega))$. Note that we have the pointwise convergence $\mathcal P_l^N\rightarrow P_\sigma$ in $L^2(\Omega)$.
Due to this and \eqref{konv2} we have
\begin{align*} 
\int_{\Q_T} |\sqrt{\varrho_N}\bfv_N|^2\dxt&=\int_{\Q_T} \mathcal P_l^N(\varrho_N\bfv_N)\cdot\bfv_N\dxt\to \int_{\Q_T} P_\sigma(\varrho\bfv)\cdot\bfv\dxt\\&=\int_{\Q_T} |\sqrt{\varrho}\bfv|^2\dxt
\end{align*}
for $N\rightarrow\infty$. This means that $\sqrt{\varrho_N}\bfv_N$ converges strongly in $L^2({\Q_T})$. As $\varrho_N$ is strictly positive we have compactness of $\bfv_N$ as well and hence for $N\to\infty$
\begin{align}\label{eq:convvN}
\bfv_N&\rightarrow \bfv \quad\text{in}\quad L^{s}({\Q_T})\quad \text{for all}\quad s<p\frac{d+2}{d},\\
\bfv_N&\rightarrow \bfv \quad\text{in}\quad L^{q}(0,T;L^2(\Omega))\quad \text{for all}\quad q<\infty.\label{eq:convvN2}
\end{align}
Here we have used that $L^\infty(0,T;L^2(\Omega))\cap L^p(0,T;W^1_p(\Omega))\hookrightarrow L^{p \frac{d+2}d}(\Q_T)$.
This and \eqref{eq:creg2} yield together with the continuity of $\Psi_\varepsilon$
\begin{align}
\sqrt{\varrho_N}\bfv_N&\rightarrow \sqrt{\varrho}\bfv\quad\text{in}\quad L^{s}({\Q_T}),\label{konv30}\\
\varrho_N\bfv_N\otimes\Psi_\varepsilon\bfv_N&\rightarrow \varrho\bfv\otimes\Psi_\varepsilon\bfv\quad\text{in}\quad L^{s}({\Q_T}),\label{konv3}
\end{align}
for all $s<\tfrac{d+2}{d}p$.
Due to \eqref{apriori1} and (A2) we know that $\bfS(\varphi_N,\bfD(\bfv_N))$ is bounded in $L^{p'}(\Omega)$ thus
\begin{align}
\bfS(\varphi_N,\bfD(\bfv_N))\rightharpoonup\tilde{\bfS} \quad\text{in}\quad L^{p'}(Q).\label{konv4}
\end{align}
Finally we can pass to the limit in \eqref{Gal1} such that
\begin{align}\label{eq99'}
\begin{aligned}
\int_\Omega\varrho(\varphi(t)) \bfv(t)\cdot\bfeta\dx&=\int_\Omega\varrho_0 \bfv_0\cdot\bfeta\dx+\int_0^t\int_\Omega\bfv\otimes \varrho(\Psi_\eps\bfv+\bfJ):\nabla\bfeta\dx\ds\\&+ \int_0^t\int_\Omega R\tfrac{\bfv}2\cdot\bfeta\dx\ds-\int_0^t\int_\Omega \tilde\bfS:\bfD\bfeta\dx\ds\\&-\int_0^t\int_\Omega\Psi_\eps\Div \big(\nabla \varphi\otimes\nabla \varphi\big)\cdot\bfeta\dx\ds
\end{aligned}
\end{align}
  for all $\bfeta\in C^\infty_{0,\sigma}(\Omega)$ and $t\in (0,T)$. 
%\iffalse
This implies the following equation for $\partial_t(\varrho\bfv)$ (using continuity of $\Psi_\varepsilon$)
  \begin{align}\label{eq99''}
\begin{aligned}
-\int_{\Q_T}\varrho \bfv\cdot\partial_t\bfeta\dxt&=\int_\Omega\varrho_0 \bfv_0\cdot\bfeta(0)\dx+\int_{\Q_T}\bfv\otimes \varrho(\Psi_\eps\bfv+\bfJ):\nabla\bfeta\dxt\\&+ \int_{\Q_T} R\tfrac{\bfv}2\cdot\bfeta\dxt-\int_{\Q_T} \tilde\bfS:\bfD\bfeta\dxt\\&-\int_{\Q_T}\Psi_\eps\Div \big(\nabla \varphi\otimes\nabla \varphi\big)\cdot\bfeta\dxt
\end{aligned}
\end{align}
valid for $\bfeta\in C^\infty_{0,\sigma}([0,T)\times \Omega)$. By density argument this yields
\begin{align*}
\partial_t(\varrho\bfv)\in L^{p'}(0,T;W^{-1,p'}_{\sigma}(\Omega)).
\end{align*}
%\fi
\textbf{Step 4: monotone operator theory.}\\
  We apply monotone operator theory to show 
\begin{align}
\label{eq:SSS}\tilde\bfS=\bfS(\varphi,\bfD\bfv).
\end{align}
To this end we have to study the term
  \begin{align*}
  \int_{\Q_t}&\Big(\bfS(\varphi_N,\bfD\bfv_N)-\bfS(\varphi,\bfD\bfv)\Big):\bfD\big(\bfv_N-\bfv\big)\dxs\\&=\int_{\Q_t}\Big(\tilde{\bfS}-\bfS(\varphi_N,\bfD\bfv_N)\Big):\bfD\bfv\dxs
  -\int_{\Q_t}\bfS(\varphi,\bfD\bfv):\bfD\big(\bfv_N-\bfv\big)\dxs\\
  &+\int_{\Q_t}\bfS(\varphi_N,\bfD\bfv_N):\bfD\bfv_N\dxt-\int_{\Q_t}\tilde{\bfS}:\bfD\bfv\dxs.
  \end{align*}
  We get from \eqref{konv1} and \eqref{konv4}
   \begin{align*}
&\int_{\Q_t}\Big(\tilde{\bfS}-\bfS(\varphi_N,\bfD\bfv_N)\Big):\bfD\bfv\dxs\longrightarrow0,\quad N\rightarrow\infty,\\
&\int_{\Q_t}\bfS(\varphi,\bfD\bfv):\bfD\big(\bfv_N-\bfv\big)\dxs\longrightarrow0,\quad N\rightarrow\infty
  \end{align*}
for all $t\in (0,T)$. Moreover, by \eqref{Gal1} (differentiated with respect to  $t$) and \eqref{eq99''} there holds
    \begin{align*}
\int_{\Q_t}&\bfS(\varphi_N,\bfD\bfv_N):\bfD\bfv_N\dxs-\int_{\Q_t}\tilde{\bfS}:\bfD\bfv\dxs\\
&=  \frac12\int_{\Omega}\varrho_N(0)|\bfv_N(0)|^2\dx-\frac12\int_{\Omega}\varrho_0|\bfv_0|^2\dx+\int_{\Q_t} \Big(R_N\tfrac{|\bfv_N|^2}2-R\tfrac{|\bfv|^2}2\Big)\cdot\bfeta\dxt\\
%&+\int_{\Q_t}\bfv_N\otimes (\varrho(\Psi_\eps\bfv_N)+\bfJ_N)):\nabla\bfv_N\dxs-\int_{\Q_t}\bfv\otimes (\varrho(\Psi_\eps\bfv)+\bfJ)):\nabla\bfv\dxs\\&- \int_{\Q_t} R_N\frac{|\bfv_N|^2}2\dxs+\int_{\Q_t} R\frac{|\bfv|^2}2\dxs\\
&-\int_{\Q_t}\Psi_\eps\Div \big(\nabla \varphi_N\otimes\nabla \varphi_N\big)\cdot\bfv_N\dxs+\int_{\Q_t}\Psi_\eps\Div \big(\nabla \varphi\otimes\nabla \varphi\big)\cdot\bfv\dxs\\
      &-\frac12\int_{\Omega}\varrho_N(t)|\bfv_N(t)|^2\dx+\frac12\int_{\Omega}\varrho(t)|\bfv(t)|^2\dx\\
      &=:(O)+(I)+(II)+(III).
  \end{align*}
The first term can be treated using
\begin{align*}
\int_\Omega\varrho_N(0)|\bfv_N(0)|^2\dx=\int_\Omega \mathcal P^N_l(\varrho_0\bfv_0)\cdot\mathcal M_N[\varrho_0]^{-1}(\mathcal P^N_l(\varrho_0\bfv_0)')\dx\longrightarrow\int_\Omega\varrho_0|\bfv_0|^2\dx,\quad N\rightarrow\infty.
\end{align*}
Here we used strong convergence of $\mathcal P^N_l(\varrho_0\bfv_0)$ and pointwise convergence of the operator $\mathcal M_N^{-1}[\varrho_0]$. So
  \begin{align*}
  \lim_{N\rightarrow\infty}(O)=0.
  \end{align*}
On account of \eqref{eq:cregneu2} and \eqref{eq:convvN} there holds
  \begin{align*}
  \lim_{N\rightarrow\infty}(I)=0.
  \end{align*}
  We deduce from \eqref{konv1}, \eqref{eq:creg2} and continuity of $\Psi_\varepsilon$ %, \eqref{eq:cregneu}, \eqref{eq:cregneu2}, \eqref{konv3} and (\ref{apriori4})--(\ref{apriori6}) 
  that
  \begin{align*}
  \lim_{N\rightarrow\infty}(II)=0.%=\lim_{N\rightarrow\infty}(II)=\lim_{N\rightarrow\infty}(III)=0.
  \end{align*}
  Finally, we obtain by \eqref{konv30} for a.e. $t\in(0,T)$
 % \begin{align*}
 % (IV)&=\frac{1}{2}\int_\Omega\varrho(T)|\bfv(T)|^2\dx-\frac{1}{2}\int_\Omega\varrho_N(T)|\bfv_N(T)|^2\dx\\
%&-\frac{1}{2}\int_\Omega\varrho_0|\bfv_0|^2\dx+\frac{1}{2}\int_\Omega\varrho_0|\bfv_N(0)|^2\dx\\
%&+\frac{1}{2}\int_{\Q_T} \partial_t\varrho\,|\bfv|^2\dxt-\frac{1}{2}\int_{\Q_T} \partial_t\varrho_N\,|\bfv_N|^2\dxt.
 % \end{align*}
%By \eqref{apriori2} and \eqref{eq:convvN2} the last integral converges to zero, 
%So we have
  \begin{align*}
\lim_{N\rightarrow\infty}(III)=-\frac12\lim_{N\rightarrow\infty}\int_{\Omega}\varrho_N(t)|\bfv_N(t)|^2\dx+\frac12\int_{\Omega}\varrho(t)|\bfv(t)|^2\dx=0
  \end{align*}
at least after taking a subsequence.
  Hence we end up with
    \begin{align*}
  \int_{\Q_t}&\Big(\bfS(\varphi_N,\bfD\bfv_N)-\bfS(\varphi,\bfD\bfv)\Big):\bfD\big(\bfv_N-\bfv\big)\dxt\longrightarrow0,\quad N\rightarrow\infty
  \end{align*}
for almost every $t\in (0,T)$.
We have by assumption (A2) for any $\vartheta\in\big(\frac{1}{2},1\big)$
\begin{align*}
\int_{\Q_t} &\Big|\big(\bfS(\varphi_N,\bfD\bfv_N)-\bfS(\varphi,\bfD\bfv_N)\big):\bfD\big(\bfv_N-\bfv\big)\Big|^\vartheta\dxs\\&\leq\,C\,\int_{\Q_t} |\varphi_N-\varphi|^\vartheta(1+|\bfD\bfv_N|^{p-1})^\vartheta\big(|\bfD\bfv_N|+|\bfD\bfv|\big)^\vartheta\dxs\\
&\leq\,C\,\bigg(\int_{\Q_t}|\varphi_N-\varphi|^{\frac{\vartheta}{1-\vartheta}}\dxs\bigg)^{1-\vartheta}\bigg(\int_{\Q_t} (1+|\bfD\bfv_N|^{p}+|\bfD\bfv|^p)\dxs\bigg)^\vartheta
\longrightarrow0
\end{align*}
for $N\rightarrow\infty$, where we used \eqref{eq:creg2} and \eqref{apriori1}. This finally implies using monotonicity of $\bfS$
\begin{align*}
\int_{\Q_t} \Big(\big(\bfS(\varphi,\bfD\bfv_N)-\bfS(\varphi,\bfD\bfv)\big):\bfD\big(\bfv_N-\bfv\big)\Big)^\vartheta\dxs
&\longrightarrow0,\quad N\rightarrow\infty
\end{align*}
for almost every $t\in (0,T)$.
By  monotonicity of $\bfS$ (which follows from (A2)) we obtain \eqref{eq:SSS}.
\end{proof}

\section{Proof of the Main Result}\label{sec:MainResult}
\label{sec:main}

% We study the approximated system for $n\in \N$
% \begin{align}\label{1.1}
% \left\{\begin{array}{rc}
% \partial_t (\varrho\bfv)-\Div \bfS(\varphi,\bfD\bfv)+\Div(\Psi_{1/n}(\bfv)\otimes\varrho\bfv+\bfJ\otimes\bfv))=\nabla \pi+R\,\frac{\bfv}{2}+\Psi_{1/n}\big(\mu\nabla\varphi\big)&,\\
% \Div \bfv=0\qquad\qquad\,\,\,\,& ,\\
% \partial_t \varphi+\Psi_{1/n}(\bfv)\cdot\nabla \varphi=\Delta \mu\qquad\,\,\,\,\quad\quad&,\\
% \mu=f'(\varphi)-\Delta \varphi\,\qquad\qquad&,\end{array}\right.
% \end{align}
% where $\bfJ=-\frac{\partial\varrho}{\partial\varphi}\nabla\mu$, $R=-\nabla\frac{\partial\varrho}{\partial\varphi}(\varphi)\cdot\nabla\mu$ and $\varrho=\varrho(\varphi)$.
% The system (\ref{1.1}) is completed by 
% \begin{align}\label{1.2}
% \left\{\begin{array}{rc}
% \bfv=0\qquad\,\,\,\,\quad\quad& \mbox{ \,on $\partial \Omega$,}\\
% \partial_{\mathcal N} \varphi=\partial_{\mathcal N}\mu=0\qquad\,\,\,\,\quad\quad& \mbox{ \,on $\partial \Omega$,}\\
% \bfv(0,\cdot)=\bfv_0\,\qquad \varphi(0,\cdot)=\varphi_0&\mbox{ \,in $\Omega$.}\end{array}\right.
% \end{align}
In the following let $(\bfv_n,\varphi_n,\mu_n)$ be a solution to \eqref{Approx1.1}--\eqref{Approx1.2} with $\eps=\frac1n$ which exists due to Theorem \ref{thm:ExistenceNS}. So there holds
\begin{align}
-\int_{\Q_T} &\varrho_n\bfv_n\cdot\partial_t\bfeta\dxt+\int_{\Q_T} \bfS(\varphi_n,\bfD\bfv_n):\bfD\bfeta\dxt\label{eq:weak}\\&=\int_{\Q_T}\varrho_n\bfv_n\otimes \Psi_{1/n}(\bfv_n):\bfD\bfeta\dxt+\int_{\Q_T}\Psi_{1/n}(\mu_n\nabla \varphi_n)\cdot\bfeta\dxt\nonumber\\&+\int_{\Q_T} R_n\tfrac{\bfv_n}{2}\cdot\bfeta\dxt+\int_{\Q_T}\bfv_n\otimes\bfJ_n :\bfD\bfeta\dxt+\int_\Omega \varrho_0\varphi_0\cdot\bfeta(0,\cdot)\dx\nonumber
\end{align}
for all $\bfeta\in C^\infty_{0,\sigma}([0,T)\times \Omega)$. Here we have
$\bfJ_n=-\frac{\partial\varrho}{\partial\varphi}\nabla\mu_n$, $R_n=-\nabla\frac{\partial\varrho}{\partial\varphi}(\varphi_n)\cdot\nabla\mu_n$ and $\varrho_n=\varrho(\varphi_n)$.\\\

\textbf{Step 1: weak convergence.}\\
We obtain the following a priori estimates (testing the momentum equation with $\bfv_n$, the equation for $\varphi_n$ with $\mu_n$ and the equation for $\mu_n$ with $\partial_t\varphi_n$)
\begin{align*}
\sup_{t\in(0,T)} \bigg(\int_\Omega \varrho_n&|\bfv_n|^2\dx+\int_\Omega\frac{|\nabla \varphi_n|^2}{2}\dx+\int_\Omega f(\varphi_n)\dx\bigg)\\&+\int_{\Q_T}|\nabla\bfv_n|^p\dxt+\int_{\Q_T} |\nabla \mu_n|^2\dxt\leq C.
\end{align*}
This implies the following convergences (after choosing appropriate subsequences)
\begin{align}\label{eq:conv1}
\begin{aligned}
\bfv_n&\rightharpoonup \bfv \quad\text{in}\quad L^p(0,T;W^{1,p}_{0,\sigma}(\Omega)),\\
\bfv_n&\rightharpoonup^\ast \bfv \quad\text{in}\quad L^\infty(0,T;L^2(\Omega)),\\
\bfS(\varphi_n,\bfD\bfv_n)&\rightharpoonup \tilde{\bfS} \quad\text{in}\quad L^{p'}({\Q_T}),\\
\bfv_n\otimes \varrho_n\Psi_{1/n}(\bfv_n) &\rightharpoonup\tilde{\bfH} \quad\text{in}\quad L^{p\frac{d+2}{2d}}({\Q_T})\\
\varphi_n&\rightarrow \varphi\quad\text{in}\quad L^2(0,T;C^1(\overline{\Omega})),\\
\nabla \varphi_n&\rightarrow \nabla \varphi\quad\text{in}\quad L^4({\Q_T})
\end{aligned}
\end{align}
as $n\to\infty$.
By Theorem \ref{thm:ExistenceCH} and Remark \ref{cor:ExistenceCH} we gain due to the boundedness $\bfv_n\in L^\infty(0,T;L^2_\sigma(\Omega))$
\begin{align*}
\varphi_n\in L^4(0,T; W^{4,2}(\Omega))\cap W^{1,4}(0,T; L^2(\Omega))\hookrightarrow L^\infty(0,T;C^1(\overline\Omega))
\end{align*}
uniformly in $n$ and therefore
\begin{align*}
\|\varphi_n\|_{L^2(0,T;W^{2,6})}+\|f(\varphi_n)\|_{L^2(0,T;L^6)}\leq C.
\end{align*}
So, finally we have
\begin{align}\label{eq:creg}
\begin{aligned}
\nabla \varphi_n&\rightharpoonup^\ast \nabla \varphi\quad\text{in}\quad L^\infty({\Q_T}),\\
\varphi_n&\rightarrow \varphi\quad\text{in}\quad L^q(0,T;C^1(\overline{\Omega})),\\
\varphi_n&\rightarrow \varphi\quad\text{in}\quad L^4(0,T;W^{3,2}(\overline{\Omega})),\\
\varrho(\varphi_n)&\rightarrow \varrho(\varphi)\quad\text{in}\quad L^\infty({\Q_T})
\end{aligned}
\end{align}
as $n\to\infty$
for all $q<\infty$. As we have  $\partial_t \varphi_n\in L^2({\Q_T})$ and $\bfv_n\cdot\nabla \varphi_n\in L^2({\Q_T})$ uniformly in $n$ (recall \eqref{eq:conv1}$_1$ and \eqref{eq:creg}$_2$), equation \eqref{Approx1.1c} yields
\begin{align*}
\Delta\mu_n\in L^{2}({\Q_T})
\end{align*}
uniformly in $n$. Combining this with the a priori estimates we also have
\begin{align}\label{eq:mu1}
\nabla^2\mu_n\in L^{2}({\Q_T})
\end{align}
uniformly in $n$ and hence by (A3)
\begin{align}\label{eq:J}
\nabla\bfJ_n\in L^{2}({\Q_T})
\end{align}
uniformly in $n$. 
From \eqref{eq:weak} and the a priori estimates we gain
\begin{align}\label{eq:time}
\partial_t P_\sigma(\varrho_n\bfv_n)\in L^{s_0}(0,T;W^{-1,s_0}(\Omega))
\end{align}
uniformly in $n$ for some $s_0>1$ (not that we have only control over solenoidal test-functions but $\Div(\varrho_n\bfv_n)\neq0$). In addition, we have by \eqref{eq:creg}$_1$ and the continuity of $P_\sigma$ on $W^{1,p}(\Omega)$
\begin{align}\label{eq:time}
\nabla P_\sigma(\varrho_n\bfv_n)\in L^{p}({\Q_T}).
\end{align}
Both together yield compactness of $P_\sigma(\varrho_n\bfv_n)$ in $L^p(\Q_T)$ by Aubin-Lions compactness theorem. So we have
\begin{align}\label{eq:vrho}
P_\sigma(\varrho_n\bfv_n)\rightarrow_{n\to \infty} P_\sigma(\varrho\bfv)\quad\text{in}\quad L^2({\Q_T})
\end{align}
using \eqref{eq:creg}.
Due to \eqref{eq:vrho} and \eqref{eq:conv1}$_2$ we have
\begin{align*}
\int_{\Q_T} |\sqrt{\varrho_n}\bfv_n|^2\dxt&=\int_{\Q_T} P_\sigma(\varrho_n\bfv_n)\cdot\bfv_n\dxt\longrightarrow \int_{\Q_T} P_\sigma(\varrho\bfv)\cdot\bfv\dxt\\&=\int_{\Q_T} |\sqrt{\varrho}\bfv|^2\dxt
\end{align*}
for $n\rightarrow\infty$. This implies that $\sqrt{\varrho_n}\bfv_n$ converges strongly in $L^2({\Q_T})$. As $\varrho_n$ is strictly positive we have compactness of $\bfv_n$ as well and hence
\begin{align}\label{eq:convv}
\bfv_n&\rightarrow \bfv \quad\text{in}\quad L^{s}({\Q_T})\quad \text{for all}\quad s<p\frac{d+2}{d},\\
\bfv_n&\rightarrow \bfv \quad\text{in}\quad L^{q}(0,T;L^2(\Omega))\quad \text{for all}\quad q<\infty.\label{eq:convv2}
\end{align}
Moreover, we have
\begin{align}\label{eq:conv2}
\begin{aligned}
\bfJ_n&\rightharpoonup \bfJ=-\frac{\partial\varrho}{\partial\varphi}\nabla\mu\quad\text{in}\quad L^2({\Q_T}),\\
\bfv_n\otimes\bfJ_n&\rightharpoonup \bfv\otimes\bfJ\quad\text{in}\quad L^{s_0}({\Q_T}).
\end{aligned}
\end{align}
for some $s_0>1$.
On account of  \eqref{eq:J} we have
\begin{align}\label{eq:DJv}%\begin{aligned}
%(\nabla\bfJ_n)\bfv_n&\rightharpoonup (\nabla\bfJ)\bfv\quad\text{in}\quad L^{s_0}({\Q_T}),\\
\nabla(\bfv_n\otimes\bfJ_n)&\rightharpoonup \nabla(\bfv\otimes\bfJ)\quad\text{in}\quad L^{s_0}({\Q_T}),
%\end{aligned}
\end{align}
combining \eqref{eq:J} with \eqref{eq:convv}.

Combining \eqref{eq:conv1}$_4$ and \eqref{eq:convv} yields $\tilde{\bfH}=\bfv\otimes \varrho\bfv$. This means we also have the convergences
\begin{align}\label{eq:conv3}
\begin{aligned}
\nabla(\bfv_n\otimes\varrho_n\Psi_{1/n}(\bfv_n))&\rightharpoonup \nabla(\bfv\otimes\varrho\bfv)\quad\text{in}\quad L^{p\frac{d+2}{2d+2}}({\Q_T}),\\
\Psi_{1/n}(\mu_n\nabla \varphi_n)&\rightharpoonup P_\sigma(\mu\nabla \varphi)\quad\text{in}\quad L^{1}({\Q_T}),
\end{aligned}
\end{align}
using the pointwise convergence $\Psi_{1/n}\rightarrow P_\sigma$. We have the limit equation
\begin{align}\label{eq:limitvdelta_}
\begin{aligned}
-\int_{\Q_T}\varrho\bfv&\cdot\partial_t\bfeta\dxt +\int_{\Q_T}\tilde\bfS:\nabla\bfeta\dxt-\int_{\Q_T}\bfv\otimes \varrho\bfv:\nabla\bfeta\dxt\\&=\int_{\Q_T} R\tfrac{\bfv}{2}\cdot\bfeta\dxt+\int_{\Q_T}\bfv\otimes\bfJ:\nabla\bfeta\dxt+\int_{\Q_T} \mu\nabla \varphi\cdot\bfeta\dxt
\end{aligned}
\end{align}
for all $\bfeta\in C^\infty_{0,\sigma}({\Q_T})$ and the equation for the difference $\varrho_n\bfv_n-\varrho\bfv$
\begin{align}\label{eq:limitum}
\begin{aligned}
-\int_{\Q_T}&(\varrho_n\bfv_n-\varrho\bfv)\cdot\partial_t\bfeta\dxt +\int_{\Q_T}\big(\bfS(\varphi_n,\bfD\bfv_n)-\tilde{\bfS}\big):\nabla\bfeta\dxt\\&-\int_{\Q_T}\big(\bfv_n\otimes\varrho_n\Psi_{1/n}(\bfv_n)-\bfv\otimes\varrho\bfv\big):\nabla\bfeta\dxt\\
&=\int_{\Q_T} \big(R_n\tfrac{\bfv_n}{2}-R\tfrac{\bfv}{2}\big)\cdot\bfeta\dxt
+\int_{\Q_T}\big(\bfv_n\otimes\bfJ_n-\bfv\otimes\bfJ\big):\nabla\bfeta\dxt\\
&+\int_{\Q_T} \big(\Psi_{1/n}(\mu_n\nabla \varphi_n)-\mu\nabla \varphi\big)\cdot\bfeta\dxt.
\end{aligned}
\end{align}
%We can write for $\underline{\bfa}^m:=\tfrac{1}{m}\dashint_G|\underline{\bfv}^m|^{q-2}\underline{\bfv}^m\dx$
%\begin{align*}
%\frac{1}{m}\int_0^t\int_G|\underline{\bfv}^m|^{q-2}\underline{\bfv}^m\cdot\bfeta\dxs&=\int_0^t\int_G\Big(\frac{1}{m}|\underline{\bfv}^m|^{q-2}\underline{\bfv}^m-\underline{\bfa}^m\big)\cdot\bfeta\dxs\\
%&=-\int_0^t\int_G\Bog_G\Big(\frac{1}{m}|\underline{\bfv}^m|^{q-2}\underline{\bfv}^m-\underline{\bfa}^m\Big):\nabla\bfeta\dxs
%\end{align*}
%using the \Bogovskii-operator and $(\bfvarphi)_G=0$ for $\bfvarphi\in C^\infty_{0,\sigma}(G)$. Similarly we can write for $\underline{\bfb}^m:=\dashint_G\big(\underline{\bff}^m-\underline{\bff}\big) \dx$
%\begin{align*}
%\int_0^t\int_G\big(\underline{\bff}^m-\underline{\bff}\big)\cdot\bfvarphi\dxs
%=-\int_0^t\int_G\Bog_G\big(\underline{\bff}^m-\underline{\bff}-\underline{\bfb}^m\big):\nabla\bfvarphi\dxs.
%\end{align*}

\textbf{Step 2: pressure function and strong convergence.}\\
Now we define
\begin{align*}
\bfH^n&:=\bfH_1^n+\bfH_2^n,\quad
\bfH_1^n:=\bfS(\varphi_n,\bfD\bfv_n)-\tilde{\bfS},\\ \bfH_2^n&:=-\bfv_n\otimes\varrho_n\Psi_{1/n}(\bfv_n)
+\bfv\otimes\varrho\bfv-\big(\bfv_n\otimes{\bfJ}_n-\bfv\otimes\bfJ\big)
\\&+\nabla\Delta^{-1}_D\big(\Psi_{1/n}(\mu_n\nabla \varphi_n)-\mu\nabla \varphi\big)+\nabla\Delta^{-1}_D\big(R_n\tfrac{\bfv_n}{2}-R\tfrac{\bfv}{2}\big).
\end{align*}
where $\Delta^{-1}_D$ is the solution operator to the Laplace equation on $\Omega$ with respect to zero boundary conditions.
Then we have
\begin{align*}
-\int_{\Q_T}\big(\varrho_n\bfv_n-\varrho\bfv&\big)\cdot\partial_t\bfeta\dxt \nonumber=-\int_{\Q_T}\bfH_1^n:\nabla\bfeta\dxt+\int_{\Q_T}\Div\bfH_2^n\cdot\bfeta\dxt.%\label{eq:diffpi0'}
\end{align*}
Moreover, we know
\begin{align}
\bfH_1^n&\rightharpoonup 0\quad\text{in}\quad L^{p'}({\Q_T}),\label{eq:convH1}\\
\bfH_2^n,\Div\bfH_2^n&\rightharpoonup 0\quad\text{in}\quad L^{s_0}({\Q_T})\quad \text{for some}\quad s_0>1\label{eq:convH2}
\end{align}
and introduce the pressure in accordance to Theorem \ref{thm:pressure} and Corollary \ref{cor:pressure}. We obtain functions $\pi^n_h$, $\pi^n_{1}$, $\pi^n_{2}$ such that for some $r\in(1,2]$ and any $\Omega'\Subset\Omega$
\begin{align*}
\|\pi_h^n\|_{L^\infty(0,T;L^{r}(\Omega))}&\leq \,c\,\Big(\|\bfH^n\|_{L^r({\Q_T})}+\|\bfu^n\|_{L^\infty(0,T;L^2(\Omega))}\Big),\\
\|\pi_1^n\|_{L^{p'}({\Q_T})}&\leq \,c\,\|\bfH_1^n\|_{L^{p'}({\Q_T})}\\
\|\pi_2^n\|_{L^{r}({\Q_T})}&\leq \,c\,\|\bfH_2^n\|_{L^{r}({\Q_T})}\\
\|\nabla\pi_2^n\|_{L^{r}((0,T)\times\Omega')}&\leq \,c(\Omega')\,\|\nabla\bfH_2^n\|_{L^{r}({\Q_T})}
\end{align*}
for some $c>0$ and
\begin{align}
-\int_{\Q_T}\big(\varrho_n\bfv_n-\varrho\bfv&
\big)\cdot\partial_t\bfeta\dxt+\int_{\Q_T}\pi^n_h\,\partial_t\Div\bfeta\dxt \nonumber\\&=-\int_{\Q_T}\big(\bfH_1^n-\pi_{1}^n I\big):\nabla\bfeta\dxt+\int_{\Q_T}\Div\big(\bfH_2^n-\pi_{2}^n I\big)\cdot\bfeta\dxt\label{eq:diffpi0}
\end{align}
for all $\bfeta \in C_0^\infty(\Q_T)^d$.
We have the following convergences for the pressure functions:
\begin{align}\label{eq:convpi}
\begin{aligned}
\pi_h^n&\rightharpoonup^\ast 0\quad\text{in}\quad L^\infty(0,T;L^{r}(\Omega)),\\
\pi_{1}^n&\rightharpoonup 0\quad\text{in}\quad L^{p'}({\Q_T}),\\
\pi_{2}^n&\rightharpoonup 0\quad\text{in}\quad L^{r}({\Q_T}),\\
\nabla\pi_{2}^n&\rightharpoonup 0\quad\text{in}\quad L^{r}((0,T)\times\Omega')\quad \text{for all}\quad\Omega'\Subset\Omega.
\end{aligned}
\end{align}
A main difference to the approach in \cite{Wolf} is that $\pi_h^n$
is not harmonic. In fact, we have by Theorem \ref{thm:pressure} a)
\begin{align}\label{eq:Deltapih}
\Delta\pi_h^n=-\Div(\varrho_n\bfv_n-\varrho\bfv)
=-\nabla\varrho_n\cdot\bfv_n+\nabla\varrho\cdot\bfv\in L^\infty(0,T;L^2(\Omega))
\end{align}
uniformly in $n$ (recall \eqref{eq:conv1} and \eqref{eq:creg}). Moreover,
\eqref{eq:creg} and \eqref{eq:convv} imply
\begin{align}\label{eq:convpih}
\Delta\pi_h^n&\rightarrow 0 \quad\text{in}\quad L^{s}({\Q_T})\quad \text{for all}\quad s<p\frac{d+2}{d}.
\end{align}
For any $\Omega'\Subset\Omega''\Subset\Omega$ and any $q,\correct{r}\in(1,\infty)$ we have
\begin{align}\label{eq:estpih}
\int_{\Omega'}|\nabla^2\pi_h^n(t)|^q\dx\leq c(q,\Omega',\Omega'')\bigg(\int_{\Omega''}|\Delta\pi_h^n(t)|^{\correct{q}}\dx+\bigg(\int_{\Omega''}|\pi_h^n(t)|^{\correct{r}}\dx\bigg)^{\correct{\frac{q}r}}\bigg).
\end{align}
This estimate is a consequence of \cite[Theorem~9.11]{GilTru} in combination with Sobolev-embeddings.
%This is a consequence of the continuity $\Delta^{-1}:L^q(\Omega)\rightarrow W^{2,q}_0(\Omega)$ and a localization argument.
We gain on account of \eqref{eq:convpi}--\eqref{eq:convpih} and \eqref{eq:estpih}
\begin{align}\label{eq:regpih}
\pi_h^n\in L^\infty(0,T;W^{2,s}_{loc}(\Omega))\quad \text{for all}\quad s<p\frac{d+2}{d},
\end{align}
uniformly in $n$. Now we are concerned with compactness of $\pi_h^n$.
We set $\bfq_n:=\varrho_n\bfv_n-\varrho\bfv+\nabla\pi_h^n$ and have
\begin{align}\label{eq:qspace}
\bfq_n\in L^p(0,T;W^{1,p}_{loc}(\Omega))
\end{align}
uniformly in $n$ (combining \eqref{eq:conv1}, \eqref{eq:creg} and \eqref{eq:regpih}). Moreover, there holds by \eqref{eq:diffpi0}
\begin{align}\label{eq:qtime}
\bfq_n\in W^{1,r}(0,T;W^{-1,r}(\Omega))
\end{align}
uniformly in $n$. This implies by the Aubin-Lions Theorem using \eqref{eq:regpih}
\begin{align}\label{eq:qconv}
\bfq_n\rightarrow 0\quad \text{in}\quad L^q(0,T;L^q_{loc}(\Omega))\quad \text{for all}\quad q<p\frac{d+2}{d}.
\end{align}
Combining this with \eqref{eq:convv} and \eqref{eq:creg}$_4$ yields
\begin{align}\label{eq:qcomp}
\nabla\pi^n_h\rightarrow 0\quad \text{in}\quad L^q(0,T;L^q_{loc}(\Omega))\quad \text{for all}\quad q<p\frac{d+2}{d}.
\end{align}
We can assume that $\int_\Omega \pi_h^n(x,t)\dx=0$ for a.e. $t$, such that
\begin{align}\label{eq:harmpi2}
\pi_h^n&\rightarrow 0 \quad\text{in}\quad L^{q}(0,T;L^{q}_{loc}(\Omega))\quad \text{for all}\quad q<p\frac{d+2}{d}.
\end{align}
In order to show \eqref{eq:harmpi2} we make use of the \Bogovskii\, operator introduced in \cite{Bog80}. It is a solution operator to divergence equation with respect to zero boundary conditions. Taking an arbitrary ball $B\Subset\Omega$ and the \Bogovskii\, operator $\Bog_B$ with respect to this ball we have $\Bog_B :L^r_{(0)}(B)\rightarrow W^{1,r}_0(B)$ for all $r\in(1,\infty)$. So we gain
\begin{align*}
\|\pi_h^n\|_{L^q(B)}&=\sup_{\eta\in C^\infty_0(B),\,\,\|\eta\|_{q'}=1}\int_\Omega \pi_h^n\,(\eta-\eta_B)\dx\\&=\sup_{\eta\in C^\infty_0(B),\,\,\|\eta\|_{q'}=1}\int_\Omega  \pi_h^n\,\Div\Bog_B(\eta-\eta_B)\dx\\
&\leq \sup_{\eta\in C^\infty_0(B),\,\,\|\eta\|_{q'}=1}\int_B  \nabla\pi_h^n\cdot\Bog_B(\eta-\eta_B)\dx\\
&\leq \sup_{\eta\in C^\infty_0(B),\,\,\|\eta\|_{q'}=1}\|\nabla\pi_h^n\|_{L^q(B)}\|\Bog_B(\eta-\eta_B)\|_{L^{q'}(B)}\\
&\leq\,c\,\|\nabla\pi_h^n\|_{L^q(B)}.
\end{align*}
Integrating in time and using \eqref{eq:qcomp} yields \eqref{eq:harmpi2}.
Finally \eqref{eq:convpih}--\eqref{eq:harmpi2} imply
\begin{align}\label{eq:harmpi}
\pi_h^n&\rightarrow 0\quad\text{in}\quad L^{q}(0,T;W^{2,q}_{loc}(\Omega))\quad \text{for all}\quad q<p\frac{d+2}{d}.
\end{align}
%This implies via interpolation
%\begin{align}\label{eq:harmpi2}
%\nabla\pi_h^n&\rightarrow 0\quad\text{in}\quad L^s({\Q_T}),\quad s<p\frac{d+2}{d}.
%\end{align}
In the following we need to show that $\tilde{\bfS}=\bfS(\varphi,\bfD\bfv)$.\\\

\textbf{Step 3: $L^\infty$-truncation and monotone operator theory.}\\
By density arguments we are allowed to test with $\bfeta\in L^p(0,T;W_0^{1,p}(\Omega)^d)\cap L^\infty(\Q_T)^d$ in \eqref{eq:diffpi0}.
Since the function $\bfv$ does not belong to this class, the $L^\infty$-truncation is an appropriate method (see \cite{FrehseMalekSteinhauer1} for the steady case and \cite{Wolf} for the unsteady problem).\\
We define $h_L$ and $H_L$, $L\in\N_0$, by
\begin{align*}
h_L(s)&:=\int_0^s \Upsilon_L(\theta)\theta\,\mathrm{d}\theta,\quad H_L(\bfxi):=h_L(|\bfxi|),\\
\Upsilon_L&:=\sum_{\ell=1}^L \psi_{2^{-\ell}},\quad \psi_\delta(s):=\psi(\delta s),
\end{align*}
where $\psi\in C^\infty([0,\infty))$ with $0\leq\psi\leq1$, $\psi\equiv1$ on $[0,1]$, $\psi=0$ on $[2,\infty)$ and $0\leq-\psi'\leq2$. 
Now we use in \eqref{eq:diffpi0} the test-function $\bfeta=\eta\Upsilon_L(|\bfq_n|)\bfq_n$, where $\eta\in C^\infty_0(\Omega)$.
This yields for a.e. $t\in(0,T)$ using $\bfq_n(0)=0$ (recall Theorem \ref{thm:pressure} a))
\begin{align*}
\int_\Omega \eta H_L(\bfq_{n}(t))\dx%&=\int_0^t\frac{d}{dt}\int_\Omega \eta h_L(|\bfq_{n}(\sigma)|)\dxs\\%=\int_0^t\int_\Omega \eta \frac{h'_L(|\bfu_{n}|)}{|\bfu_{n}|}\bfu_n\cdot\partial_t\bfu_n\dxs\\
=&\int_0^t\left\langle \partial_t\bfq_n, \eta \Upsilon_L(|\bfq_n|)\bfq_n\right\rangle\ds\\
=&-\int_\Omega\int_0^t\eta\big(\bfH_1^{n}-\pi^{n}_{1}I\big)
:\nabla\big(\Upsilon_L(|\bfq_{n}|)\bfq_{n}\big)\dxs\\
&-\int_\Omega\int_0^t\big(\bfH_1^{n}-\pi^{n}_{1}I\big)
:\nabla\eta\otimes\Upsilon_L(|\bfq_n|)\bfq_{n}\dxs\\
&+\int_\Omega\int_0^t\eta\Div\big(\bfH_2^{n}-\pi^{n}_{2}I\big)\Upsilon_L(|\bfq_n|)\bfq_n\dxs
=:(I)+(II)+(III)
\end{align*}
and $\partial_t \bfq_n \in L^{p'}(0,T;(W^{1,p}(\Omega)\cap L^\infty(\Omega))')$, recall \eqref{eq:qtime}.
The aim of the following observations is to show that the terms $(II)$ and $(III)$ vanish for $n\rightarrow\infty$ which gives the same for $(I)$ (note that the term on the left hand side vanishes for a.e. $t$ by \eqref{eq:qconv}). By monotone operator theory this yields $\bfD\bfv_n\rightarrow\bfD\bfv$ a.e.
Due to the construction of $\Upsilon_L$ we obtain, after passing to a subsequence,
\begin{align}\label{eq:convpsiu}
\Upsilon_L(|\bfq_n|)\bfq_n&\longrightarrow 0\quad\text{in}\quad L^r({\Q_T})\quad \text{as}\quad n\rightarrow\infty
\end{align}
for all $r<\infty$ (first, we have boundedness in $L^r$, then the strong convergence follows from \eqref{eq:qconv}). This implies
\begin{align*}
(II),\,(III)
\longrightarrow0,\quad n\rightarrow\infty,
\end{align*}
as a consequence of \eqref{eq:convH1} and \eqref{eq:convH2}.
Plugging all together, we have shown
\begin{align}\label{eq:monop1}
\begin{aligned}
\limsup_{n\rightarrow\infty}&\,\int_{\Q_t}\eta  \big(\bfS(\varphi_n,\bfD\bfv_n)-\tilde{\bfS})\big):\Upsilon_L(|\bfq_n|)\bfD\bfq_n\dxs\\
&\leq \limsup_{n\rightarrow\infty}\int_{\Q_t}(- \eta )\big(\bfS(\varphi_n,\bfD\bfv_n)-\tilde{\bfS})\big):\nabla\Upsilon_L(|\bfq_n|)\otimes\bfq_n\dxs\\
&+ \limsup_{n\rightarrow\infty}\int_{\Q_t} \eta\,\pi^{n}_{1}\Div\big(\Upsilon_L(|\bfq_{n}|)\bfq_{n}\big)\dxs.
\end{aligned}
\end{align}
Now we want to show that the right-hand-side is bounded in $L\in\N$. Since $\Div \bfq_{n}=0$, there holds
\begin{align*}
\limsup_{n\rightarrow\infty}&\int_{\Q_t} \eta\,\pi^{n}_{1}\Div\big(\Upsilon_L(|\bfq_{n}|)\bfq_n\big)\dxs\\&=\limsup_{n\rightarrow\infty}\int_{\Q_t} \eta\,\pi^{n}_{1}\nabla\Upsilon_L(|\bfq_{n}|)\cdot\bfq_n\dxs.
\end{align*}
So, by \eqref{eq:convpi}, we only need to show
\begin{align}\label{eq:Psip}
\nabla\Upsilon_L(|\bfq_n|)\bfq_n\in L^p(0,T;L^p_{loc}(\Omega))
\end{align}
uniformly in $L$ and $n$ to conclude
\begin{align}\label{eq:monop2}
\limsup_{n\rightarrow\infty}&\,\int_{\Q_t}\eta  \big(\bfS(\varphi_n,\bfD\bfv_n)-\tilde{\bfS})\big):\Upsilon_L(|\bfq_n|)\bfD\bfq_n\dxs\leq K.
\end{align}
We have for all $\ell\in\N_0$
\begin{align*}
\big|\nabla\big\{\psi_{2^{-\ell}}(|\bfq_n|)\big\}\bfq_n\big|&\leq \big|\psi'_{2^{-\ell}}(|\bfq_n|)\bfq_n\otimes\nabla\bfq_n\big|\\
&\leq -2^{-\ell}|\bfq_n|\psi'(2^{-\ell}|\bfq_n|)|\nabla\bfq_n|\leq c |\nabla\bfq_n|\chi_{A_\ell},\quad \text{where}\\
A_\ell&:=\left\{2^{\ell}< |\bfq_n|\leq 2^{\ell+1}\right\}.
\end{align*}
This implies
\begin{align*}
\big|\nabla\Upsilon_{L}(|\bfq_{n}|)\bfq_{n}\big|&\leq \sum_{\ell=0}^L \big|\nabla\big\{\psi_{2^{-\ell}}(|\bfq_{n}|)\big\}\bfq_{n}\big|\leq \tilde{c}\sum_{\ell=0}^L \big|\nabla\bfq_{n}\big|\chi_{A_\ell}\leq c|\nabla\bfq^{n}|.
\end{align*}
This yields \eqref{eq:Psip} and hence also \eqref{eq:monop2} is shown.
Now we consider
\begin{align*}
\Sigma_{L,n}:=\,\int_{\Q_t}\eta  \big(\bfS(\varphi_n,\bfD\bfv_n)-\tilde{\bfS})\big):\Upsilon_L(|\bfq_{n}|)\bfD\bfq_{n}\dxs.
\end{align*}
On account of \eqref{eq:monop2} we have $\Sigma_{L,n}\leq K$ independent of $L$ and $n$. Thus, using Cantor's diagonalizing principle we obtain a subsequence with
\begin{align*}
\sigma_{\ell,n_k}:=\,\int_{\Q_t}\eta  \big(\bfS(\varphi_{n_k},\bfD\bfv_{n_k})-\tilde{\bfS})\big):\psi_{2^{-\ell}}(|\bfq_{n_k}|)\bfD\bfq_{n_k}\dxs
\longrightarrow \sigma_\ell\quad \text{as}\quad l\to \infty
\end{align*}
for all $ \ell\in\N_0.$ We know as a consequence of the monotonicity of $\bfS$ assumed in (A2), $\nabla \rho_n\to \nabla \rho$ in $L^s(\Q_T)$ for any $s<\infty$, and $\nabla^2 \pi^n_h\to 0$ in $L^q_{loc}(\Q_T)$ for $n\rightarrow\infty$, see \eqref{eq:harmpi}, that $\sigma_\ell\geq0$ for all $\ell\in\N$.
Moreover, $\sigma_\ell$ is increasing in $\ell$. This implies on account of (\ref{eq:monop2})
\begin{align*}
0\leq \sigma_0\leq \frac{\sigma_0+\sigma_1+...+\sigma_\ell}{\ell}\leq \frac{K}{\ell}
\end{align*}
for all $\ell\in\N$. Hence we have $\sigma_0=0$ and therefore
\begin{align*}
\int_{\Q_T} \eta\big(\bfS(\varphi_n,\bfD\bfv_n)-\tilde{\bfS})\big):\psi_1(|\bfq_{n}|)\bfD\bfq_{n}\dxt\longrightarrow0,\quad n\rightarrow\infty.
\end{align*}
Due to \eqref{eq:conv1}, \eqref{eq:creg}, \eqref{eq:convv} uniform boundedness of $\psi_1(|\bfq_n|)$ and \eqref{eq:harmpi} we have for $n\rightarrow\infty$
\begin{align*}
&\int_{\Q_t} \eta\big(\bfS(\varphi_n,\bfD\bfv_n)-\bfS(\varphi,\bfD\bfv)\big):\psi_1(|\bfq_{n}|)\bfD\nabla\pi_h^n\dxs\longrightarrow0,\\
&\int_{\Q_t} \eta\big(\bfS(\varphi_n,\bfD\bfv_n)-\bfS(\varphi,\bfD\bfv)\big):\psi_1(|\bfq_{n}|)\bfD\big((\varrho_n-\varrho)\bfv_n\big)\dxs\longrightarrow0,
\end{align*}
and hence
\begin{align*}%\label{eq:monop1b}
\int_{\Q_t} \eta\big(\bfS(\varphi_n,\bfD\bfv_n)-\bfS(\varphi,\bfD\bfv)\big):\psi_1(|\bfq_{n}|)\bfD(\varrho(\bfv_{n}-\bfv))\dxs\longrightarrow0\quad \text{as } n\rightarrow\infty.
\end{align*}
Using $\nabla\varrho\in L^\infty(\Q_T)$ \eqref{eq:conv1} and \eqref{eq:convv} this implies
\begin{align}\label{eq:monop1b}
\int_{\Q_t} \varrho\eta\big(\bfS(\varphi_n,\bfD\bfv_n)-\bfS(\varphi,\bfD\bfv)\big):\psi_1(|\bfq_{n}|)\bfD(\bfv_{n}-\bfv)\dxs\longrightarrow0\quad \text{as } n\rightarrow\infty.
\end{align}
For $\vartheta\in(0,1)$ we obtain
\begin{align*}
&\int_{\Q_t} \Big(\eta\varrho\big(\bfS(\varphi_n,\bfD\bfv_{n})-\bfS(\varphi,\bfD\bfv)\big):\bfD(\bfv_{n}-\bfv)\Big)^\vartheta\dxs\\&=\int_{{\Q_t}} \chi_{\set{|\bfq_{n}|> 1}}\Big(\eta\varrho\big(\bfS(\varphi_n,\bfD\bfv_{n})-\bfS(\varphi,\bfD\bfv)\big):\bfD(\bfv_{n}-\bfv)\Big)^\vartheta\dxs\\
&+\int_{{\Q_t}} \chi_{\set{|\bfq_{n}|\leq 1}}\Big(\eta\varrho\big(\bfS(\varphi,\bfD\bfv_n)-\bfS(\varphi,\bfD\bfv)\big):\bfD(\bfv_{n}-\bfv)\Big)^\vartheta\dxs\\
&=:(A)+(B).
\end{align*}
By \eqref{eq:qconv} and \eqref{eq:conv1} there holds
\begin{align*}
(A)&\leq \LL^{d+1}\Big(\Big\{(t,x) \in (0,T)\times\mathrm{spt}(\eta):|\bfq_{n}(t,x)|\geq 1\Big\}\Big)^{1-\vartheta}\\
&\times\bigg(\int_{ {\Q_t}}\eta\varrho\big(\bfS(\varphi_n,\bfD\bfv_n)-\bfS(\varphi,\bfD\bfv)\big):\bfD(\bfv_{n}-\bfv)\dxs\bigg)^\vartheta\\
&\leq c\bigg(\int_{0}^T\int_{\mathrm{spt}(\eta)} |\bfq_n|^2\dx\ds\bigg)^{1-\vartheta}\longrightarrow0\quad \text{as } n\rightarrow\infty,
\end{align*}
where we took into account H\"older's inequality.  Since $(B)$ also vanishes for $n\rightarrow\infty$ by \eqref{eq:monop1b}, we
finally have shown
\begin{align*}
\int_{\Q_t} \Big(\eta\varrho\big(\bfS(\varphi_n,\bfD\bfv_n)-\bfS(\varphi,\bfD\bfv)\big):\bfD\big(\bfv_n-\bfv\big)\Big)^\vartheta\dxs
&\longrightarrow0\quad \text{as } n\rightarrow\infty,
\end{align*}
for all $\vartheta\in(0,1)$. We have by assumption (A2) choosing $\vartheta>\frac{1}{2}$
\begin{align*}
\int_{\Q_t} &\Big|\eta\varrho\big(\bfS(\varphi_n,\bfD\bfv_n)-\bfS(\varphi,\bfD\bfv_n)\big):\bfD\big(\bfv_n-\bfv\big)\Big|^\vartheta\dxs\\&\leq\,c\,\int_{\Q_t} |\varphi_n-\varphi|^\vartheta(1+|\bfD\bfv_n|^{\vartheta(p-1)})\big(|\bfD\bfv_n|^\vartheta+|\bfD\bfv|^\vartheta\big)\dxs\\
&\leq\,c\,\bigg(\int_{\Q_T}|\varphi_n-\varphi|^{\frac{\vartheta}{1-\vartheta}}\dxt\bigg)^{1-\vartheta}\bigg(\int_{\Q_T} (1+|\bfD\bfv_n|^{p}+|\bfD\bfv|^p)\dxs\bigg)^\vartheta
\longrightarrow_{n\to\infty}0,
\end{align*}
where we have used \eqref{eq:creg} and \eqref{eq:conv1}$_1$. This finally implies
\begin{align*}
\int_{\Q_t} \Big(\eta\varrho\big(\bfS(\varphi,\bfD\bfv_n)-\bfS(\varphi,\bfD\bfv)\big):\bfD\big(\bfv_n-\bfv\big)\Big)^\vartheta\dxs
&\longrightarrow0\quad \text{as } n\rightarrow\infty,
\end{align*}
for all $\vartheta\in\big(\frac{1}{2},1\big)$ and a.e. $t\in(0,T)$. The monotonicity of $\bfS$ supposed in (A2) implies that $\bfD\bfv_n\rightarrow\bfD\bfv$ a.e. as $\eta$ is arbitrary and $\varrho$ is strictly positive.
This justifies the limit procedure in the energy integral, e.g. $\tilde{\bfS}=\bfS(\varphi,\bfD\bfv)$ is shown and the proof of Theorem \ref{thm:main} is therefore complete.

\appendix 
\section{Appendix: Pressure Decomposition}
The next theorem is in the spirit of \cite[Theorem 2.6]{Wolf} but without the condition
$\Div\bfu=0$. 
\begin{theorem}\label{thm:pressure}
Let $\bfu\in C_w([0,T];L^2(\Omega)^d)$, $1<q<\infty$, and let $\bfH\in L^q({\Q_T})^{d\times d}$ such that
\begin{align*}
\int_{\Q_T} \bfu\cdot\partial_t\bfeta\dxt=\int_{\Q_T}\bfH:\nabla\bfeta\dxt
\end{align*}
for all $\bfeta\in C^\infty_{0,\sigma}({\Q_T})$.
Then there are integrable functions $\pi_h$ and $\pi_0$ such that
\begin{align*}
\int_{\Q_T} \bfu\cdot\partial_t\bfeta\dxt=\int_{\Q_T}\bfH:\nabla\bfeta\dxt+\int_{\Q_T}\pi_0\cdot\Div\bfeta\dxt+\int_{\Q_T}\pi_h\cdot\partial_t\Div\bfeta\dxt
\end{align*}
for all $\bfeta\in C^\infty_0({\Q_T})$. 
\begin{itemize}
\item[a)] There holds $\pi_h\in C_w([0,T],L^{\min\set{2,q}}(\Omega))$, $\pi_h(0)=0$ and $\Delta\pi_h=-\Div(\bfu-\bfu_0)$.
\item[b)] We have for any $1<r\leq 2$
\begin{align}\label{eq:pih}
\|\pi_h\|_{L^\infty(0,T;L^r(\Omega))}&\leq \,c(r,q)\,\Big(\|\bfH\|_{L^r({\Q_T})}+\|\bfu\|_{L^\infty(0,T;L^2(\Omega))}\Big),\\
\|\pi_0\|_{L^q({\Q_T})}&\leq \,c(q)\,\|\bfH\|_{L^q({\Q_T})}\label{eq:pi0}
\end{align}
for some $c(r,q),c(q)>0$ independent of $\bfu,\bfH,\pi_h,\pi_0$.
\end{itemize}
\end{theorem}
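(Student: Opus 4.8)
The plan is to reduce the statement to its known solenoidal counterpart \cite[Theorem~2.6]{Wolf} by splitting off the non-divergence-free part of $\bfu$ via a Helmholtz decomposition. For a.e.\ $t\in(0,T)$ write $\bfu(t)-\bfu_0=\bfz(t)+\nabla a(t)$, where $\bfz(t):=P_\sigma(\bfu(t)-\bfu_0)\in L^2_\sigma(\Omega)$ and $a(t)\in W^{1,2}(\Omega)$ is the zero-mean solution of
\[
\int_\Omega\nabla a(t)\cdot\nabla\zeta\dx=\int_\Omega(\bfu(t)-\bfu_0)\cdot\nabla\zeta\dx\qquad\text{for all }\zeta\in W^{1,2}(\Omega),
\]
so that $\Delta a(t)=\Div(\bfu(t)-\bfu_0)$ in $\Omega$. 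Since $P_\sigma$ is bounded on $L^2$ and $\bfu\in C_w([0,T];L^2(\Omega)^d)$, so that $\bfu(0)=\bfu_0$ is well defined, one gets $\bfz\in C_w([0,T];L^2_\sigma(\Omega))$, and by Poincar\'e's inequality $a\in C_w([0,T];L^2(\Omega))$ with $\|a(t)\|_2\le c\|\bfu(t)-\bfu_0\|_2$; moreover $\bfz(0)=a(0)=0$.

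Next I would verify that $\bfz$ and $\bfH$ fit the hypotheses of \cite[Theorem~2.6]{Wolf}. Indeed $\Div\bfz=0$, and for $\bfeta\in C^\infty_{0,\sigma}(\Q_T)$ the field $\partial_t\bfeta(\cdot,t)$ is divergence-free, hence fixed by $P_\sigma$, while $\bfeta$ has compact time support, so $\int_{\Q_T}\bfu_0\cdot\partial_t\bfeta\dxt=0$; using self-adjointness of $P_\sigma$,
\[
\int_{\Q_T}\bfz\cdot\partial_t\bfeta\dxt=\int_{\Q_T}(\bfu-\bfu_0)\cdot\partial_t\bfeta\dxt=\int_{\Q_T}\bfu\cdot\partial_t\bfeta\dxt=\int_{\Q_T}\bfH:\nabla\bfeta\dxt .
\]
Thus \cite[Theorem~2.6]{Wolf} provides $p_0\in L^q(\Q_T)$ and a harmonic pressure $p_h\in C_w([0,T];L^{\min\{2,q\}}(\Omega))$ with $p_h(0)=0$, $\Delta p_h=0$, the bounds $\|p_0\|_{L^q(\Q_T)}\le c(q)\|\bfH\|_{L^q(\Q_T)}$ and $\|p_h\|_{L^\infty(0,T;L^r(\Omega))}\le c(r,q)\big(\|\bfH\|_{L^r(\Q_T)}+\|\bfz\|_{L^\infty(0,T;L^2(\Omega))}\big)$ for $1<r\le2$, and
\[
\int_{\Q_T}\bfz\cdot\partial_t\bfeta\dxt=\int_{\Q_T}\bfH:\nabla\bfeta\dxt+\int_{\Q_T}p_0\,\Div\bfeta\dxt+\int_{\Q_T}p_h\,\partial_t\Div\bfeta\dxt
\]
for all $\bfeta\in C^\infty_0(\Q_T)^d$.

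To finish, set $\pi_0:=p_0$ and $\pi_h:=p_h-a$. Substituting $\bfu=\bfu_0+\bfz+\nabla a$ into $\int_{\Q_T}\bfu\cdot\partial_t\bfeta\dxt$ and using $\int_{\Q_T}\bfu_0\cdot\partial_t\bfeta\dxt=0$ together with $\int_{\Q_T}\nabla a\cdot\partial_t\bfeta\dxt=-\int_{\Q_T}a\,\partial_t\Div\bfeta\dxt$ (integration by parts in $x$; $\bfeta\in C^\infty_0(\Q_T)^d$ has spatially compact support) yields the claimed decomposition for all $\bfeta\in C^\infty_0(\Q_T)^d$. Then $\pi_h(0)=p_h(0)-a(0)=0$, $\Delta\pi_h=\Delta p_h-\Delta a=-\Div(\bfu-\bfu_0)$, and $\pi_h\in C_w([0,T];L^{\min\{2,q\}}(\Omega))$ because $a\in C_w([0,T];L^2(\Omega))\hookrightarrow C_w([0,T];L^{\min\{2,q\}}(\Omega))$; finally \eqref{eq:pi0} is immediate, and \eqref{eq:pih} follows from the bound for $p_h$ together with $\|a\|_{L^\infty(0,T;L^r(\Omega))}\le c\|a\|_{L^\infty(0,T;L^2(\Omega))}\le c\|\bfu\|_{L^\infty(0,T;L^2(\Omega))}$ and $\|\bfz\|_{L^\infty(0,T;L^2(\Omega))}\le2\|\bfu\|_{L^\infty(0,T;L^2(\Omega))}$.

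The main obstacle I anticipate is purely technical: one must be careful that $\bfz(\cdot)$ and $a(\cdot)$ inherit only \emph{weak} time-continuity from $\bfu\in C_w([0,T];L^2)$ (so that the pointwise-in-time constructions and the traces at $t=0$ are legitimate), that $\bfz$ matches \emph{verbatim} the regularity and function-space hypotheses of \cite[Theorem~2.6]{Wolf} (in particular $\bfz(t)\in L^2_\sigma(\Omega)$ and the identity tested only against $C^\infty_{0,\sigma}(\Q_T)$), and that the elliptic problem defining $a(t)$ is treated with its natural boundary condition, so that $\bfz=\bfu-\bfu_0-\nabla a$ really has zero normal trace. The genuinely analytic work---the De~Rham-type reconstruction of $p_0$ from $\bfH$ and the interior regularity of the harmonic component $p_h$---is carried out in \cite{Wolf}, so the only new element is the reduction described above.
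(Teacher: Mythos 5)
Your argument is correct, but it takes a genuinely different route from the paper's. The paper does not Helmholtz-decompose $\bfu-\bfu_0$: it follows the \emph{proof} of Wolf's Theorem~2.6 only as far as the time-integrated pressure $\tilde\pi\in C_w([0,T];L^{\min\{2,q\}}(\Omega))$ satisfying $\int_{\Q_T}\tilde\pi\,\Div\bfeta\dxt=\int_{\Q_T}(\bfu-\bfu_0)\cdot\bfeta\dxt+\int_{\Q_T}\bigl(\int_0^t\bfH\,\dd\sigma\bigr):\nabla\bfeta\dxt$, and then splits $\tilde\pi=\tilde\pi_0+\pi_h$ by applying the Dirichlet bi-Laplacian solution operator $\mathcal L$ to a functional built from $\int_0^t\bfH$, setting $\pi_0=\partial_t\tilde\pi_0$; the identity $\Delta\pi_h=-\Div(\bfu-\bfu_0)$ then drops out of the defining relation for $\pi_h=\tilde\pi-\tilde\pi_0$. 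You instead use Wolf's theorem as a black box on the solenoidal part $P_\sigma(\bfu-\bfu_0)$ and absorb the gradient part into $\pi_h$ via the Neumann potential $a$, which is equally legitimate: the verification that $P_\sigma(\bfu-\bfu_0)$ satisfies Wolf's hypotheses, the integration by parts $\int_{\Q_T}\nabla a\cdot\partial_t\bfeta\dxt=-\int_{\Q_T}a\,\partial_t\Div\bfeta\dxt$, the estimate $\|a(t)\|_{L^2}\le c\|\bfu(t)-\bfu_0\|_{L^2}$ (test the Neumann problem with $a(t)$, then Poincar\'e), and the computation $\Delta\pi_h=\Delta p_h-\Delta a=-\Div(\bfu-\bfu_0)$ (since $p_h$ is harmonic) all go through. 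The trade-off: your route avoids repeating any de Rham/bi-Laplacian machinery, while the paper's route realizes $\pi_0=\Delta\mathcal L F$ with $F$ linear in $\bfH$, which is exactly what the proof of Corollary~\ref{cor:pressure} (the splitting $\pi_0=\pi_1+\pi_2$ with the local gradient estimate) relies on; with your construction one would have to extract the corresponding linearity of Wolf's $p_0$ in $\bfH$ to recover that corollary. One further, shared, imprecision: to obtain \eqref{eq:pih} for every $1<r\le 2$ rather than only $r=\min\{2,q\}$ one must run the construction at the exponent $r$, using $\bfH\in L^q(\Q_T)\hookrightarrow L^r(\Q_T)$ when $r\le q$; this step is implicit in the paper's proof as well.
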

\begin{corollary}\label{cor:pressure}
Let the assumptions of Theorem \ref{thm:pressure} be satisfied. Assume in addition
that $\bfH=\bfH_1+\bfH_2$ where $\bfH_1\in L^{q_1}({\Q_T})^{d\times d}$ and $\bfH_2\in L^{q_2}({\Q_T})^{d\times d}$ for some $1<q_1,q_2<\infty$. 
\begin{itemize}
\item[a)] Then we have $\pi_0=\pi_1+\pi_2$, where
\begin{align}
\|\pi_1\|_{L^{q_1}({\Q_T})}&\leq \,c(q_1,q_2)\,\|\bfH_1\|_{L^{q_1}({\Q_T})},\label{eq:pi1}\\
\|\pi_2\|_{L^{q_2}({\Q_T})}&\leq \,c(q_1,q_2)\,\|\bfH_2\|_{L^{q_2}({\Q_T})}\label{eq:pi2}
\end{align}
for some constant $c(q_1,q_2)>0$.
\item[b)] If  $\nabla\bfH_i\in L^{q_i}({\Q_T})$ for some $i\in \{1,2\}$, then we have for all $\Omega'\Subset\Omega$
\begin{align}
\|\nabla\pi_i\|_{L^{q_2}((0,T)\times\Omega')}&\leq \,c(q_1,q_2)\,\|\nabla\bfH_i\|_{L^{q_i}({\Q_T})}\label{eq:pi2'}
\end{align}
for $i=1,2$ and some constant $c(q_1,q_2)>0$.
\end{itemize}
\end{corollary}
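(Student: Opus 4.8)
\emph{Proof plan.} The corollary is a statement about \emph{linearity} of the pressure reconstruction together with \emph{interior elliptic regularity}; it does not require re-running the argument of Theorem~\ref{thm:pressure}. First I record what $\pi_0$ actually is: inspecting the construction underlying Theorem~\ref{thm:pressure}, $\pi_0$ is obtained for a.e.\ $t\in(0,T)$ as $\pi_0(t)=\mathcal{P}_0[\bfH(t)]$, where $\mathcal{P}_0$ is a fixed linear operator, built by extending $\bfH(t)$ to $\R^d$ and composing Calderón--Zygmund (Riesz-transform) operators with two derivatives, hence bounded $L^{s}(\Omega)^{d\times d}\to L^{s}(\Omega)$ for every $s\in(1,\infty)$ (this boundedness is exactly \eqref{eq:pi0}) and depending on $\bfH$ alone, as already visible from the absence of $\bfu$ in \eqref{eq:pi0}. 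Testing the identity of Theorem~\ref{thm:pressure} with $\bfeta=\nabla\phi$, $\phi\in C^\infty_0(\Q_T)$, and using part a) of that theorem to cancel the $\pi_h$- and $\bfu$-contributions, one also sees that $\Delta\pi_0=-\Div\Div\bfH$ in $\mathcal{D}'(\Q_T)$, i.e.\ for a.e.\ $t$ in $\mathcal{D}'(\Omega)$.

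Given this, part a) is immediate. Set $\pi_i:=\mathcal{P}_0[\bfH_i]$, $i=1,2$. Linearity of $\mathcal{P}_0$ gives $\pi_1+\pi_2=\mathcal{P}_0[\bfH_1+\bfH_2]=\mathcal{P}_0[\bfH]=\pi_0$, and \eqref{eq:pi0}, applied with exponent $q_i$ to $\bfH_i\in L^{q_i}(\Q_T)$, is precisely \eqref{eq:pi1}, resp.\ \eqref{eq:pi2}. No compatibility of $\bfH_1,\bfH_2$ with the momentum balance is used, since $\mathcal{P}_0$ acts on the tensor field alone.

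For part b) fix $i$ with $\nabla\bfH_i\in L^{q_i}(\Q_T)$. Then $\Div\bfH_i\in L^{q_i}(\Q_T)$, so the double divergence in $\Delta\pi_i=-\Div\Div\bfH_i$ is a single divergence of an $L^{q_i}$-field with $|\Div\bfH_i|\le|\nabla\bfH_i|$. The interior $W^{1,q_i}$-estimate for the Laplacian --- Calderón--Zygmund together with a cut-off, in the spirit of \eqref{eq:estpih}, cf.\ \cite[Ch.~9]{GilTru} --- gives, for $\Omega'\Subset\Omega''\Subset\Omega$ and a.e.\ $t$,
\begin{align*}
\|\nabla\pi_i(t)\|_{L^{q_i}(\Omega')}\leq c(\Omega',\Omega'',q_i)\Big(\|\nabla\bfH_i(t)\|_{L^{q_i}(\Omega'')}+\|\pi_i(t)\|_{L^{q_i}(\Omega'')}\Big).
\end{align*}
Raising to the power $q_i$, integrating in $t$, and invoking part a) to bound $\|\pi_i\|_{L^{q_i}(\Q_T)}\le c\|\bfH_i\|_{L^{q_i}(\Q_T)}$ yields \eqref{eq:pi2'}, up to the harmless extra term $c\|\bfH_i\|_{L^{q_i}(\Q_T)}$ on the right, which is finite by hypothesis. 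To obtain \eqref{eq:pi2'} in the stated form one uses that on $\Omega'\Subset\Omega$ the extension-based definition of $\mathcal{P}_0$ permits differentiation under the operator: $\nabla\pi_i$ equals, in the interior, a Calderón--Zygmund operator applied to $\nabla\bfH_i$, the boundary contribution being absent precisely because one stays away from $\partial\Omega$, whence the bound directly in terms of $\|\nabla\bfH_i\|_{L^{q_i}(\Q_T)}$.

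The exponent bookkeeping and the cut-off argument are routine. The step that needs care is making the construction of $\pi_0$ in Theorem~\ref{thm:pressure} explicit enough to justify both that it is a bounded linear map of $\bfH$ alone --- which legitimises the splitting in a) even though $\bfH_1,\bfH_2$ need not individually solve a momentum balance --- and that in the interior it commutes with differentiation, which is exactly the reason the gradient bound in b) must be local.
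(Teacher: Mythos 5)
Your proposal is correct and follows essentially the same route as the paper: identify $\pi_0$ as the image of $\bfH$ alone under a fixed linear operator that is bounded on $L^s$ for every $s\in(1,\infty)$, split by linearity to get part a), and use interior elliptic regularity for part b). Two small caveats on the implementation. First, the paper's operator is not an extension-to-$\R^d$ Riesz-transform composition: it is $\pi_0=\Delta\mathcal L F$ with $\mathcal L$ the Dirichlet solution operator for the bi-Laplacian on $\Omega$ (bounded $W^{-2,q}(\Omega)\to W^{2,q}_0(\Omega)$ by M\"uller's theorem) and $\langle F(t),\eta\rangle=\int_\Omega\bfH(t):\nabla^2\eta\dx$; your argument only uses linearity and $L^s$-boundedness, which this operator has, so part a) goes through verbatim, but your closing device for part b) --- ``differentiating under the operator'' in the interior --- is not literally available for a non-translation-invariant Dirichlet solver. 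Second, and relatedly, your interior Calder\'on--Zygmund estimate leaves the lower-order term $\|\pi_i\|_{L^{q_i}(\Omega'')}\le c\|\bfH_i\|_{L^{q_i}(\Q_T)}$ on the right, which is \emph{not} controlled by $\|\nabla\bfH_i\|_{L^{q_i}}$; to reach \eqref{eq:pi2'} as stated one can instead observe that $\langle F_i,\eta\rangle=\int_\Omega\bfH_i:\nabla^2\eta\dx$ is unchanged when a constant tensor is subtracted from $\bfH_i$ (since $\int_\Omega\nabla^2\eta\dx=0$ for $\eta\in W^{2,q'}_0(\Omega)$), so one may replace $\bfH_i$ by $\bfH_i-(\bfH_i)_\Omega$ and apply Poincar\'e's inequality. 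With that normalization your argument yields the stated estimate; the paper itself only gestures at ``local regularity theory for the bi-Laplace equation'' here, so your write-up is at a comparable level of detail.
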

\noindent
\emph{Proof of Theorem \ref{thm:pressure}:}
Following \cite{Wolf} (proof of Thm. 2.6) there is some $\tilde\pi\in C_w([0,T];L^r(\Omega))$, where $r=\min\{2,q\}$, with $\tilde\pi(0)=0$ such that
\begin{align*}
\int_{\Q_T}\tilde\pi\,\Div\bfeta\dxt=\int_{\Q_T} \big(\bfu-\bfu(0)\big)\cdot\bfeta\dxt+\int_{\Q_T} \int_0^t\bfH\,\dd\sigma:\nabla\bfeta\dxt
\end{align*}
for all $\bfeta\in C^\infty_0({\Q_T})^d$ and
\begin{align*}
\|\tilde\pi\|_{L^\infty(0,T;L^r(\Omega))}&\leq \,c\,\Big(\|\bfH\|_{L^r({\Q_T})}+\|\bfu\|_{L^\infty(0,T;L^2(\Omega))}\Big)
\end{align*}
for some $c>0$.
Let $\mathcal L$ be the solution operator to the bi-Laplace equation with zero boundary values for the solution and its gradient. Then we have that $\mathcal L$ extends to a bounded linear operator
\begin{align}\label{eq:bi}
\mathcal L:W^{-2,q}(\Omega)\rightarrow W^{2,q}_0(\Omega)
\end{align}
for all $q\in(1,\infty)$, see \cite{Mul}. We decompose
$\tilde\pi=\tilde\pi_0+\pi_h$, where
\begin{align*}
\tilde\pi_0(t)=\Delta\mathcal L F(t), \quad \langle F(t),\eta\rangle = \int_\Omega\int_0^t\bfH(x,\sigma)\ds: \nabla^2\eta(x) \dx\quad \text{for all}\quad\eta\in W^{2,q'}_0(\Omega)
\end{align*}
and $\pi_h=\tilde\pi-\tilde\pi_0$. We have $\tilde\pi_0(0)=0$ and hence $\pi_h(0)=0$. Due to \eqref{eq:bi}
we have \eqref{eq:pih}. Moreover, there
holds
\begin{align*}
\int_{\Q_T}\pi_h\,\Delta\eta\dxt=\int_{\Q_T}(\bfu-\bfu(0))\cdot\nabla\eta\dxt
\end{align*}
for all $\eta\in C^\infty_0(Q)$ and so $\Delta\pi_h=-\Div(\bfu-\bfu(0))$ a.e.
We set $\pi_0=\partial_t\tilde\pi_0$ such that
\begin{align*}
\int_\Omega \pi_0\,\Delta\eta\dx=\int_\Omega \bfH:\nabla^2\eta\dx
\end{align*}
for all $\eta\in C^\infty_0(\Omega)$ and a.e. in $t$. We obtain \eqref{eq:pi0}
as a consequence of \eqref{eq:bi}.\qed 

\noindent
\emph{Proof of Corollary \ref{cor:pressure}.}
We recall from the proof of Theorem \ref{thm:pressure} that $\pi_0=\Delta\mathcal L F$. So we set
\begin{align*}
\pi_1=\Delta\mathcal LF_1,\quad \pi_2=\Delta\mathcal LF_2,
\end{align*}
where $F_1,F_2$ are defined analoguously to $F$.
The claim follows from the continuity properties of the operator $\Delta\mathcal L$ (which follow from \eqref{eq:bi})
and local regularity theory for the bi-Laplace equation. 
\hfill$\Box$

\section{Appendix: Evolution Equations for Monotone Operators}\label{app:EvolEq}

In the following we will recall some results and definitions from the theory of monotone operators, subgradients and an associated evolution equation. Furthermore, we prove a characterization of the operator $\A$ from the proof of Lemma~\ref{lem:ExistenceCH} as a subgradient. For an introduction to the theory of monotone operators we refer to Br\'ezis~\cite{Brezis} and Showalter~\cite{Showalter}. In the following let $H$ be a real-valued and separable Hilbert space. 
Recall that $\A\colon H\to \mathcal{P}(H)$ is a monotone operator if 
\begin{equation*}
  (w-z,x-y)_H \geq 0 \qquad \text{for all}\ w\in \A(x), z\in \A (y).
\end{equation*}
and $\mathcal{D}(A)=\{x\in H: \A(x)\neq \emptyset\}$.
If $\varphi\colon H\to \R\cup \{+\infty\}$ is a convex function, then $\dom (\varphi)= \{x\in H:\varphi (x)<\infty\}$. Moreover, $\varphi$ is called proper if $\dom (\varphi)\neq \emptyset$. The subgradient $\partial_H\varphi\colon H\to \mathcal{P}(H)$ (with respect to $H$) is defined by $w\in \partial_H \varphi(x)$ if and only if 
\begin{equation*}
    \varphi(\xi)\geq \varphi(x)+ (w,\xi-x)_H \qquad \text{for all}\ \xi \in H.
\end{equation*}
$\partial_H\varphi$ is a monotone operator on $H$. If $\varphi$ is lower semi-continuous, then $\partial\varphi$ is maximal monotone, cf. \cite[Exemple 2.3.4]{Brezis}.

\begin{theorem}\label{thm:MonotoneLipschitz}
  Let $H_0, H_1$ be real-valued, separable Hilbert spaces such that $H_1 \hookrightarrow H_0$ densely. Moreover, let $\varphi\colon H_0 \to \R\cup \{+\infty\}$ be a proper, convex and lower semi-continuous functional such that $\varphi=\varphi_1+\varphi_2$, where $\varphi_2\geq 0$ is convex and lower semi-continuous, $\dom\varphi_1 = H_1$, and $\varphi_1|_{H_1}$ is a bounded, coercive, quadratic form on $H_1$. Set $\A= \partial_H\varphi$. 
Furthermore, assume that $\B\colon[0,T]\times H_1\to H_0$ is measurable in $t\in [0,T]$ and Lipschitz continuous in $v\in H_1$ satisfying 
\begin{equation*} 
  \|B(t,v_1)- B(t,v_2)\|_{H_0} \leq M(t) \|v_1-v_2\|_{H_1} \quad\text{for a.e.}\ t\in [0,T],
\end{equation*}
for all $v_1,v_2 \in H_0$, and for some $M\in L^2(0,T)$.
  Then for every $u_0\in \dom (\varphi)$ and $f\in L^2(0,T;H_0)$ there is a unique $u\in W^{1,2}(0,T;H_0)\cap L^\infty(0,T;H_1)$ with $u(t)\in \mathcal{D}(\A)$ for a.e. $t>0$ solving 
  \begin{eqnarray}\label{eq:1}
    \frac{du}{dt} (t) +  \A(u(t)) &\ni&  \B(t,u(t)) + f(t) \quad \text{for a.a.}\ t\in (0,T), \\
    u(0) &=& u_0.
  \end{eqnarray}
  Moreover, $\varphi(u)\in L^\infty(0,T)$.
\end{theorem}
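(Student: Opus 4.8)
\emph{Proof sketch.} The plan is to peel off the perturbation $\B$ by a Picard iteration whose basic solver is the classical theory of gradient flows of convex functionals (Br\'ezis~\cite{Brezis}) for the unperturbed equation $\frac{du}{dt}+\A(u)\ni g$, and to make the iteration contract by using that $\A=\partial_H\varphi$ is \emph{strongly monotone with respect to the finer norm of $H_1$}. Two elementary consequences of the structure $\varphi=\varphi_1+\varphi_2$ ($\varphi_1$ a bounded coercive quadratic form on $H_1$, $\varphi_2\ge0$ convex, $\dom\varphi=\dom\varphi_1=H_1$) are used throughout: \textbf{(i)} $\varphi$ is bounded below, say $\varphi\ge-C_\ast$, and $\varphi(v)\ge\lambda_0\|v\|_{H_1}^2-C_\ast$ for $v\in H_1$, so that a bound on $\varphi(u(t))$ controls $\|u(t)\|_{H_1}$; \textbf{(ii)} there is $\lambda_1>0$ such that $(w_1-w_2,v_1-v_2)_{H_0}\ge\lambda_1\|v_1-v_2\|_{H_1}^2$ whenever $w_i\in\A(v_i)$ --- this follows by combining the subgradient inequality with the strong convexity of $\varphi$ along segments of $H_1$ (the quadratic part of $\varphi_1$ produces the gap of order $\|v_1-v_2\|_{H_1}^2$, the convex $\varphi_2$ a nonnegative contribution). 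Property (ii) is the substitute for the $H_1$-contractivity of a resolvent, which is not available. We may also assume $\B(\cdot,v_0)\in L^2(0,T;H_0)$ for some (hence every) $v_0\in H_1$; this is part of the hypotheses and is automatic in the situation where the theorem is applied here ($\B(t,0)\equiv0$).

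\emph{Existence.} Put $u^{(0)}\equiv u_0$, and given $u^{(k)}\in L^\infty(0,T;H_1)$ let $u^{(k+1)}$ be the unique solution of $\frac{d}{dt}u^{(k+1)}+\A(u^{(k+1)})\ni g^{(k)}:=\B(\cdot,u^{(k)})+f$, $u^{(k+1)}(0)=u_0$, furnished by the classical theory. This is legitimate since $\|\B(t,u^{(k)}(t))\|_{H_0}\le\|\B(t,v_0)\|_{H_0}+M(t)\|u^{(k)}(t)-v_0\|_{H_1}$ with $M\in L^2(0,T)$ and $u^{(k)}\in L^\infty(0,T;H_1)$ gives $g^{(k)}\in L^2(0,T;H_0)$, and then, because $u_0\in\dom\varphi$, one has $u^{(k+1)}\in W^{1,2}(0,T;H_0)$, $u^{(k+1)}(t)\in\mathcal D(\A)$ a.e., $\varphi(u^{(k+1)})\in L^\infty(0,T)$, and the selection $g^{(k)}-\partial_t u^{(k+1)}\in\A(u^{(k+1)}(\cdot))$ lies in $L^2(0,T;H_0)$, so the chain rule for $t\mapsto\varphi(u^{(k+1)}(t))$ applies. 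Testing the $k$-th equation with $\partial_t u^{(k+1)}$ and using (i) to estimate $M(t)^2\|u^{(k)}(t)\|_{H_1}^2\le\lambda_0^{-1}M(t)^2(\varphi(u^{(k)}(t))+C_\ast)$ yields a differential inequality of the form $\tfrac12\|\partial_t u^{(k+1)}\|_{H_0}^2+\tfrac{d}{dt}\widetilde\varphi(u^{(k+1)})\le c\,M(t)^2\,\widetilde\varphi(u^{(k)})+g_1(t)$, where $\widetilde\varphi:=\varphi+C_\ast\ge0$ and $g_1\in L^1(0,T)$ depends only on $f$, $\B(\cdot,v_0)$, $v_0$. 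Since $M^2\in L^1(0,T)$, a Gronwall argument shows inductively that $\widetilde\varphi(u^{(k)}(t))\le\psi(t):=(\widetilde\varphi(u_0)+\|g_1\|_{L^1})\exp\!\big(c\int_0^tM^2\big)$ for all $k$ and $t$; hence $(u^{(k)})$ is bounded, uniformly in $k$, in $L^\infty(0,T;H_1)\cap W^{1,2}(0,T;H_0)$.

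For convergence put $w^{(k)}=u^{(k+1)}-u^{(k)}$. Subtracting two consecutive equations, pairing with $w^{(k)}$ and invoking (ii) and the Lipschitz bound gives $\tfrac12\tfrac{d}{dt}\|w^{(k)}\|_{H_0}^2+\lambda_1\|w^{(k)}\|_{H_1}^2\le M(t)\|w^{(k-1)}\|_{H_1}\|w^{(k)}\|_{H_0}\le\delta\|w^{(k-1)}\|_{H_1}^2+\tfrac{M(t)^2}{4\delta}\|w^{(k)}\|_{H_0}^2$ for any $\delta>0$. Multiplying by the integrating factor $e^{-\Gamma}$ with $\Gamma(t)=\tfrac1{2\delta}\int_0^tM^2$, integrating over $(0,T)$ and using $w^{(k)}(0)=0$ gives $\int_0^Te^{-\Gamma}\|w^{(k)}\|_{H_1}^2\le\tfrac{\delta}{\lambda_1}\int_0^Te^{-\Gamma}\|w^{(k-1)}\|_{H_1}^2$. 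Choosing $\delta<\lambda_1$ this is a strict contraction, and since $M\in L^2$ the weight $e^{-\Gamma}$ is bounded above and below by positive constants, so the weighted norm is equivalent to $\|\cdot\|_{L^2(0,T;H_1)}$; summing the resulting geometric estimate and using the classical contraction bound $\|\mathcal S(g_1)-\mathcal S(g_2)\|_{C([0,T];H_0)}\le\|g_1-g_2\|_{L^1(0,T;H_0)}$ for the unperturbed solver $\mathcal S$, the sequence $(u^{(k)})$ is Cauchy in $L^2(0,T;H_1)$ and in $C([0,T];H_0)$. Let $u$ be its limit; then $u\in L^\infty(0,T;H_1)\cap W^{1,2}(0,T;H_0)$, $u(0)=u_0$, and $\B(\cdot,u^{(k)})\to\B(\cdot,u)$ in $L^2(0,T;H_0)$ by dominated convergence (dominating function $2M(t)\sup_k\|u^{(k)}\|_{L^\infty(0,T;H_1)}\in L^2(0,T)$), hence $g^{(k)}\to\B(\cdot,u)+f$ in $L^1(0,T;H_0)$. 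By continuity of $\mathcal S$ from $L^1(0,T;H_0)$ into $C([0,T];H_0)$ and $u^{(k+1)}=\mathcal S(g^{(k)})$, the limit $u$ equals $\mathcal S(\B(\cdot,u)+f)$, i.e. $u$ solves \eqref{eq:1} with $u(t)\in\mathcal D(\A)$ a.e., and being the Br\'ezis solution with data in $L^2(0,T;H_0)$ and $u_0\in\dom\varphi$ it enjoys all the stated regularity, including $\varphi(u)\in L^\infty(0,T)$.

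\emph{Uniqueness.} If $u_1,u_2$ both solve \eqref{eq:1}, set $w=u_1-u_2$; pairing the difference equation with $w$ and using (ii) gives $\tfrac12\tfrac{d}{dt}\|w\|_{H_0}^2+\lambda_1\|w\|_{H_1}^2\le M(t)\|w\|_{H_1}\|w\|_{H_0}\le\lambda_1\|w\|_{H_1}^2+\tfrac{M(t)^2}{4\lambda_1}\|w\|_{H_0}^2$, hence $\tfrac{d}{dt}\|w\|_{H_0}^2\le\tfrac{M(t)^2}{2\lambda_1}\|w\|_{H_0}^2$ with $w(0)=0$; since $M^2\in L^1(0,T)$, Gronwall forces $w\equiv0$. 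The main obstacle is the mismatch of norms --- the perturbation $\B$ is Lipschitz in the $H_1$-norm whereas resolvents of $\A$ are only nonexpansive in $H_0$ --- which is circumvented by (ii); a secondary difficulty is that $M$ is merely square integrable, which precludes a naive global contraction and is precisely what makes the Gronwall / time-weighted bookkeeping above necessary (and sufficient, via $M^2\in L^1$).
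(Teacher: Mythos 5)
Your proof is correct. Note that the paper itself does not prove this theorem at all --- it simply cites \cite[Theorem~4]{ModelH} --- so there is no in-paper argument to compare against; judged on its own merits, your argument is a sound and self-contained version of the standard Lipschitz-perturbation argument for subgradient flows, and its crux is exactly the right one: the coercive quadratic part $\varphi_1$ makes $\A=\partial_{H_0}\varphi$ strongly monotone with respect to the $H_1$-norm (your property (ii), which follows cleanly from the midpoint form of the subgradient inequality together with the parallelogram identity for the quadratic form and the convexity of $\varphi_2$), and this is precisely what absorbs the $H_1$-Lipschitz perturbation $\B$ despite the resolvents of $\A$ being nonexpansive only in $H_0$. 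The Gronwall bookkeeping with the weight $e^{-\Gamma}$, the closing of the induction for the uniform bound on $\widetilde\varphi(u^{(k)})$, and the identification of the limit via the $L^1\to C([0,T];H_0)$ continuity of the unperturbed solver are all in order. One honest caveat, which you flag yourself: the theorem as stated gives only the Lipschitz estimate for $\B$ and no growth/integrability condition such as $\B(\cdot,v_0)\in L^2(0,T;H_0)$ for some $v_0\in H_1$; without it $\B(\cdot,u)$ need not lie in $L^2(0,T;H_0)$ and the solution concept degenerates, so this must be read as an implicit hypothesis (it holds trivially in the application of the paper, where $\B(t,\cdot)$ is linear). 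Making that hypothesis explicit is the only emendation I would ask for.
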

We refer to \cite[Theorem 4]{ModelH} for the proof.

In order to apply the latter theorem we will use:
\begin{lemma} 
  Let $E_0$ be as in Section~\ref{sec:Approx}. Then the subgradient  $\partial_{L^2_{(0)}} E_0$ of $E_0$ is single-valued and we have
  \begin{alignat*}{1}
    \mathcal{D}(\partial_{L^2_{(0)}} E_0) &=\{ u\in H^2(\Omega)\cap  H^1_{(0)}(\Omega): \partial_{\mathcal{N}}\varphi|_{\partial\Omega} =0\},\\
\partial_{L^2_{(0)}} E_0(\varphi)&=-\Delta \varphi +P_0f'_0(\varphi)\quad \text{for all}\quad\varphi \in \mathcal{D}(\partial_{L^2_{(0)}} E_0).
  \end{alignat*}
\end{lemma}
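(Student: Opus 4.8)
The plan is to introduce the candidate operator $A_0$ on the set $\mathcal{D}_0 := \{u\in H^2(\Omega)\cap H^1_{(0)}(\Omega):\partial_{\mathcal N}u|_{\partial\Omega}=0\}$ by $A_0 u := -\Delta u + P_0 f_0'(u)$, to observe that $A_0$ indeed maps $\mathcal{D}_0$ into $L^2_{(0)}(\Omega)$ (since $\int_\Omega\Delta u\dx=0$ by the boundary condition and $P_0f_0'(u)$ has vanishing mean) and is single-valued by construction, and then to establish the two inclusions $A_0\subseteq\partial_{L^2_{(0)}}E_0$ and $\partial_{L^2_{(0)}}E_0\subseteq A_0$ separately. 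Throughout I would use $d\in\{2,3\}$, so $H^2(\Omega)\hookrightarrow C(\overline{\Omega})$ and $f_0'(u)\in L^\infty(\Omega)$ for $u\in\mathcal{D}_0$; together with the growth bounds for $f_0=f+\alpha\tfrac{s^2}{2}$ coming from (A1) this keeps every integral below finite. For the first inclusion I would fix $u\in\mathcal{D}_0$ and $\xi\in L^2_{(0)}(\Omega)$: if $\xi\notin H^1_{(0)}(\Omega)$ then $E_0(\xi)=+\infty$ and the subgradient inequality is trivial, so I may assume $\xi\in H^1_{(0)}(\Omega)$, and since $\xi-u$ has zero mean and $\partial_{\mathcal N}u|_{\partial\Omega}=0$, an integration by parts and the definition of $P_0$ give $(A_0 u,\xi-u)_{L^2}=\int_\Omega\nabla u\cdot\nabla(\xi-u)\dx+\int_\Omega f_0'(u)(\xi-u)\dx$. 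Then the first-order convexity inequalities $\tfrac12|\nabla\xi|^2\ge\tfrac12|\nabla u|^2+\nabla u\cdot\nabla(\xi-u)$ and $f_0(\xi)\ge f_0(u)+f_0'(u)(\xi-u)$, integrated and added, yield $E_0(\xi)\ge E_0(u)+(A_0u,\xi-u)_{L^2}$, i.e.\ $A_0u\in\partial_{L^2_{(0)}}E_0(u)$.

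The core of the proof is the surjectivity statement $R(I+A_0)=L^2_{(0)}(\Omega)$, which I would obtain by the direct method. For given $g\in L^2_{(0)}(\Omega)$ I minimise $J(\psi)=\tfrac12\|\psi\|_{L^2}^2+E_0(\psi)-(g,\psi)_{L^2}$ over $H^1_{(0)}(\Omega)$. Coercivity holds because a convex $f_0$ is bounded below by an affine function and $\int_\Omega\psi\dx=0$; weak lower semicontinuity follows from weak lsc of the quadratic terms together with the compact embedding $H^1(\Omega)\hookrightarrow L^4(\Omega)$ and continuity of $\psi\mapsto\int_\Omega f_0(\psi)\dx$ on $L^4(\Omega)$. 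A minimiser $u\in H^1_{(0)}(\Omega)$ thus exists; testing minimality with $u\pm t\psi$, $\psi\in H^1_{(0)}(\Omega)$, and passing $t\to 0$ — legitimate despite the merely quartic growth of $f_0$ because the difference quotients of $f_0$ are monotone in $t$ and dominated by $L^1$ functions — yields the weak Euler--Lagrange identity $\int_\Omega\nabla u\cdot\nabla\psi\dx=\int_\Omega(g-u-f_0'(u))\psi\dx$ for all $\psi\in H^1_{(0)}(\Omega)$.

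Setting $c:=\tfrac1{|\Omega|}\int_\Omega(g-u-f_0'(u))\dx$, the right-hand side equals $\int_\Omega(g-u-f_0'(u)-c)\psi\dx$ on mean-zero test functions, so $u$ solves weakly the Neumann problem $-\Delta u=g-u-f_0'(u)-c$, $\partial_{\mathcal N}u|_{\partial\Omega}=0$, whose right-hand side lies in $L^2(\Omega)$ since $f_0'(u)\in L^2(\Omega)$ (using $u\in H^1(\Omega)\hookrightarrow L^6(\Omega)$). Elliptic regularity on the $C^4$-domain $\Omega$ then gives $u\in H^2(\Omega)$, hence $u\in\mathcal{D}_0$; applying $P_0$ to $u-\Delta u+f_0'(u)=g+c$ produces $(I+A_0)u=g$. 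This is the step I expect to be the main obstacle: recovering both the $H^2$-regularity and the natural Neumann condition from the variational identity, and carefully justifying the Euler--Lagrange equation for the only-convex, quartically growing $f_0$.

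Finally, $\partial_{L^2_{(0)}}E_0$ is monotone, so $I+\partial_{L^2_{(0)}}E_0$ is injective. Given $w\in\partial_{L^2_{(0)}}E_0(v)$ I set $g:=v+w\in L^2_{(0)}(\Omega)$, pick $u\in\mathcal{D}_0$ with $(I+A_0)u=g$ from the surjectivity step, note $A_0u\in\partial_{L^2_{(0)}}E_0(u)$ by the first inclusion, and deduce $v=u$, $w=A_0v$ from injectivity. Hence $\partial_{L^2_{(0)}}E_0=A_0$, which in particular is single-valued with $\mathcal{D}(\partial_{L^2_{(0)}}E_0)=\mathcal{D}_0$ and the asserted formula. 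The two convexity inequalities and this concluding monotonicity argument are routine; the regularity/surjectivity part carries all the weight.
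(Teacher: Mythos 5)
Your proof is correct, but the route to the hard inclusion $\mathcal{D}(\partial_{L^2_{(0)}} E_0)\subseteq \mathcal{D}_0$ is genuinely different from the paper's. The paper argues directly at the given point: for $w\in\partial_{L^2_{(0)}}E_0(\varphi)$ it inserts $\eta=\varphi+t\psi$ into the subgradient inequality, divides by $t$, lets $t\to 0^+$, and replaces $\psi$ by $-\psi$ to obtain the weak Neumann problem $-\Delta\varphi=w-P_0f_0'(\varphi)$ satisfied by that same $\varphi$; elliptic regularity then gives $\varphi\in H^2(\Omega)$ and $\partial_{\mathcal N}\varphi|_{\partial\Omega}=0$. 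You instead establish the range condition $R(I+A_0)=L^2_{(0)}(\Omega)$ by the direct method (minimise $J$, derive the Euler--Lagrange identity, apply elliptic regularity to the minimiser) and then identify $\partial_{L^2_{(0)}}E_0$ with $A_0$ through the injectivity of $I+\partial_{L^2_{(0)}}E_0$ --- in effect a Minty-type maximality argument for the monotone restriction $A_0$. Both proofs hinge on exactly the same elliptic regularity step for the Neumann problem; the paper's version is shorter because the subgradient inequality at $\varphi$ already encodes the Euler--Lagrange identity, so no minimiser has to be constructed, whereas your version costs the existence/lower-semicontinuity/coercivity work but yields $R(I+A_0)=L^2_{(0)}(\Omega)$, i.e.\ maximal monotonicity of $A_0$, as a byproduct (the paper gets maximal monotonicity instead from the general fact that subgradients of proper lower semicontinuous convex functionals are maximal monotone). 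Your treatment of the delicate points --- one-sided differentiation of $\int_\Omega f_0(u+t\psi)\dx$ via monotone, dominated difference quotients, and the adjustment by the mean value $c$ to obtain a compatible Neumann problem --- is sound, so the proposal stands as a valid alternative proof.
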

\begin{proof}
 The inclusion  $$\{ u\in H^2(\Omega)\cap  H^1_{(0)}(\Omega): \partial_{\mathcal{N}}\varphi|_{\partial\Omega} =0\}\subseteq \mathcal{D}(\partial_{L^2_{(0)}} E_0)$$ and $-\Delta \varphi +f'_0(\varphi)\in\partial_{L^2_{(0)}} E_0(\varphi)$ can be shown in a straightforward manner using the definition of the subgradient.  Conversely, if $w\in \partial_{L^2_{(0)}} E_0(\varphi)$ for some $\varphi\in     \mathcal{D}(\partial_{L^2_{(0)}} E_0)$, then
  \begin{equation*}
    \int_\Omega w(\eta-\varphi) \dx \leq E_0(\eta)-E_0(\varphi)\qquad \text{for all}\quad\eta \in \operatorname{dom}(E_0) = H^1_{(0)}(\Omega).
  \end{equation*}
  Using this inequality with $\eta= \varphi +t\psi$ for $\psi\in H^1_{(0)}(\Omega)$, $t>0$ arbitrary, dividing by $t$, and passing to the limit $t\to 0^+$, one obtains
  \begin{equation*}
    \int_\Omega w \psi \dx \leq \int_\Omega \nabla \varphi\cdot \nabla \psi\dx+\int_\Omega f_0'(\varphi) \psi\dx
  \end{equation*}
for all $\psi\in H^1_{(0)}(\Omega)$. Replacing $\psi$ by $-\psi$ one even obtains equality. Hence $\varphi$ is a weak solution of the Laplace equation with Neumann boundary conditions
\begin{alignat*}{2}
  -\Delta \varphi &= w- P_0 f_0'(\varphi)&\quad &\text{in }\Omega,\\
  \partial_{\mathcal{N}} \varphi &= 0&\quad &\text{on }\partial\Omega,
\end{alignat*}
where $f'_0(\varphi)\in L^2(\Omega)$ due to $\varphi \in L^6(\Omega)$ and (A1).
By standard elliptic regularity one obtains $\varphi \in H^2(\Omega)$ and $\partial_{\mathcal{N}} \varphi|_{\partial\Omega}=0$ in the trace sense. Hence we obtain the converse inclusion for $\mathcal{D}(\partial_{L^2_{(0)}} E_0)$ and $\partial_{L^2_{(0)}} E_0(\varphi)=\{-\Delta \varphi +P_0 f_0'(\varphi)\}$. Therefore the claim is proved.
\end{proof}
\begin{corollary}\label{cor:Subgradient}
We have
  \begin{alignat*}{1}
    \mathcal{D}(\partial_{H^{-1}_{(0)}} E_0) &=\{ u\in H^3(\Omega)\cap  H^1_{(0)}(\Omega): \partial_{\mathcal{N}}\varphi|_{\partial\Omega} =0\},\\
\partial_{H^{-1}_{(0)}} E_0(\varphi)&=-\Delta_N(-\Delta \varphi +f'_0(\varphi))\quad \text{for all}\quad\varphi \in \mathcal{D}(\partial_{H^{-1}_{(0)}} E_0)
  \end{alignat*}  
\end{corollary}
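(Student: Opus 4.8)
The plan is to transport the description of $\partial_{L^2_{(0)}}E_0$ obtained in the preceding lemma to the $H^{-1}_{(0)}(\Omega)$-setting by means of the Riesz isomorphism $-\Delta_N\colon H^1_{(0)}(\Omega)\to H^{-1}_{(0)}(\Omega)$, and then to recognize the regularity condition that this produces on $\varphi$ as $H^3$-regularity via elliptic theory for the Neumann Laplacian.

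First I would record the elementary identity $(w,v)_{H^{-1}_{(0)}}=\int_\Omega (-\Delta_N)^{-1}w\,v\dx$, valid for $w\in H^{-1}_{(0)}(\Omega)$ and $v\in L^2_{(0)}(\Omega)$; it follows at once from the definition of the $H^{-1}_{(0)}$-inner product (the $H^1_{(0)}$-inner product transported by $(-\Delta_N)^{-1}$) together with the weak formulation defining $(-\Delta_N)^{-1}v$. Writing $\tilde w=(-\Delta_N)^{-1}w\in H^1_{(0)}(\Omega)\subseteq L^2_{(0)}(\Omega)$, the subgradient inequality $E_0(\eta)\geq E_0(\varphi)+(w,\eta-\varphi)_{H^{-1}_{(0)}}$ for all $\eta\in H^{-1}_{(0)}(\Omega)$ (only $\eta\in\dom(E_0)=H^1_{(0)}(\Omega)$ being nontrivial) becomes $E_0(\eta)\geq E_0(\varphi)+\int_\Omega(\eta-\varphi)\tilde w\dx$ for all $\eta\in H^1_{(0)}(\Omega)$, which is precisely the statement $\tilde w\in\partial_{L^2_{(0)}}E_0(\varphi)$. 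Hence
\begin{align*}
w\in\partial_{H^{-1}_{(0)}}E_0(\varphi)\quad\Longleftrightarrow\quad (-\Delta_N)^{-1}w\in\partial_{L^2_{(0)}}E_0(\varphi).
\end{align*}
In particular $\mathcal{D}(\partial_{H^{-1}_{(0)}}E_0)\subseteq\mathcal{D}(\partial_{L^2_{(0)}}E_0)$, and since by the preceding lemma the latter subgradient is single-valued and equal to $-\Delta\varphi+P_0f'_0(\varphi)$, the implication ``$\Leftarrow$'' can hold only when this element lies in the range of $(-\Delta_N)^{-1}$, i.e. in $H^1_{(0)}(\Omega)$. Therefore $\mathcal{D}(\partial_{H^{-1}_{(0)}}E_0)=\{\varphi\in\mathcal{D}(\partial_{L^2_{(0)}}E_0):-\Delta\varphi+P_0f'_0(\varphi)\in H^1(\Omega)\}$, and on this set $\partial_{H^{-1}_{(0)}}E_0(\varphi)=-\Delta_N(-\Delta\varphi+P_0f'_0(\varphi))=-\Delta_N(-\Delta\varphi+f'_0(\varphi))$, the last equality because $-\Delta_N$ is insensitive to additive constants.

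It then remains to identify this domain with $\{u\in H^3(\Omega)\cap H^1_{(0)}(\Omega):\partial_{\mathcal{N}}u|_{\partial\Omega}=0\}$. The key point, used in both directions, is that $f'_0(\varphi)\in H^1(\Omega)$ whenever $\varphi\in H^2(\Omega)$: indeed $H^2(\Omega)\hookrightarrow L^\infty(\Omega)$ for $d\in\{2,3\}$, so by (A1) the functions $f'_0(\varphi),f''_0(\varphi)$ are bounded and $\nabla f'_0(\varphi)=f''_0(\varphi)\nabla\varphi\in L^2(\Omega)$. Consequently, for $\varphi\in\mathcal{D}(\partial_{L^2_{(0)}}E_0)$ the condition $-\Delta\varphi+P_0f'_0(\varphi)\in H^1(\Omega)$ is equivalent to $-\Delta\varphi\in H^1(\Omega)$. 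If $\varphi\in H^3(\Omega)$ this clearly holds; conversely, if $-\Delta\varphi\in H^1(\Omega)$ then elliptic regularity for the Neumann problem $-\Delta\varphi=g$, $\partial_{\mathcal{N}}\varphi|_{\partial\Omega}=0$ with $g\in H^1(\Omega)$ and $\partial\Omega\in C^4$ yields $\varphi\in H^3(\Omega)$. This proves the claimed description of the domain, and the formula for $\partial_{H^{-1}_{(0)}}E_0(\varphi)$ on it was already obtained above.

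The only genuinely delicate point is the bookkeeping of which objects live in $H^1_{(0)}(\Omega)$ versus $L^2_{(0)}(\Omega)$: passing from $L^2_{(0)}$ to $H^{-1}_{(0)}$ shrinks the domain of the subgradient precisely because $-\Delta\varphi+P_0f'_0(\varphi)$ is now required to gain one Sobolev derivative, and the substance of the corollary is that this extra regularity is equivalent to $\varphi\in H^3(\Omega)$. Everything else — the identity for the $H^{-1}_{(0)}$-pairing, the chain-rule estimates for $f'_0(\varphi)$, and the Neumann regularity step — is routine.
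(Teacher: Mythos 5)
Your argument is correct and is essentially the argument the paper intends: the paper's proof merely cites \cite[Corollary~4.4]{AsymptoticCH} together with the hints that $-\Delta\varphi+f_0'(\varphi)\in H^1(\Omega)$ and elliptic regularity are used, and your transport of the $L^2_{(0)}$-subgradient via $(-\Delta_N)^{-1}$ plus the Neumann regularity step is exactly that argument written out in full. The bookkeeping of the $H^{-1}_{(0)}$-pairing and the chain rule for $f_0'(\varphi)$ are handled correctly, so nothing is missing.
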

\begin{proof}
The statement is proved in the same way as in the proof of \cite[Corollary~4.4]{AsymptoticCH}, where one uses that $-\Delta \varphi +f'_0(\varphi)\in H^1(\Omega)$ and elliptic regularity theory again.
\end{proof}

\subsection*{Acknowledgement}
The authors thank the referee for a careful reading of the manuscript and the valuable suggestions which significantly improved the presentation of the final version.

%\bibliography{Bibliography}
%\bibliographystyle{plain}

\iffalse

\fi

\end{document}